\pdfoutput=1 		
\documentclass[leqno]{article}
\usepackage[normal]{phaine_style}
\usepackage[margin=1.5in]{geometry}
\usepackage{xparse}
\include{standard_macros}

\renewcommand{\orientedtimes}{\mathbin{\overleftarrow{\times}}}

\newcommand{\ab}{\mathrm{ab}}

\renewcommand{\lex}{\mathrm{lex}}
\newcommand{\acc}{\mathrm{acc}}
\newcommand{\Funlexacc}{\Fun^{\lex,\acc}}

\renewcommand{\Fun}{\mathrm{Fun}}
\renewcommand{\cts}{\mathrm{cts}}
\renewcommand{\Gal}{\mathrm{Gal}}
\renewcommand{\Functs}{\Fun^{\cts}}

\newcommand{\Ani}{\categ{Ani}}
\renewcommand{\Spc}{\Ani}
\DeclareMathOperator{\Cond}{Cond}
\newcommand{\ICond}{\categ{Cond}}

\newcommand{\CondAni}{\Cond(\Ani)}
\newcommand{\CondSet}{\Cond(\Set)}

\newcommand{\CondCat}{\Cond(\Catinfty)}

\newcommand{\CondGrp}{\Cond(\Grp)}

\newcommand{\Comp}{\categ{Comp}}

\DeclareMathOperator{\FinSet}{\Set_{\fin}}

\DeclareMathOperator{\ProFin}{\Pro(\FinSet)}

\newcommand{\Extr}{\categ{Extr}}
\newcommand{\Extrop}{\Extr^{\op}}

\DeclareMathOperator{\wc}{wc}
\newcommand{\Affwc}{\Aff{}^{\kern0.15em\wc}}
\newcommand{\Affwcop}{\Aff{}^{\kern0.15em\wc,\op}}

\newcommand{\aff}{\ensuremath{\textup{aff}}}

\newcommand{\RTop}{\categ{RTop}}

\newcommand{\cond}{\mathrm{cond}}

\newcommand{\Bcond}{\Bup^{\cond}}
\newcommand{\BcondGal}{\Bup^{\cond}\mathrm{Gal}}

\newcommand{\prodisccompl}{^{\wedge}_{\disc}}

\DeclareMathOperator{\BGal}{BGal}

\newcommand{\Piet}{\Pi_{\infty}^{\et}}
\newcommand{\Pietprotrun}{\Pi_{<\infty}^{\et}}
\newcommand{\Pietprofin}{\widehat{\Pi}_{\infty}^{\et}}

\newcommand{\Picond}{\Pi_{\infty}^{\cond}}
\newcommand{\CondShape}[1]{\Picond(#1)}
\newcommand{\picond}{\uppi^{\cond}}

\newcommand{\pizerocond}{\picond_0}

\newcommand{\pionecond}{\picond_1}

\newcommand{\pioneet}{\uppi_1^{\et}}

\newcommand{\Zpos}[1]{{#1}_{\zar}^{\leq}}

\DeclareMathOperator{\subdiv}{sd}

\renewcommand{\Setfin}{\Set_{\fin}}

\newcommand{\ProSetfin}{\Pro(\Setfin)}

\newcommand{\ProAni}{\Pro(\Ani)}
\newcommand{\Anifin}{\Ani_{\uppi}}
\newcommand{\Anitrun}{\Ani_{<\infty}}
\newcommand{\ProAnifin}{\Pro(\Anifin)}
\newcommand{\ProAnitrun}{\Pro(\Anitrun)}

\renewcommand{\Spec}{\mathrm{Spec}}

\newcommand{\ZZell}{\ZZ_{\el}}

\NewDocumentCommand{\multgrp}{o}{\mathbb{G}\IfValueTF{#1}{_{\mup, #1}}{_{\mup}}}

\newcommand{\proethyp}{_{\proet}^{\hyp}}

\newcommand{\Xproethyp}{X\proethyp}

\newcommand{\Zaraff}[1]{\mathrm{Zar}{}_{#1}^{\kern0.1em\aff}}
\newcommand{\Et}[1]{\mathrm{\acute{E}t}_{#1}}

\newcommand{\ProEt}[1]{\mathrm{Pro\acute{E}t}_{#1}}

\newcommand{\ProEtaff}[1]{\mathrm{Pro\acute{E}t}{}_{#1}^{\kern0.1em\aff}}

\newcommand{\ProZaraff}[1]{\mathrm{ProZar}{}_{#1}^{\kern0.1em\aff}}

\title{\Large On Galois categories and condensed contractible schemes}

\author{Catrin Mair}

\date{\today}

\begin{document}

\maketitle


\begin{abstract} 
 We extend the study of the condensed \textit{Galois category} of a scheme introduced by Barwick, Glasman and Haine in their work on Exodromy \cite{Exodromy}. We elaborate its connection to Lurie's work on Ultracategories \cite{Ultracategories} and provide a description in terms of \textit{$w$-contractible rings}.
 We give a classification of schemes whose Galois category has an initial, respectively, a terminal object. This implies the condensed homotopy type of the scheme, which was studied in more detail in \cite{The_condensed_homotopy_type}, to be trivial.
 Furthermore, we compute a formula for the (underlying group of the) condensed fundamental group of a general Dedekind domain and show that it is non-trivial for $\Spec(\ZZ)$.
 This means that $\Spec(\ZZ)$ is not \textit{condensed contractible}.
\end{abstract}

\tableofcontents

\newpage


\section{Introduction}

Let $X$ be a qcqs scheme. In their work on Exodromy \cite{Exodromy}, Barwick, Glasman and Haine introduce the \textit{Galois category of $X$} $$\Gal(X)\in \Cond(\Cat)\period$$
This condensed category is one variant of a condensed version of the \textit{category of points} $\Pt(X_{\et})$ of the \'etale topos. 
Its name is justified by the fact that it is, somewhat, a globalization of the absolute Galois groups of the residue fields of points of $X$.
The \textit{condensed classifying anima} of $\Gal(X)$, which is given by formally inverting morphisms in the condensed category, defines a \textit{condensed anima} storing homotopy theoretical information on the scheme:
Based on the work on Exodromy, together with Haine, Holzschuh, Lara, Martini and Wolf, we study in \cite{The_condensed_homotopy_type} the \textit{condensed homotopy type} of $X$$$\CondShape{X} \in \CondAni$$ and its \textit{condensed fundamental groups} $\pionecond(X, \xbar)$, for $\xbar \rightarrow X$ a geometric point.
This object constitutes a joint refinement of both the \'etale homotopy type of Friedlander-Artin-Mazur \cites[§4]{MR676809}[§9]{MR245577} and the pro\'etale fundamental group of Bhatt-Scholze \cite[§7]{MR3379634}.
In the joint project, it is shown to be of homotopy theoretical interest beyond that:
On the one hand, it behaves similarly to the \'etale homotopy type and, for example, satisfies a fundamental fiber sequence \cite[Corollary 5.6]{The_condensed_homotopy_type}.
On the other hand, opposite to the classical case, the condensed fundamental group of the affine line over the complex numbers as well as its abelianization are non-trivial \cite[Corollary 7.8]{The_condensed_homotopy_type}
\begin{equation*}
    \picond_1(\AA_{\CC}^1,\xbar)^{(ab)}\neq 1 \period
\end{equation*}  
The non-triviality of the fundamental group can be fixed by passing to the \textit{quasiseparated quotient} for which also a van Kampen formula as well as a Künneth formula hold \cite[Theorem 1.12 and Theorem 1.13]{The_condensed_homotopy_type}.
\subsection*{Motivation}
The result on the condensed fundamental group  $\picond_1(\AA_{\CC}^1,\xbar)$ of the affine line to be non-trivial shows that the condensed homotopy type can store additional information on a scheme compared to its \'etale predecessor: While $\AA_{\CC}^1$ is \'etale contractible it is not \textit{condensed contractible}. This can be stated in terms of local systems. Namely, in \cite[Proposition A.1]{MR4609461} Hemo-Richarz-Scholbach show that the condensed homotopy type $\CondShape{X}$ classifies local systems on $X$ with coefficients in any condensed ring.
From this perspective, the fact that $\AA_{\CC}^1$ is not condensed contractible means that there exists a condensed ring with a non-trivial local system on $\AA_{\CC}^1$. It can be constructed as in \cite[Example 7.10]{The_condensed_homotopy_type}.
An analogous statement does not hold for the \'etale homotopy type \cite[Example 7.4.9]{MR3379634}. The \'etale homotopy type is only known to classify local systems with values in profinite rings such as $\ZZell$.\\
Motivated by this, in this article we focus on the question of when a scheme is \textit{condensed contractible} and give an overview on (non-)examples.
In particular, we compute the group $\picond_1(\Spec(A),\xbar)(\ast)$ for $A$ a general Dedekind domain (\cref{prop:Dedekind_domain}). This provides us with $\Spec(\ZZ)$ as a second example of an \'etale but not condensed contractible scheme, \cref{ex:fundamental_group_integers}.
\subsection*{Main results}
An approach to prove properties of or to compute the condensed homotopy type $\CondShape{X}$ of a qcqs scheme $X$ is to return to $\Gal(X)$ and to draw conclusions from there.
We follow this strategy to examine cases of schemes whose condensed homotopy type is contractible.
\begin{recollection}[(\cref{subsec:condensed_homotopy_type})]
Let $X$ be a qcqs scheme and $\Gal(X)\in \Cond(\Cat)$ its condensed Galois category (\cref{def:Gal}). The \textit{condensed homotopy type} of $X$ is defined as the \textit{condensed classifying anima} (\cref{def:cond_classifiying_anima}) of $\Gal(X)$
$$\CondShape{X}\colonequals\Bcond\Gal(X)\in \CondAni\period$$ The assignment $X\mapsto \CondShape{X}$ defines a functor $\Sch_{\mathrm{qcqs}}\rightarrow \CondAni$ on qcqs schemes.
Fixing a geometric point $\xbar\rightarrow X$ determines a point $\xbar\colon\ast\rightarrow \CondShape{X}$. We have \textit{condensed homotopy group} functors on pointed schemes
    $$\pi_n^{\cond}\colon \Sch_{\mathrm{qcqs},
*}\rightarrow \Cond(\Ccal),\ (\xbar \rightarrow X) \mapsto \pi_n^{\cond}(\CondShape{X},\xbar)\comma $$
where $\Ccal=\Grp$ for $n=1$ and $\Ccal=\Ab\Grp$ for $n\geq 2$,
and a \textit{condensed connected components} functor
$$\pi_0^{\cond}\colon \Sch_{\mathrm{qcqs}}\rightarrow \CondSet, \ X \mapsto \pi_0(\CondShape{X})\period$$
\end{recollection}
\begin{definition}[(\cref{def:condensed_contractible})]
    Let $X$ be a qcqs scheme.
    \begin{enumerate}
        \item We call $X$ \textit{condensed connected} if its condensed set of connected components is $\pizerocond(X)=\ast$. 
        \item We call $X$ is \textit{condensed contractible} if its condensed homotopy type is $\CondShape{X}=\ast$. 
    \end{enumerate}
\end{definition}
\begin{remark}[(\cref{rem:trivial_on_fundgroups})]
Equivalently, a scheme is condensed contractible if it is condensed connected and all its condensed homotopy groups are trivial.    
\end{remark}
In \cref{sec:via_weakly_proobjects}, we prove an alternative description of $\Gal(X)$ using the notion of \textit{$w$-contractible rings}  introduced by Bhatt-Scholze in \cite{MR3379634}. 
\begin{theorem}{(\cref{prop:Galois_weaklycontractible})}
Let $X$ be an affine scheme. For every $S \in \Extr$, the category $\Gal(X)(S)$ identifies with the category whose objects are $w$-contractible rings $A$ lying above $X$, i.e., coming with a structure map $\Spec(A) \rightarrow X$, satisfying $\pi_0(\Spec(A))=S$ and whose morphisms are given by maps of rings inducing the identity on $\pi_0$ of the induced map of prime spectra.
\end{theorem}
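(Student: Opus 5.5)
We first make the definition of $\Gal(X)(S)$ explicit and then match it with $w$-contractible rings via the Bhatt--Scholze theory of pro-étale covers. Following Barwick--Glasman--Haine, for $S$ extremally disconnected the objects of $\Gal(X)(S)$ are pairs $(s,\xi)$. Here $s\colon S\to X$ is a continuous map to the underlying (constructible-topology) space. The second datum $\xi$ is a lift of $s$ to "families of geometric points": for each $t\in S$ a separably closed field $\kappa(s(t))^{\sep}$, varying continuously in $t$. Equivalently, $\Gal(X)(S)$ can be described as the category of points of $X_{\et}$ parametrized by $S$: geometric morphisms $S\to X_{\et}$, or morphisms of ringed spaces out of $S$ with strictly henselian-type stalks, where morphisms are specializations/natural transformations. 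We take this "ultracategory"/points description, established earlier in the paper, as the input.

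\textbf{Step 1: from a $w$-contractible ring to an object of $\Gal(X)(S)$.} Let $A$ be $w$-contractible with $\pi_0(\Spec A)=S$ and $f\colon\Spec A\to X$. By Bhatt--Scholze, $A$ is $w$-strictly local. Hence the closed points $\Spec(A)^c\to\pi_0(\Spec A)=S$ form a homeomorphism, and every local ring at a closed point is strictly henselian. Composing the inverse $S\cong \Spec(A)^c\hookrightarrow\Spec A$ with $f$, and taking the separable closures of residue fields at closed points, yields a continuous family of geometric points of $X$ indexed by $S$. This is an object of $\Gal(X)(S)$. A ring map $A'\to A$ over $X$ inducing the identity on $\pi_0$ sends closed points to points in the same connected component. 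Each component of a $w$-strictly local ring is the spectrum of a strictly henselian local ring, so the closed point of each component of $\Spec A$ maps to a generization of the closed point of the corresponding component of $\Spec A'$. This gives the specialization morphism in $\Gal(X)(S)$, and functoriality is clear.

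\textbf{Step 2: essential surjectivity.} Given an object of $\Gal(X)(S)$, we build $A$ as follows. For each $t\in S$ we have the strict henselization $\mathcal{O}_{X,s(t)}^{\mathrm{sh}}$. We glue these over $S$ by writing $S=\lim S_i$ as profinite sets and taking a colimit of étale $X$-algebras indexed by clopen partitions. Concretely, the object is a pro-(ind-)étale affine $X$-scheme $\Spec A_0$ whose connected components are the $\Spec\mathcal{O}^{\mathrm{sh}}_{X,s(t)}$, with $\pi_0=S$. Such an $A_0$ is $w$-strictly local, and since $S$ is extremally disconnected it is already $w$-contractible by Bhatt--Scholze's criterion ($w$-strictly local with extremally disconnected $\pi_0$).

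\textbf{Step 3: full faithfulness.} This is the main obstacle. Arbitrary $w$-contractible $A$ over $X$ need not be ind-étale over $X$, so hom-sets must be compared via a universal property. We would show the "minimal" model $A_0$ built in Step 2 is canonically initial among $w$-contractible $A$ realizing a given object. The idea is that ind-étale maps out of a $w$-strictly local ring lift uniquely along henselian pairs (the stalkwise universal property of strict henselization, globalized using that $S$ is extremally disconnected so that families of sections split). Maps in $\Gal(X)(S)$ then correspond to maps of the minimal models. If the morphisms in the target category are intended up to this identification (the paper's phrasing "identifies with" suggests one localizes at maps inducing isomorphisms of associated points), we check that every $A$ receives a map from $A_0$ that becomes invertible. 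We would first verify everything stalkwise at each $t\in S$, where it reduces to the universal property of $\mathcal{O}^{\mathrm{sh}}$. The continuity in $t$ would then follow by writing everything as limits over finite clopen decompositions of $S$.
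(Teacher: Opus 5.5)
Step 1 and your final appeal to the Bhatt--Scholze criterion ($w$-strictly local with $\pi_0\in\Extr$ implies $w$-contractible) are fine. The gap is in Step 2, which asserts without argument exactly the nontrivial point. Your recipe never uses coherence, the hypothesis that separates $\Gal(X)(S)$ from the category $\Pt(X)(S)$ of all $S$-parametrized points; your ``equivalently, geometric morphisms $S\to X_{\et}$'' already drops it. Being an affine pro-étale $X$-scheme with $\pi_0=S$ whose connected components are strict localizations does \emph{not} imply $w$-locality.

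Here is a counterexample. Let $R$ be a strictly henselian rank-one valuation ring with separably closed fraction field $K$, for instance the integral closure of $\ZZ_p$ in an algebraic closure of $\QQ_p$. Then $X=\Spec R=\{x\le\eta\}$ is everywhere strictly local and $X_{\et}\simeq\Sh(|X|,\Set)$. Take $S=\beta\NN$ and $U=\NN$, which is open but not closed. Let $f^*$ be pullback along the map $S\to|X|$ sending $U$ to $\eta$ and $S\setminus U$ to $x$; this map is Zariski-continuous but not quasi-compact. The natural way to carry out your gluing (equivalently, pro-representing $\Gamma_S\circ f^*$) yields $Z=X_S\setminus(U\times\{x\})$, where $X_S=\lim_i\coprod_{S_i}X$ for $S=\lim_i S_i$. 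Then $Z$ is the cofiltered intersection of the affine opens $X_S\setminus(U'\times\{x\})$ for $U'\subseteq U$ clopen. Hence it is affine and pro-étale over $X$, with $\pi_0(Z)=S$ and every component equal to $\Spec R$ or $\Spec K$, i.e.\ a strict localization. Yet for $t\in\overline{U}\setminus U$, the non-closed point $(t,\eta)$ lies in the closure of the closed points $U\times\{\eta\}$. So $Z$ is neither $w$-local nor $w$-contractible.

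You therefore need an argument that converts coherence into $w$-locality, equivalently into projectivity of the pro-object. The paper avoids hand gluing altogether:
\begin{itemize}
\item Lurie's equivalence $\mathrm{Stone}_{\Ccal}\simeq\Pro^{\mathrm{wp}}(\Ccal)$ for $\Ccal=X_{\et}^{\coh}$ shows that $\Gamma_S\circ f^*$ is pro-represented by a weakly projective pro-étale affine $W$ with $\pi_0(W)=S$.
\item For coherent $f^*$, the pro-extension of $\Gamma_S\circ f^*$ factors through $\Pro(\Sh(S,\Set)^{\coh})\subset\CondSet_{/S}$. There evaluation at the projective object $S$ preserves effective epimorphisms, so $W$ is projective, i.e.\ weakly contractible.
\end{itemize}

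Step 3 rests on a misreading of the target category. ``Lying above $X$'' must mean pro-étale over $X$: the paper works in $\Pro(\mathrm{Aff}\Et{X})$, i.e.\ with weakly contractible affines in $\ProEt{X}$. With arbitrary structure maps the statement is false. For $X=\Spec k$ with $k$ separably closed and $S=\ast$ one has $\Gal(X)(\ast)\simeq\ast$, while $k[[t]]$ already has non-identity endomorphisms over $k$. Localizing at maps that ``induce isomorphisms of associated points'' changes the statement rather than proving it.

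With the correct reading, no minimal-model argument is needed. For $Y=\Spec A$ weakly contractible and pro-étale over $X$, the point built from $Y_{\mathrm{cl}}$ satisfies $\Gamma_S f^*(F)=F(Y)$ for étale $F$; the paper gets this from $h^*(F)(Y_{\mathrm{cl}})=F(Y)$, Stacks Project Tag 09AC. Hence the two constructions are mutually inverse on objects, and ring maps over $X$ are exactly natural transformations $\Gamma_S\circ f_1^*\to\Gamma_S\circ f_2^*$. The remaining content is that those compatible with $\pi_0$ are exactly the transformations $f_1^*\to f_2^*$. This is again Lurie's theorem. Your stalkwise-plus-continuity sketch would have to reprove it, and as written it is only a plan.
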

It finds application in \cref{sec:Trivial_Shapes} where we prove a classification of \textit{condensed contractible} schemes.
\begin{theorem}
Let $X$ be a qcqs scheme and $\Pt(X_{\et})$ the category of points of its \'etale topos and $\Gal(X)$ its condensed Galois category. Then we have equivalences:
\begin{itemize}
\item[a)] $X$ is everywhere strictly local and irreducible. $\iff$ $\Gal(X)$ is cofiltered. $\iff$ $\Gal(X)$ has a terminal object.  $\iff$ $\Pt(X_{\et})$ has a terminal object. (\cref{thm:Galterminal})
\item[b)] $X$ is the spectrum of a strictly henselian local ring. $\iff$ $\Gal(X)$ is filtered. $\iff$ $\Gal(X)$ has an initial object. $\iff$ $\Pt(X_{\et})$ has an initial object. (\cref{thm:Galinitial})
\end{itemize}
In particular, in both cases, a) and b), the schemes are \textit{condensed contractible}.
\end{theorem}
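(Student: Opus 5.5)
We will prove the two chains of equivalences separately, and in each case deduce condensed contractibility at the end. We use the description of $\Gal(X)(S)$ for $S\in\Extr$ by $w$-contractible rings over $X$ (\cref{prop:Galois_weaklycontractible}), after reducing to the affine case where needed. We also use that $\Gal(X)(\ast)\simeq\Pt(X_{\et})$, whose objects are geometric points $\xbar\to X$ and whose morphisms are specializations, i.e.\ maps of strict henselizations over $X$.

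\textbf{The point-set equivalences.} For b), if $X=\Spec(A)$ with $A$ strictly henselian local, the closed geometric point $\bar s$ is initial in $\Pt(X_{\et})$: every geometric point is a specialization of nothing but specializes to $\bar s$ in a way unique up to the contractible choice given by $X^{\mathrm{sh}}_{\bar s}=X$. Hence $\Hom(\bar s,\xbar)$ (with the paper's variance) is a point. Conversely, suppose $\Pt(X_{\et})$ has an initial object $\bar s$. Every point of $X$ is then a generization of the image $s$, so $X$ is local with closed point $s$. Moreover $\Aut(\bar s)$ is trivial, and every geometric point factors uniquely through $\Spec\mathcal O^{\mathrm{sh}}_{X,\bar s}\to X$. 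This forces $\mathcal O_{X,s}=\mathcal O^{\mathrm{sh}}$, i.e.\ $X$ is strictly henselian. For a), the argument is dual. A terminal object $\bar\eta$ requires a unique generic point $\eta$ with trivial Galois group, and unique maps $\xbar\to\bar\eta$. The latter is exactly the statement that every strict henselization $X_{\xbar}^{\mathrm{sh}}$ is irreducible with separably closed generic residue field, which unwinds to ``everywhere strictly local and irreducible''. The reverse implication reads these conditions back.

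\textbf{Passing to condensed and (co)filtered statements.} An initial or terminal object in $\Gal(X)$ means a section $\ast\to\Gal(X)$ that is initial or terminal levelwise on all $S\in\Extr$. Trivially it implies the statement for $\Pt(X_{\et})=\Gal(X)(\ast)$, and it implies (co)filteredness. For the converse from the geometric conditions, we build the object at level $S$ via \cref{prop:Galois_weaklycontractible}. In b) the candidate is the constant family $C(S,A)$-type $w$-contractible ring attached to $\bar s$, namely the $w$-contractible cover of $\Spec A$ with $\pi_0=S$ localized at the closed points. In a) it is the ring built from the separable closure of the generic point, with $\pi_0=S$. We then check the universal property fiberwise over $S$, using that maps of $w$-contractible rings inducing the identity on $\pi_0$ are determined on local rings at closed points, which are strictly henselian. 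Finally, filtered implies an initial object: apply filteredness to the (possibly empty) diagram data at $S=\ast$ together with the description of morphisms. This yields a cone point that is forced to be initial, because automorphism groups are profinite and must be trivial. We expect this step to be the main obstacle, namely showing that (co)filteredness of the condensed category already forces the geometric condition. We would first try to show that (co)filteredness at $S=\ast$ forces $X$ local (resp.\ irreducible), and then that equalizing parallel arrows kills the Galois groups.

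\textbf{Contractibility.} A condensed category with an initial or terminal object has contractible classifying anima levelwise, since $\Bup$ of such a category is contractible and the natural transformations to or from the constant functor are compatible with sheafification. Hence $\CondShape{X}=\Bcond\Gal(X)\simeq\ast$.
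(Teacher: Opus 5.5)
Your overall architecture matches the paper's: classify via $\Pt(X_{\et})$, build the condensed terminal or initial object levelwise, and reduce (co)filteredness to the underlying category. However, the steps that carry the content are asserted rather than proved.

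In b), the sentence ``every geometric point factors uniquely through $\Spec(\mathcal{O}^{\mathrm{sh}}_{X,s})\to X$; this forces $\mathcal{O}_{X,s}=\mathcal{O}^{\mathrm{sh}}$'' is exactly the implication to be shown. Uniqueness of lifts only gives that $\Spec(A^{\mathrm{sh}})\to\Spec(A)$ is injective with trivial residue field extensions. Since the map is faithfully flat and quasi-compact, it is then a universal homeomorphism. You still need an argument that this makes $A$ henselian. The paper uses the idempotent criterion for finite $A$-algebras (\cref{prop:initalconditions}, \cref{prop:localring_homeo}). Alternatively, $A^{\mathrm{sh}}\otimes_A A^{\mathrm{sh}}\to A^{\mathrm{sh}}$ is flat, surjective and bijective on spectra, hence an isomorphism, so $A\to A^{\mathrm{sh}}$ is a faithfully flat epimorphism.

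In a), ``which unwinds to everywhere strictly local'' hides two non-formal inputs.
First, uniqueness of $\xbar\to\bar\eta$ only says that the generic fibre of $\Spec(\mathcal{O}^{\mathrm{sh}}_{X,x})\to\Spec(\mathcal{O}_{X,x})$ is a single point. You need flatness (every maximal point of the source lies over $\eta$) to conclude that $\Spec(\mathcal{O}^{\mathrm{sh}}_{X,x})$ is irreducible, i.e.\ that $X$ is geometrically unibranch (\cref{cor:henselirred}).
Second, you need Schr\"oer's theorem that an irreducible, geometrically unibranch scheme with separably closed $\kappa(\eta)$ is everywhere strictly local (\cref{prop:eslgeomuni}).

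On the condensed side, there are three problems.

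\emph{Affineness.} \cref{prop:Galois_weaklycontractible} is only available for affine $X$, but in a) an irreducible everywhere strictly local qcqs scheme need not be affine. There is no evident way to ``reduce to the affine case'' for $\Gal(X)(S)$. Also, your fibrewise check only gives uniqueness of the maps, not their existence. The paper sidesteps both issues in a): for everywhere strictly local $X$ the \'etale and Zariski topoi agree, so $\Gal(X)(S)\simeq\Hom_{qc}(S,|X|)$ ordered by pointwise specialization (\cref{prop:everwhstrictlylocal}), and the constant map at $\eta$ is terminal for every $S$. In b), existence and uniqueness of $A_S\to B$ come from the adjunction $\pi_0\dashv(S\mapsto X_S)$ (\cref{prop:alter_Gal_prop}).

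\emph{Conventions.} With the usual conventions, a category with a terminal object is filtered, so the correct pairings are terminal/filtered and initial/cofiltered, as in \cref{prop:filt_term_ini}; the displayed statement has them swapped. ``Filtered implies initial'' is false: the spectrum of the ring of all algebraic integers is irreducible and everywhere strictly local, so its $\Gal$ has a terminal object. It is not local, however, so there is no initial object.

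\emph{The (co)filtered step.} It does not work as written: profiniteness does not make automorphism groups trivial. What works is the argument you name only at the end.
Filteredness of $\Pt(X_{\et})$ makes $\Zpos{X}$ directed, so $X$ is irreducible. A generic geometric point $t$ is then weakly terminal, with $\Hom(t,x)=\emptyset$ for $x\not\simeq t$ and $\Hom(t,t)$ consisting of automorphisms.
Given $f,g\colon x\to t$, any coequalizing $h\colon t\to y$ must be an automorphism of $t$, so $f=g$. The argument for cofiltered and initial is dual.
This yields a terminal (resp.\ initial) object in $\Pt(X_{\et})$, and you then conclude via the $\Pt$-level classification and the construction above, once both are actually proved.
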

Beyond this, we provide further (non-)examples of \textit{condensed contractible} schemes in \cref{sec:condensed_contractible}. Of most interest is the example on the spectrum of the integers.
\begin{theorem}{(\cref{prop:Dedekind_domain})}\label[theorem]{thm:condensedfundamental_Dedekind}
Let $A$ be a Dedekind domain with fraction field $K=\Frac(A)$ and generic point $\etabar \rightarrow \Spec(A)$. The \textit{underlying group} of the condensed fundamental group is the quotient $$\pionecond(\Spec(A), \etabar)(\ast)=G_{K}/N\comma $$ where $G_{K}$ is the absolute Galois group of $K$ and $N$ the (abstract) normal closure of the subgroup generated by all inertia groups $I_p$ at finite primes $p\subset A$.
\end{theorem}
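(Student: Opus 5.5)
We would compute $\pionecond(\Spec(A),\etabar)(\ast)$ directly from the description $\CondShape{X}=\Bcond\Gal(X)$. We first evaluate at the point $S=\ast$, which is extremally disconnected. The underlying anima of the condensed classifying anima is computed by evaluating at $\ast$ and then localizing. In particular, $\pi_1$ of $\Bcond\Gal(X)(\ast)$ is the fundamental group of the groupoidification of the category $\Gal(X)(\ast)$. We take as known the standard fact, stated earlier for condensed anima, that the underlying set and group functors commute with $\pi_0$ and $\pi_1$ when the condensed anima is evaluated at $\ast$. So the problem reduces to a purely categorical computation: find the fundamental group of the localization $|\Gal(X)(\ast)|$ at the object $\etabar$.

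Next we make $\Gal(X)(\ast)$ explicit for $X=\Spec(A)$. Its objects are geometric points, namely the generic point $\etabar$ and, for each maximal ideal $p$, a point $\bar{p}$. The automorphism groups are $\Aut(\etabar)=G_K$ and $\Aut(\bar{p})=G_{\kappa(p)}$. The morphisms $\bar{p}\to\etabar$ (specializations) form a $G_K$-torsor quotient; concretely, after choosing a strict henselization and an embedding $K^{sep}\hookrightarrow \operatorname{Frac}(\mathcal O^{sh}_{A,p})^{sep}$, the specializations are $\Hom(\bar p,\etabar)\cong G_K/I_p$ (or its opposite, depending on conventions). The precomposition action of $G_{\kappa(p)}\cong D_p/I_p$ matches the action of the decomposition group $D_p$. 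This is the usual exodromy description of $\Gal$ for a one-dimensional scheme, and we expect it to follow from \cref{prop:Galois_weaklycontractible} applied to strictly henselian local rings.

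We then compute the groupoidification by van Kampen for categories of this shape. The category is a "star" whose center is the one-object groupoid $BG_K$, with one leaf $BD_p/I_p$ for each $p$, attached through the bimodule $G_K/I_p$. Choosing a specialization $s_p\colon\bar p\to\etabar$ for each $p$ gives a maximal tree, and inverting it identifies each $\bar p$ with $\etabar$. The resulting fundamental group is the pushout of $G_K$ and the groups $D_p/I_p$ over the groups $D_p$. The relations come from the fact that any two specializations $\sigma s_p$ and $s_p$ become equal exactly when $\sigma\in I_p$, and from the equivariance of the $D_p$-action. Since each $D_p\to D_p/I_p$ is surjective, this pushout is $G_K/N$, with $N$ the normal closure of the union of the $I_p$.

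The main obstacle is the second step. There we must pin down $\Gal(X)(\ast)$ precisely as an ordinary (non-topologized) category, including that the morphism sets are exactly $G_K/I_p$ with the stated actions and that composition has no further identifications. We also need that evaluation at $\ast$ commutes with $\Bcond$ for $\pi_1$. For the former we would use the $w$-contractible description together with the fact that strict henselizations of a Dedekind domain at maximal ideals are strictly henselian DVRs with fraction field Galois group $I_p$. For the latter we would argue via the definition of $\Bcond$ as a sheafified levelwise localization, using that $\ast$ is extremally disconnected so that sheafification does not alter the value there.
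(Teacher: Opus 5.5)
Your proposal is correct, but it reaches the result by a different route from the paper.

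\textbf{The paper's route.} The paper never writes down the hom-sets of $\Pt(X_{\et})$. It uses the exodromy formula expressing $\Bup\Gal(X)(\ast)$ as a colimit over the subdivision poset $\subdiv(\Zpos{X})^{\op}$ of the d\'ecollage of strata and links. It identifies these topos-theoretically: the link over $\{p<\eta\}$ is the \'etale topos of $\Spec(\Frac(A_p^h))$, obtained via oriented fibre products and the reconstruction lemma. After profinite shape and materialization, this gives the span $\Gal_{\kappa(p)}\leftarrow \Gal_{K_p}\to \Gal_K$, i.e.\ $D_p/I_p\leftarrow D_p\to G_K$. The paper then takes the colimit of $\pi_1$ using van Kampen for pointed connected $1$-truncated anima.

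\textbf{Your route.} You use the classical description of points already recalled in \cref{rem:Pointsetaletopos}; for $S=\ast$ you do not need \cref{prop:Galois_weaklycontractible}. You compute $\Hom(\bar p,\etabar)$ as the $K$-embeddings of the inertia field $\Frac(\mathcal{O}^{\mathrm{sh}}_{A,p})=(K^{\mathrm{sep}})^{I_p}$ into $K^{\mathrm{sep}}$. This is $G_K/I_p$ as a $(G_K,D_p/I_p)$-bimodule. You then read off $\pi_1$ of the nerve from a maximal-tree presentation. The two computations meet at the same amalgam $G_K\ast_{D_p}(D_p/I_p)$ over all $p$, which is $G_K/N$.

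\textbf{What each buys.} Your argument is more elementary and transparent:
\begin{itemize}
\item it avoids strata, links, profinite shapes and materialization;
\item it makes visible why only the abstract normal closure appears, since relations in a presentation are finite words;
\item it works directly with the strict henselization and inertia field, instead of detouring through completions $K_p$ (which for a general Dedekind domain tacitly uses that a rank-one henselian field and its completion have the same absolute Galois group).
\end{itemize}
The paper's argument is more structural. It sits inside the d\'ecollage framework and extends to schemes with longer specialization chains, where listing all composites in $\Pt(X_{\et})$ by hand becomes unwieldy.

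\textbf{On your first step.} In the paper $\Bcond$ and $\pionecond$ are defined levelwise on $\Extr$, with no sheafification. So $(\Bcond\Gal(X))(\ast)=\Bup\Pt(X_{\et})$ holds by definition, and the commutation with evaluation at $\ast$ that you flag as an obstacle is automatic.
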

\begin{example}[(\cref{ex:fundamental_group_integers})]
For $A=\ZZ$, the quotient in \cref{thm:condensedfundamental_Dedekind} and its abelianization are non-trivial. Thus the whole condensed fundamental group and its abelianization are non-trivial $$\picond_1(\Spec(\ZZ),\etabar)^{(\ab)}\neq 1\period $$
In other words, $\Spec(\ZZ)$ is not condensed simply connected and thus not condensed contractible.
However, the scheme is known to have trivial (extended) \'etale homotopy groups \cite[Example 11.2.2]{Exodromy} and, by normality, also a trivial pro-\'etale fundamental group \cite[Lemma 7.4.10]{MR3379634}. 
\end{example}
It follows from the example that there exists a non-trivial local system of $\Spec(\ZZ)$ with values in a condensed ring, constructed accordingly to \cite[Example 7.10]{The_condensed_homotopy_type}.


\subsection*{Conventions}\label{intro_subsec:notational_conventions}
\subsubsection*{Set-theoretic conventions}
Whenever working with condensed mathematics, there arise set-theoretic issues from the fact that defining sites are large. We follow the approach used in the setting of \textit{pyknotic mathematics} \cites{pyknoticI}{exodromy}  to deal with these. More on this can be found in \cref{ignorsetiss}. As an upshot: We will neglect set-theoretic issues in the sense that we will implicitly work with $\kappa$-small objects or functors for strongly inaccessible cardinals $\kappa$. We use results on condensed/pyknotic mathematics from \cites{pyknoticI}{Exodromy}{Scholze:condensednotes}{Scholze:analyticnotes} without any further notice on set-theory. 
\subsubsection*{On higher category theory}
We follow the conventions by Lurie in \cite{HTT}, \cite{Kerodon} and \cite{SAG} regarding aspects of higher category theory like $\infty$-categorical definitions and constructions.
The prefix "$\infty$" might be omitted if it is clear that all objects are considered in the $\infty$-categorical setting.
Moreover, we make the following implicit assumptions:
\begin{itemize}
\item Whenever working with a $1$-category in the $\infty$-setting, it will be implicitly identified with its corresponding $\infty$-category by the nerve construction.
\item If there exists a fully faithful embedding of $\infty$-categories 
$$\iota\colon \Ccal \hookrightarrow \mathcal{D}\comma $$ we will usually identify an object $X \in \Ccal$ with its image under $\iota$ and will refer to the corresponding object in $\mathcal{D}$ with $X$ as well.
\item For an $\infty$-category $\Ccal$, we denote the $\infty$-category of pro-objects on $\Ccal$, which is naturally identified with the opposite of the \category of left exact accessible functors $ \fromto{\Ccal}{\Ani} $, by
\begin{equation*}
        \Pro(\Ccal) \equivalent \Funlexacc(\Ccal,\Ani)^{\op} \comma
\end{equation*}
see \cite[\SAGthm{Definition}{A.8.1.1} \& \SAGthm{Proposition}{A.8.1.6}]{SAG}. We will implicitly identify an object in $\Ccal$ with its image under $\Ccal\hookrightarrow\Pro(\Ccal)$. 
\end{itemize}
\subsubsection*{Notational conventions}
Throughout this paper, there appear several ($\infty$-)categories. 
We fix the following (standard) notation.
\begin{enumerate}
    \item We write $\Set$ for the category of sets and $\Set^{\mathrm{fin}}$ for the category of finite sets.
    \item We write $\Grp$ for the category of groups and $\Ab\Grp$ for the category of abelian groups.
     \item  We write $\Top$ for the category of topological spaces, $\Comp$ for the category of compact Hausdorff spaces, $\ProFin$ for the category of totally disconnected compact Hausdorff spaces (=profinite sets) and further $\Extr$ for the category of extremally disconnected profinite sets.
	\item We write $\Cat$, respectively, $ \Catinfty $ for the large ($\infty$-)category of small ($\infty$-)categories, and use $ \Ani \subset \Catinfty $ for the full subcategory spanned by the anima (\groupoids/ spaces). By $\Ani_{< \infty}$ and $\Ani_{\pi}$ we denote the subcategories of truncated, respectively, of $\pi$-finite anima.
    \item We write $\Cond(\Ccal)$ for the ($\infty$-)category of condensed objects on some ($\infty$-)category $\Ccal$.
    \item We write $\Fun(\Ccal, \mathcal{D})$ for the $\infty$-category of functors between $\infty$-categories $\Ccal$ and $\mathcal{D}$ and $\Functs(\Ccal, \mathcal{D})$ for the $\infty$-category of functors between condensed $\infty$-categories $\Ccal$ and $\mathcal{D}$.
    \item We write $\RTop_{\infty}$ for the $\infty$-category of $\infty$-topoi with \textit{geometric morphisms}, i.e., a morphism $f\colon \mathcal{X} \rightarrow \mathcal{Y}$ between two $\infty$-topoi is given by a functor $f_*\colon \mathcal{X} \rightarrow \mathcal{Y} $  admitting a left exact left adjoint $f^* \colon \mathcal{X} \rightarrow \mathcal{Y}$. We refer to the left adjoint $f^*$ as an \textit{algebraic morphism}. The $\infty$-category of morphisms between two $\infty$-topoi $\mathcal{X}$ and $\mathcal{Y}$ in $\RTop_{\infty}$ is denoted by $\Fun_*(\mathcal{X},\mathcal{Y})$. It is the opposite category of the $\infty$-category of algebraic morphisms denoted by $\Fun^*(\mathcal{X},\mathcal{Y})$.
     \item For a site $\Ccal$ (with a Grothendieck topology $\tau$) we write $\Sh(\Ccal)$ for the $\infty$-category of sheaves of anima on $(\Ccal,\tau)$, $ \Shhyp(\Ccal) \subset \Sh(\Ccal)$ for the subcategory of hypercomplete objects (i.e., sheaves of anima satisfying descent with respect to hypercoverings), and $\Sh(\Ccal, \Set)$ for the category of sheaves of sets on $(\Ccal, \tau)$, i.e., the subcategory of $0$-truncated objects in $\Sh(\Ccal)$.
    \item We write $\Sch_{\mathrm{qcqs}}$ for the category of qcqs schemes. Given a scheme $ X $, we write $\Et{X}$ and $\ProEt{X}$ for its \emph{étale} and \emph{proétale site}, respectively.
    Moreover, we write $X_{\et} \colonequals \Sh(\Et{X})$ and $X_{\proet} \colonequals \Sh(\ProEt{X})$ for the \'etale and pro-\'etale \topoi on $ X $, respectively. 
\end{enumerate}


\subsection*{Acknowledgments}\label{intro_subsec:acknowledgments}

This paper emerged from the second chapter of my dissertation \cite{CatrinsThesis}.
I would like to once again thank my advisor, Torsten Wedhorn, for all his support, as well as the referees Clark Barwick and Timo Richarz.
Special thanks go to Peter Scholze for answering my questions communicated via email, Clark Barwick and Peter Haine for explaining me their work on exodromy, and to my coauthors in the joint project \cite{The_condensed_homotopy_type} on the condensed homotopy type: Peter Haine, Tim Holzschuh, Marcin Lara, Louis Martini and Sebastian Wolf.
I also want to thank Jakob Stix for the final argument in \cref{ex:fundamental_group_integers}, as well as Marcin Lara and Alex Youcis for insights on this. 
I gratefully acknowledge support by Deutsche For\-schungs\-ge\-mein\-schaft (DFG, German Research Foundation) through the Collaborative Research Centre TRR 326 Geometry and Arithmetic of Uniformized Structures, project number 444845124, and Germany's Excellence Strategy EXC 2044/2 - 390685587, Mathematics Münster: Dynamics-Geometry-Structure.


\section{Preliminaries}\label[section]{sec:preliminaries}
In this section, we collect basics on categorical aspects of condensed mathematics, recap Galois categories of schemes and give an upshot on condensed homotopy types of schemes before we provide more explicit descriptions, examples and computations on these objects in \cref{sec:via_weakly_proobjects} and \cref{sec:condensed_contractible}. 
\subsection{Recollection on condensed mathematics}
We review  condensed sets, condensed anima and condensed $\infty$-categories. As additional sources consult \cite{pyknoticI} and \cite{Scholze:condensednotes}. Note \cref{ignorsetiss} for a comment on set-theoretic issues that come along with definitions in the world of condensed mathematics.
\subsubsection{Condensed Sets and condensed anima}
The basic objects in condensed mathematics are \textit{condensed sets}. Originally, these are defined as sheaves of sets on the \textit{pro-\'{e}tale site} $\ProEt{*}$ of a point, i.e., the spectrum of a separably closed field. See \cite{MR3379634} for an introduction to the \textit{pro-\'etale topology of schemes.} The category of condensed sets is denoted by $$\CondSet \colonequals \Sh(\ProEt{*},\Set) \period$$
Sheaves on $\ProEt{*}$ can be identified with sheaves on certain subcategories of topological spaces. The notion of condensed sets extends in the $\infty$-categorical world to the notion of \textit{condensed anima}, denoted by $\CondAni$, by considering \textit{hypersheaves} with values in the $\infty$-category $\Ani$ of \textit{anima} (or spaces).
\begin{notation}\label[notation]{not:topo_notions}
We denote by $\ProFin$ the category of \textit{profinite sets}. By Stone duality, the category $\ProFin$ can be identified with the full subcategory of totally disconnected spaces in the category $\Comp$ of \textit{compact Hausdorff spaces}.
By a result by Gleason, stated in \cite[p. III 3.7]{Johnstone1982StoneSpaces}, the projective objects in $\Comp$ are the \textit{extremally disconnected} profinite sets $\Extr \subset \Comp$.
\end{notation}
\begin{recollection}\label[recollection]{rec:definingsites}
There are at least four different defining sites for the $\infty$-category $\CondAni$ of \textit{condensed anima} defining the same $\infty$-topos of hypercomplete $\infty$-sheaves:
\begin{itemize}
    \item[(i)] The pro-\'{e}tale site $\ProEt{\Spec(k)}$ for $k$ any separably closed field,
    \item[(ii)] the site $\Comp$ of compact Hausdorff spaces,
    \item[(iii)] the site $\ProFin$ of profinite sets, 
    \item[(iv)] the site $\Extr$ of extremally disconnected profinite sets.
\end{itemize}
Here, coverings for the sites (ii)-(iv) are given by jointly surjective families of continuous maps of topological spaces.
All of these sites define the same $\infty$-topos on hypercomplete $\infty$-sheaves: Every compact Hausdorff space admits a surjection from an extremally disconnected compact Hausdorff space, namely from the Čech--Stone compactification of its underlying discrete space. Moreover, $\ProFin$ identifies with the subcategory of affine schemes in $\ProEt{\Spec(k)}$. Thus there are hierarchies of bases 
$ \Extr \subset \ProFin \subset \Comp \text{ and } \ProFin \subset \ProEt{\Spec(k)}.$
By \cite[Corollary A.7]{arXiv:2001.00319}, restriction of sites induces equivalences of hypercomplete \topoi
    \begin{equation}\label{eq:equivalent_descriptions_of_condensed_anima}
        \Shhyp(\ProEt{\Spec(k)}) \equivalence \Shhyp(\ProSetfin) \equivalence \Shhyp(\Extr) \period
    \end{equation}
    The \topos $\CondAni$ of \textit{condensed anima} is any of the equivalent \topoi \eqref{eq:equivalent_descriptions_of_condensed_anima}.
\end{recollection}
\begin{recollection}\label[recollection]{rec:fully_faithful_condset_to_condani}
    The category $\CondSet$ of condensed sets embeds fully faithfully into the $\infty$-category $\CondAni$ of condensed anima by post-composition of functors with the fully faithful embedding $v\colon\Set \hookrightarrow \Ani$. The left adjoint is given by post-composition with the $0$-truncation functor $\pi_0\colon \Ani \rightarrow \Set$. In other words, the objects of $\CondAni$ in the essential image of $\CondSet$ are exactly the $0$-truncated objects. They are called \textit{static} condensed anima.
\end{recollection}
\begin{remark}\label[remark]{rem:extdis_projective}
The sites $\Comp$ and $\Extr$ in \cref{rec:definingsites} can be described in more topos theoretic means:
\begin{itemize}
    \item[a)] The site $\Comp$ is the subcategory of $0$-truncated \textit{coherent} objects of $\CondAni$, see \cite[Theorem 2.16]{Scholze:condensednotes}.
    \item[b)] The site $\Extr$ is the subcategory of (0-truncated) \textit{compact projective} objects of $\CondAni$, compare \cites[Chapter 3]{arXiv:2012.10502}[Example 11.3 (4)]{Scholze:analyticnotes}.
    Equivalently, by a) and \cref{not:topo_notions} they form the subcategory of $0$-truncated coherent \textit{projective} objects in $\CondAni$.
\end{itemize}
\end{remark}
\begin{recollection}\label[recollection]{rec:fin_prod_pres_presheaves}
On extremally disconnected profinite sets, hypersheafification can be omitted and the $\infty$-sheaf condition simplifies. More precisely, a presheaf $ F $ on $ \Extr $ is a hypersheaf if and only if $ F $ carries finite disjoint unions to finite products, i.e., there is an identification
    \begin{equation*}
       \CondAni \simeq \Shhyp(\Extr) \equivalent \Funcross(\Extrop,\Ani)\comma
    \end{equation*}
    where the right hand site denotes finite product-preserving functors. It is a consequence of $\Extr$ being a subsite of projective objects and widely used in the literature on condensed mathematics. A detailed discussion can be, for example, found in \cite[Section A.3.5]{CatrinsThesis}.
\end{recollection}
\begin{recollection}
The \textit{animation} of an ordinary category is defined as the $\infty$-category freely generated under sifted colimits by compact projective objects, see \cite[Section 11.1]{Scholze:analyticnotes}. For example, the \category $\Ani$ of anima (or animated sets) is the animation of the category $\Set$ of sets where the compact projectives are exactly the finite sets, see \cite[Example 11.5]{Scholze:analyticnotes}.
\end{recollection}
\begin{definition}{\textbf{Animation of condensed sets}}\label{anicond}\\
The animation $\Ani(\CondSet)$ of the category $\CondSet$ is the $\infty$-category freely generated under sifted colimits by the category of extremally disconnected profinite sets $\Extr$, see \cref{rem:extdis_projective}. 
Equivalently, it can be defined as the full $\infty$-subcategory of functors in
$$\mathrm{Fun}(\Extr^{\mathrm{op}}, \Ani)$$ generated under sifted colimits by the Yoneda image.
\end{definition}
\begin{remark}
From the description of $\CondAni$ in \cref{rec:fin_prod_pres_presheaves}, it follows that sifted colimits in $\CondAni$ can be computed in the presheaf category $ \Fun(\Extr^{\op},\Ani) $ and that there is an equivalence of $\infty$-categories
$$\CondAni \simeq \Ani(\CondSet)\comma $$
also see \cite[Definition 11.7. and Lemma 11.8]{Scholze:analyticnotes}.
\end{remark}
Condensed sets can be seen as a replacement of topological spaces in the following sense.
\begin{example}\label[example]{ex:top}
To a topological space $T\in \Top$, we associate a condensed set $\underline{T}\in \CondSet$ via the restricted Yoneda embedding
\begin{align*}
\underline{T}\colon\Extr^{\mathrm{op}} &\rightarrow \Set, \
S \mapsto \Hom_{\Top}(S,T)\period
\end{align*}
If $T$ is a topological group, every set $\Hom_{\Top}(S,T)$ inherits the group structure and $\underline{T}$ is a group object in $\CondSet$.
Speaking of a topological space in the context of condensed sets, we will usually mean the condensed set associated to the topological space under this functor.\footnote{Note that there are set-theoretical issues in the definition of condensed objects, see \cref{ignorsetiss}. For that reason this construction will not give a condensed set in the sense of Clausen-Scholze \cite{Scholze:condensednotes} if the space $T$ is not $T_1$. This is also commented in \cite[Section 0.3]{pyknoticI} and \cite[Warning 2.14., Proposition 2.15]{Scholze:condensednotes}.}
\end{example}
\begin{remark}
The functor $\Top \rightarrow \CondSet$ is not fully faithful in general but when restricted to the full subcategory of $ \Top $ spanned by the compactly generated topological spaces, see \cite[Proposition 1.7]{Scholze:condensednotes}.
\end{remark}
The $\infty$-category $\CondAni$ combines the topological space direction of $\CondSet$ with the homotopy theoretical direction of $\Ani$.
\begin{recollection}\label[recollection]{rec:discrete_condani}
The $\infty$-category $\Ani$ embeds fully faithfully into $\CondAni$ by the pullback morphism $(\cdot)^{\mathrm{disc}}\colon \Ani \hookrightarrow \CondAni$ of the unique geometric morphism to the (terminal) $\infty$-topos $\Ani$. 
We refer to condensed anima in the essential image as \textit{discrete} condensed anima.
Its right adjoint is the global sections functor $\mathrm{ev}_*\colon \CondAni \rightarrow \Ani$ given by $A\mapsto A(*)$.
\end{recollection}
\subsubsection{Condensed objects in \texorpdfstring{$\infty$}{infinity}-categories}
The description of condensed anima in \cref{rec:fin_prod_pres_presheaves} extends to a general definition of condensed objects of some \category $\Ccal$.
\begin{definition}\label{def:condobj}
Let $\Ccal$ be an $\infty$-category that admits all finite products. We denote the $\infty$-category of finite product-preserving presheaves $\fromto{\Extr^{\op}}{\Ccal}$ by
$$\Cond(\Ccal)\colonequals \Fun^{\times}(\Extr^{\op}, \Ccal)$$ and refer to its objects as \textit{condensed objects of $\Ccal$}.
\end{definition}
We mainly apply this definition to the ($\infty$-)categories $\Ccal=\Set,\Grp, \Ani$, $\Cat$ and $\Catinfty$.
\begin{warning}\label[warning]{ignorsetiss}
The definition of condensed objects comes with set-theoretic problems due to the size of the category $\Extr$ of extremally disconnected profinite sets. The considered functor categories are not locally small. As we highly rely on the work of Barwick-Glasman-Haine in \cite{Exodromy}, we follow the suggestion of \cite{pyknoticI} to handle these issues: 
We implicitly fix two strongly inaccessible cardinals $ \delta < \epsilon $ and think about $\Cond(\Ccal)$ as hypersheaves of $\epsilon$-small objects in $\Ccal$ on $\delta$-small objects in $\Extr$. However, as these choices do not affect our results, we will not further comment on set-theoretic issues.
\end{warning}
We have a general version of \cref{rec:discrete_condani} for condensed objects of some $\infty$-category $\Ccal$.
\begin{recollection}
For every $\infty$-category $\Ccal$ such that $\Cond(\Ccal)$ is defined, there is an adjoint pair of functors 
\[
 \begin{tikzcd}[column sep = huge]
           \Cond(\Ccal) \arrow[r, shift left=1ex] & \Ccal \arrow[l, hook',shift left=.5ex, swap]
           \end{tikzcd}
   \]
given by the constant sheaf and global sections functors.
For an object $X\in \Ccal$, the image under $\Ccal \hookrightarrow \Cond(\Ccal)$ is the \textit{discrete condensed object} $X^{\mathrm{disc}}$ given by the assignment
 $$S=\{S_i\}_{i\in I}\mapsto X^{\mathrm{disc}}(S)\colonequals \colim_{i \in I} X^{S_i}\comma $$
 where $X^{S_i}$ denotes the product $\prod_{S_i}X$.
 The right adjoint $\Cond(\Ccal) \rightarrow \Ccal, \ X \mapsto X(*)$ sends a condensed object of $\Ccal$ to its \textit{underlying object} in $\Ccal$.
\end{recollection}
The following generalizes the adjoint pair of functors of \cref{rec:fully_faithful_condset_to_condani}.
\begin{recollection}\label[recollection]{rec:adjointcondensed}
If $ \Dcal $ is another \category with finite products and $ F \colon \fromto{\Ccal}{\Dcal} $ is a finite product-preserving functor, we write
    \begin{equation*}
        F^{\cond} \colon \fromto{\Cond(\Ccal)}{\Cond(\Dcal)}
    \end{equation*}
    for the functor given by post-composition with $ F $.
    If $ F \colon \fromto{\Ccal}{\Dcal} $ admits a right adjoint $ G $, then $ G^{\cond} $ is right adjoint to $ F^{\cond} $.
\end{recollection}
\begin{example}\label[example]{ex:condensedhomogrps}
There are \textit{condensed homotopy group functors} on pointed condensed anima
$$\pi_n^{\cond}\colon \CondAni_*\rightarrow \CondGrp$$
defined by level-wise post-composition with the simplicial homotopy group functors on pointed anima
$\pi_n\colon\Ani_*\rightarrow \Grp,$ for all $n\geq 1$.
\end{example}
For $\Ccal=\Catinfty$, we obtain the $\infty$-category $\CondCat$ of \textit{condensed $\infty$-categories}.
\begin{example}
	We can define condensed \categories $ \ICond(\Ani) $ and $ \ICond(\Set) $ by the assignments
	\begin{equation*}
		S \mapsto \CondAni_{/S} \andeq S \mapsto \Cond(\Set)_{/S} \period
	\end{equation*}
\end{example}
Functors of condensed $\infty$-categories are given as follows.
\begin{recollection}\label{rec:contfunc}
Let $\Ccal$ and $\mathcal{D}$ be condensed $\infty$-categories. 
The $\infty$-category $\Functs(\Ccal,\mathcal{D})$ of \textit{continuous functors between condensed $\infty$-categories} is defined as the end of the bifunctor $$(\Extr^{\mathrm{op}})^{\mathrm{op}} \times \Extr^{\mathrm{op}} \rightarrow \Ccal, \ S\times S' \mapsto \Fun(\Ccal(S), \mathcal{D}(S'))\comma $$ 
compare \cite[Definition 13.3.16]{Exodromy}. As proven in \cite[Proposition 2.3]{MR3518559}, this is equivalent to the $\infty$-category $\mathrm{Nat}(\Ccal,\mathcal{D})$ of natural transformations of condensed $\infty$-categories.
We can think of the objects  of $\Functs(\Ccal,\mathcal{D})$ as tuples of functors $\Ccal(S)\rightarrow \mathcal{D}(S)$ indexed by $S\in \Extr$ satisfying certain compatibility conditions. 
\end{recollection}
\subsubsection{The condensed classifying anima}
The inclusion of $\infty$-groupoids into $\infty$-categories
$\Ani \hookrightarrow \Catinfty$ admits both a left adjoint and a right adjoint. 
The left adjoint formally inverts all morphisms (in an $\infty$-categorical sense).
\begin{notation}
	We denote the left adjoint to the inclusion $ \Ani \inclusion \Catinfty $ by $ \Bup \colon \fromto{\Catinfty}{\Ani} $.
	Given \acategory $ \Ccal $, we call $ \Bup\Ccal $ the \defn{classifying anima} of $ \Ccal $.
\end{notation}
As $\Bup\colon \Catinfty \rightarrow \Ani$ preserves finite products, under notice of \cref{rec:adjointcondensed} there exists a corresponding condensed version of this functor.
\begin{definition}\label[definition]{def:cond_classifiying_anima}
We define the \textit{condensed classifying anima} functor 
$$\Bcond\colon \CondCat \rightarrow \CondAni$$
by level-wise post-composition with the classifying anima functor 
$\Bup\colon \Catinfty \rightarrow \Ani.$
More explicitly, the \textit{condensed classifying anima} $$\Bcond\Ccal\in \CondAni$$ of a condensed $\infty$-category $\Ccal$ sends any $S\in \Extr$ to the classifying anima $\Bup\Ccal(S) \in \Ani$ of the $\infty$-category $\Ccal(S) \in \Catinfty$.
\end{definition}
\begin{remark}
The condensed classifying anima functor is the left adjoint to the inclusion $\CondAni \hookrightarrow \CondCat,$ which is defined by post-composition with $\Ani \hookrightarrow \Catinfty$.   
\end{remark}


\subsection{The Galois category of a scheme}\label[subsection]{sec:Comparison_Approaches}
We recall the presentation of the \textit{Galois category of a scheme}, which was introduced in \cite{Exodromy} as a profinite (layered) $\infty$-category. We give its definition as a condensed category. The construction is meaningful in the general setting of coherent \topoi, see also \cite[Chapter 3]{The_condensed_homotopy_type}. 

\begin{recollection}\label{rec:coherent_is_constructible}
    Let $ X $ be a qcqs scheme.
    Then the \'etale \topos $ X_{\et} $ is coherent \cite[Proposition 3.7.3]{Exodromy} and by \cite[Lemma 9.5.3 \& Proposition 9.5.4]{Exodromy}, the truncated coherent objects of $ X_{\et} $ are exactly the constructible étale sheaves of anima on $ X $.
\end{recollection}

\begin{definition}\label[definition]{def:Gal}
Let $X$ be a qcqs scheme.
The \defn{Galois \category} of $ X$ is the condensed \category$ \Gal(X) $ defined by the functor
    \begin{align*}
        \Extr^{\op} &\to \Catinfty, \ 
        S \mapsto \Fun^{*,\coh}(X_{\et},\Sh(S)) \period
    \end{align*}
    Here, $ \Fun^{*,\coh}(X_{\et},\Sh(S)) $ is the \category of \defn{coherent} algebraic morphisms $ \supperstar \colon X_{\et} \to \Sh(S) $ of \topoi. These are those algebraic morphisms that send constructible \'etale sheaves of anima on $ X $ to locally constant constructible sheaves of anima on the topological space $ S $. 
\end{definition}
\begin{remark}
    Originally, the Galois category is defined as the stratified homotopy type of the \'etale topos stratified over the Zariski space of the scheme \cite[Definition 12.1.3]{Exodromy}. This description will be useful later in \cref{prop:Dedekind_domain}.
    Our description appears roughly in \cite[Example 13.5.4]{Exodromy}.
    There pushforward functors are considered and the coherent assumption is missing. This is corrected in \cite[Definition 3.24]{The_condensed_homotopy_type}.
\end{remark}
\begin{remark}
    Let $X$ be a qcqs scheme.
    As the \topos $X_{\et}$ is $1$-localic (i.e., the underlying \'etale site is an $1$-category), by \cite[Lemma 6.4.5.6]{HTT} we can identify $$\Gal(X)(S)=\Fun^{*,\coh}(X_{\et},\Sh(S))=\Fun^{*,\coh}(\Sh(X_{\et},\Set),\Sh(S,\Set))\in \Cat.$$ We can thus work with coherent algebraic morphisms on the level of $1$-topoi. These send constructible \'etale sheaves of sets to locally constant constant constructible sheaves of sets on $S$. 
\end{remark}
Referring to the previous remark, we can view $\Gal(X)$ as a condensed $1$-category.
The underlying category of $\Gal(X)$, i.e., the global sections $ \Gal(X)(\pt)\in \Cat $, recovers the category of \textit{points} $ \Pt(X_{\et}) $ of the étale topos of $ X $. We recall the definition of a point of an $\infty$-topos.
\begin{definition}
A \textit{point} of an $\infty$-topos $\mathcal{X}\in \RTop_{\infty}$ is a geometric morphism $p_*\colon \Ani \rightarrow \mathcal{X}$.
The \textit{$\infty$-category of points} of $\mathcal{X}$ is the opposite category of such functors $$\Pt(\mathcal{X})\colonequals \Fun_*(\Ani, \mathcal{X})^{\op}=\Fun^*(\mathcal{X}, \Ani)\period$$
\end{definition}
\begin{remark}
Let $X$ be a qcqs scheme. Due to $1$-locality of the \'etale $\infty$-topos, the $\infty$-category $\Pt(X_{\et})$ of points of the \'etale $\infty$-topos $X_{\et}$ is a $1$-category.    
\end{remark}
\begin{lemma}
Let $X$ be a qcqs scheme.
There is an equivalence of categories
$$\Pt(X_{\et})\simeq\Gal(X)(*)\period$$
\end{lemma}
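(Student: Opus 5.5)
Evaluating at $S=\ast$ gives $\Gal(X)(\ast)=\Fun^{*,\coh}(X_{\et},\Sh(\ast))=\Fun^{*,\coh}(X_{\et},\Ani)$, since $\Sh(\ast)\simeq\Ani$. By definition $\Pt(X_{\et})=\Fun^*(X_{\et},\Ani)$, and $\Fun^{*,\coh}$ is by construction a full subcategory of $\Fun^*$. So the plan is to show that this inclusion is essentially surjective: every algebraic morphism $p^*\colon X_{\et}\to\Ani$ is automatically coherent.

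For this I would first unwind the coherence condition for $S=\ast$. Locally constant constructible sheaves of anima on the one-point space are just the $\pi$-finite anima, i.e. the objects of $\Anifin$. By \cref{rec:coherent_is_constructible}, the constructible étale sheaves are the truncated coherent objects of $X_{\et}$. So it suffices to prove that $p^*$ sends every truncated coherent object $F\in X_{\et}$ to a $\pi$-finite anima.

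There are two ways to see this. The cleanest is to cite the general fact (\cite[Lemma 9.5.3 \& Proposition 9.5.4]{Exodromy}, or the analogous statements in \cite{SAG}, Appendix A) that a point of a coherent topos is automatically a coherent geometric morphism. Alternatively, one can argue directly using $1$-locality. Any point of $X_{\et}$ factors, up to equivalence, through the fiber functor at a geometric point $\xbar\to X$, so $p^*F=F_{\xbar}$ is a stalk. A constructible sheaf has stalks which are $\pi$-finite anima: it is truncated, and on each stratum of a finite constructible stratification it is locally constant with $\pi$-finite values. So the stalk at $\xbar$ is $\pi$-finite.

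The main obstacle is justifying that every point is a stalk functor at a geometric point, or equivalently that points of a coherent $\infty$-topos automatically preserve truncated coherent objects. To handle it, I would reduce to the $1$-topos $\Sh(X_{\et},\Set)$ via the $1$-locality remark above. There the classical description of points of the étale topos (SGA4, VIII 7.9) applies, and truncated objects are controlled by their homotopy sheaves, which are constructible sheaves of sets and groups. Their stalks are finite, and this gives $\pi$-finiteness.
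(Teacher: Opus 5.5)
Your reduction is the same as the paper's: show that every algebraic morphism $X_{\et}\to\Ani$ is automatically coherent. You should drop the ``cleanest'' route, though, because the general fact it relies on is false. A point of a coherent topos need not be coherent, and \cite[Lemma 9.5.3 \& Proposition 9.5.4]{Exodromy} say nothing about points; they only identify the truncated coherent objects of $X_{\et}$ with constructible sheaves. For a counterexample, take the object classifier $\Fun(\Set_{\fin},\Set)$. It is coherent, being presheaves on $\Set_{\fin}^{\op}$, which has finite limits. The generic object (the inclusion functor) is a coherent representable, and the point corresponding to an infinite set $A$ sends it to $A$. So something specific to $X_{\et}$ has to enter. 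In the paper this is spectrality: $X_{\et}$ and $\Ani$ are spectral $\infty$-topoi, so by \cite[Proposition 9.5.7]{Exodromy} coherence can be read off from the induced map of underlying spectral spaces, and $\ast\to|X|$ is trivially quasi-compact.

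Your direct route is correct and genuinely different from the paper's. By $1$-localicity \cite[Lemma 6.4.5.6]{HTT}, points of $X_{\et}$ are the same as points of $\Sh(\Et{X},\Set)$. By \cite[Expos\'e VIII, Th\'eor\`eme 7.9]{SGA4ii} these are equivalent to stalk functors at geometric points; ``equivalent to'' is the right phrasing, not ``factor through''. The $\infty$-point is then the $\infty$-stalk, and $\pi_n(F_{\xbar})\simeq(\pi_n F)_{\xbar}$ since $p^*$ is left exact. Constructible sheaves of sets and groups have finite stalks, so $F_{\xbar}$ is $\pi$-finite. It is even simpler to use the paper's remark that coherence may be checked on $1$-topoi; then you only need stalks of constructible sheaves of sets to be finite.

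Your approach gives a concrete, elementary explanation: the content is exactly that every point is a geometric point and constructible sheaves have finite stalks. The price is the SGA 4 classification of points. The paper's one-line argument instead uses the heavier exodromy machinery on spectral $\infty$-topoi. In return it is structural, and it is the same criterion that governs coherence at general $S\in\Extr$, where quasi-compactness of $S\to|X|$ is a real condition.
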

\begin{proof}
We need to show that all algebraic morphisms from $X_{\et}$ to $\Ani$ are automatically coherent.
This follows from \cite[Proposition 9.5.7]{Exodromy}, as $\Ani$ and $X_{\et}$ are spectral $\infty$-topoi and every morphism $*\rightarrow |X|$ to the underlying Zariski space $|X|$ of $X$ is quasi-compact.
\end{proof}
For every qcqs scheme $X$, there is an explicit description of the category of points $\Pt(X_{\et})$ in terms of geometric points of $X$ and their lifts, which was already presented by the Grothendieck school in \cite[Expos\'e VIII, Th\'eor\`eme 7.9]{SGA4ii}, also see \cite[ Construction 0.0.1., Construction 0.0.2., Construction 12.1.5]{Exodromy}:
\begin{recollection}{\textbf{Points of the \'{e}tale topos}}\label[remark]{rem:Pointsetaletopos}\\
Let $X$ be a qcqs scheme.
The category of points $\Pt(X_{\et})$ of the \'{e}tale ($\infty$-)topos of $X$ can be characterized as follows: 
\begin{itemize}
\item \textbf{Objects are given by geometric points:}\\
A geometric point $\xbar \rightarrow X$ corresponds to a scheme theoretic point $\Spec(\kappa(x))\rightarrow X$ where $\kappa(x)$ is a separable closure of the residue field $\kappa(x_0)$ of its image $x_0 \in |X|$.
\item \textbf{Morphisms are given by lifts of geometric points to strict localizations:}\\
For $\xbar\rightarrow X$ and $\ybar\rightarrow X$ geometric points, a morphism $\xbar\rightarrow \ybar$ is defined as a morphism $\ybar\rightarrow X_{(x)}\colonequals \Spec(\mathcal{O}_{X,x}^{\mathrm{sh}})$ such that the triangle commutes:
\[
  \begin{tikzcd}
   \Spec(\kappa(y)) \arrow[dotted]{r} \arrow[swap]{dr} & \Spec(\mathcal{O}_{X,x}^{\mathrm{sh}}) \arrow{d} \\
     & X
  \end{tikzcd}
\]
\end{itemize}
\end{recollection}
Here, the prime spectrum of the strict henselization $\Spec(\mathcal{O}_{X,x}^{\mathrm{sh}})$ of the local ring $\mathcal{O}_{X,x}$ is also called \textit{strict localization} of $X$ at the geometric point $\xbar$.
\begin{recollection}{\textbf{Morphisms of points}}\label[remark]{rem:Pointsetaletopos2}\\
 Let $\xbar\rightarrow X$ and $\ybar\rightarrow X$ be geometric points corresponding to points $x_0$ and $y_0$ in $|X|$ and $\Hom_{\Pt(X_{\et})}(\xbar,\ybar)$ the set of morphisms between $\xbar$ and $\ybar$ in $\Pt(X_{\et})$.
\begin{itemize}
\item[1.] The set of morphisms $\Hom_{\Pt(X_{\et})}(\xbar,\ybar)$ is non-empty if and only if $x_0$ is a specialization of $y_0$ in $|X|$, i.e., $x_0$ is in the closure of $y_0$.
\item[2.] The morphism set $\Hom_{\Pt(X_{\et})}(\xbar,\xbar)$ of a geometric point is the absolute Galois group of $\kappa(x_0)$ defined as the Galois group $\Gal(\kappa(x)/\kappa(x_0))$ of the separable closure $\kappa(x)$.
\end{itemize}
\end{recollection}
\begin{remark}\label[remark]{rem:points_strictlyhenselian}
The category $\Pt(X_{\et})$ equivalently is the category of strictly henselian local rings lying above $X$. This follows by identifying a geometric point $\xbar\rightarrow X$ with the strict localization $\Spec(\mathcal{O}_{X,x}^{\mathrm{sh}})$ of $X$ at the geometric point $\xbar$. This result is classical, for a reference see, e.g., \cite{GABBER20154667}.
\end{remark}
We prove a generalization of this remark for $\Gal(X)$ in \cref{sec:via_weakly_proobjects}.
\begin{recollection}\label[recollection]{rec:Zariskiposet}
The underlying Zariski space $|X|$ of a scheme $X$ carries the structure of a poset via the partial order induced by specializations.
For two points $x, y$ of a topological space we say $x\leq y$ if and only if $x$ is a specialization of  $y$, i.e., $x$ lies in the closure of $y$. We denote the \textit{Zariski (or specialization) poset} of the underlying Zariski space $|X|$ of a scheme $X$ by $ \Zpos{X} $.
\end{recollection}
\begin{example}\label[example]{ex:Gal(X)_is_a_stratified_anima}
    For every qcqs scheme, there is a natural functor
    \begin{equation*}
        s \colon \Gal(X)(\pt) \to \Zpos{X}
    \end{equation*}
    from the category of points of the étale topos to the specialization poset of $ |X| $.
    It sends a geometric point $ \xbar \to X $ to the underlying point $ x_0 \in |X| $ and a morphism $\xbar\rightarrow \ybar$ of geometric points to the order $x_0\leq y_0$ as every morphism in $\Gal(X)(*)=\Pt(X_{\et})$ corresponds to a specialization.

\end{example}
\subsection{The condensed homotopy type of a scheme}\label[section]{subsec:condensed_homotopy_type}
The condensed classifying anima of the Galois category defines the \textit{condensed homotopy type} of a scheme. This object was first mentioned in \cite{Exodromy} and studied in more detail in \cite{The_condensed_homotopy_type}.
\begin{definition}
    Let $X$ be a qcqs scheme. We refer to the condensed classifying anima 
    $$\CondShape{X}\colonequals\Bcond\Gal(X)\in \CondAni$$ of the Galois category of $X$ as the \textit{condensed homotopy type of $X$.}
\end{definition}
\begin{recollection}
The assignment $X\mapsto \CondShape{X}$ defines a functor 
\begin{align}\label{fun:condensed_shape}
\Shape^{\cond} \colon \Sch_{\mathrm{qcqs}}\rightarrow \CondAni
\end{align}
from the category of qcqs schemes to the $\infty$-category of condensed anima. 
\end{recollection}
\begin{recollection}\label[recollection]{rec:recover_etale_homotopy}
The condensed homotopy type of a qcqs scheme $X$ can be seen as a condensed refinement of the \'etale homotopy type $\Piet(X)\in \ProAni$ of $X$. To be more precise, the \'etale homotopy type $\Piet(X)$ is, up to protruncation, the image of $\CondShape{X}\in \CondAni$ under the \textit{prodiscretization} functor $(-)\prodisccompl\colon\CondAni \rightarrow \ProAnitrun$, see \cite[Lemma 3.14]{The_condensed_homotopy_type}.
\end{recollection}
\begin{remark}\label[remark]{rem:relative_shape}
Similar as $\Piet(X)$ can be defined as the \textit{shape} of the \'etale $\infty$-topos relative to $\Ani$, see \cite[Corollary 5.6]{HoyoisGalois}, the condensed homotopy type finds a description as the shape of the (hypercomplete) pro-\'etale $\infty$-topos $\Xproethyp\colonequals\Shhyp(\ProEt{X})$ of $X$ relative to $\CondAni$.    
We omit details on this perspective here as our studies totally rely on the definition of $\CondShape{X}$ via $\Gal(X)$. For details, consult \cite[Sections 2.2.1 and 2.2.3]{CatrinsThesis} or \cite[Section 3.1, Proposition 3.38]{The_condensed_homotopy_type}.
\end{remark}
\begin{recollection}
Fixing a geometric point $\xbar\rightarrow X$ determines a point $\xbar\colon \ast\rightarrow \CondShape{X}$. Thus, for every pointed scheme, we can define \textit{condensed homotopy groups} by composition of the pointed version of (\ref{fun:condensed_shape}) with the condensed homotopy group functors on condensed anima (\cref{ex:condensedhomogrps}) giving functors
$$\pi_n^{\cond}\colon \Sch_{\mathrm{qcqs},
\ast}\rightarrow \Cond(\Ccal),\ (X,\xbar) \mapsto \pi_n^{\cond}(\CondShape{X},\xbar)\comma $$
where $\Ccal=\Grp$ for $n=1$ and $\Ccal=\Ab\Grp$ for $n\geq 2$. 
\end{recollection}
\begin{recollection}
Composition of (\ref{fun:condensed_shape}) with the functor $\pi_0\colon\CondAni\rightarrow \CondSet$ defines a \textit{condensed connected components} functor
$$\pi_0^{\cond}\colon \Sch_{\mathrm{qcqs}}\rightarrow \CondSet, \ X \mapsto \pi_0(\CondShape{X})\period$$
\end{recollection}
\begin{remark}\label[remark]{rem:conservative_fundamental_groups}
As stated in \cite[Example 2.4.7]{pyknoticI}, the functors $\pi_n^{\cond}$ and $\pi_0^{\cond}$ are collectively conservative: A morphism of condensed anima $X\rightarrow Y$ is an equivalence if and only if for every $n\geq 0$, the corresponding morphism on $\pi_n^{\cond}$ is an isomorphism in condensed sets, condensed groups or condensed abelian groups, respectively.
\end{remark}
Recall that for every qcqs scheme $X$ the set of connected components $\pi_0(X)$ (of its underlying Zariski space) carries a profinite topology via the quotient map $X\rightarrow \pi_0(X)$, see \stacks{0900}.
\begin{remark}
Let $X$ be a qcqs scheme. The \textit{condensed set of connected components} $\pi_0^{\cond}(X)$ only relies on the spectral space $|X|$ and does, in general, not coincide with the profinite set $\pi_0(X)$ of connected components. 
It is rather dependent on irreducible components than connected components of the scheme. Counterexamples arise from connected schemes with infinitely many irreducible components, see \cite[Section 4.2]{The_condensed_homotopy_type} and \cite[Example 2.3.33]{CatrinsThesis}.
\end{remark}

%

\newpage
We move on to the main part of this paper. It is structured as follows: After giving an alternative description of $\Gal(X)$ using work by Bhatt-Scholze and Lurie in \cref{sec:via_weakly_proobjects}, we provide a collection of (non-)examples of \textit{condensed contractible} schemes including the computation of the (underlying) condensed fundamental group of a general Dedekind domain in \cref{sec:condensed_contractible}, before we close with a classification of schemes having Galois categories with a terminal or an initial object in \cref{sec:Trivial_Shapes}.
\section{An alternative description of the Galois category}\label[section]{sec:via_weakly_proobjects}
The purpose of this section is to provide an alternative and more extensive description of the Galois category $\Gal(X)$ of a qcqs scheme $X$ that supports the idea of $\Gal(X)$ being a condensed and topos theoretic enrichment of the concept of (geometric) points of a scheme.
In particular, in \cref{prop:Galois_weaklycontractible}, we will show for every \textit{affine} scheme $X$ and $S\in \Extr$ the identification
    \begin{align}\label{eq:alternative_Gal}
    \Gal(X)(S) &\simeq \{\text{$S$-connected $w$-contractible rings over $X$ + $S$-connected maps}\} \period
  \end{align}
Here, $S$-connected means that we only consider $w$-contractible rings with $\pi_0(\Spec(A))=S$ and morphisms of rings $A\rightarrow B$ such that $\Spec(B) \rightarrow \Spec(A)$ induces the identity on $\pi_0$. This extends \cref{rem:points_strictlyhenselian} as a $w$-contractible ring is connected iff it is a strictly henselian local ring.
\subsection{Recollection on weakly contractible schemes}\label[section]{sec:weaklycontractible}
We need the following selection of definitions, mainly taken from \cite{MR3379634}.
\begin{definition}{\cite[Definition 2.1.1, Lemma 2.1.4]{MR3379634}}
Let $X$ be a spectral space. We call $X$ \textit{$w$-local} if it satisfies:
\begin{enumerate}
    \item The subspace $X_{\mathrm{cl}}\subset X$ of closed points is closed.
    \item Every connected component has a unique closed point.
\end{enumerate}
\end{definition}
\begin{remark}\label[remark]{rem:homeomorph_connectedcomp} Let $X$ be a $w$-local spectral space.
   The composition $X_{\mathrm{cl}}\rightarrow X\rightarrow \pi_0(X)$, where $\pi_0(X)$ carries its profinite topology, is a homeomorphism, see \cite[Lemma 2.1.4]{MR3379634}. 
\end{remark}
\begin{definition}\label[definition]{def:wlocal}
Let $X$ be a qcqs scheme. 
\begin{enumerate}
    \item We call $X$ \textit{$w$-local} if its underlying spectral space is $w$-local.
    \item We call $X$ \textit{$w$-strictly local} if it is $w$-local and every \'etale surjection $U\twoheadrightarrow X$ admits a section. 
\end{enumerate}
\end{definition}
\begin{remark}\label[remark]{rem:localring_connectedcomp}
The connected components of a $w$-local qcqs scheme $X$ correspond to local rings at closed points: By the assumption on every connected component to have a unique closed point, every other point in the connected component needs to specialize to this closed point. The local ring at the closed point is exactly the subscheme of its generalizations \stacks{01J7}.
\end{remark}
\begin{remark}
    As observed in \cite[Remark 2.45]{The_condensed_homotopy_type}, every $w$-strictly local qcqs scheme is affine as it is a retract of an affine scheme.
\end{remark}
\begin{remark}
A $w$-strictly local qcqs scheme equivalently is a $w$-local qcqs scheme such that all local rings at closed points are strictly henselian \cite[Lemma 2.2.9]{MR3379634}.
\end{remark}
\begin{definition}
Let $X$ be a qcqs scheme. An affine scheme $Y$ in the pro-\'etale site $\ProEt{X}$ is \textit{weakly contractible} if every pro-\'{e}tale covering morphism, i.e., every faithfully flat weakly \'etale map $V\rightarrow Y \in \ProEt{X}$, admits a section.   
\end{definition}
\begin{definition}
    A ring $A$ is \textit{$w$-contractible} if every faithfully flat ind-etale map $A\rightarrow B$, i.e.,  $B$ is a filtered colimit of \'{e}tale $A$-algebras, has a retraction. 
\end{definition}
\begin{recollection}\label[recollection]{rec:wstrictly_wcontractible}
Weakly contractible affine schemes identify with $w$-contractible rings. Moreover for $A$ $w$-contractible, $\Spec(A)$ is $w$-strictly local, \cite[Lemma 2.4.2]{MR3379634}, and a $w$-strictly local affine scheme $Y$ is weakly contractible if and only if $\pi_0(Y)\in \Extr$, see \cite[Lemma 2.5.8]{MR3379634}.
\end{recollection}
We give a simple example on how to create from a local ring $A$ and profinite set $S$ a $w$-strictly local affine scheme whose connected components are exactly given by $S$. If $S\in \Extr$, then this affine scheme is even weakly contractible.
\begin{example}\label[example]{ex:stricthensel_wstrictly}
    Let $X=\Spec(A)$ be the affine scheme of a local ring $A$.
    For every profinite set $S=\lim_{i \in I} S_i$ the scheme
    $$X_S\colonequals \lim_{i \in I}\coprod_{s\in S_i} X^{\mathrm{sh}}\comma $$ where $X^{\mathrm{sh}}=\Spec(A^{\mathrm{sh}})$ is the spectrum of the strict henselization of $A$, is a $w$-strictly local scheme above $X$ satisyfying $\pi_0(X_S)=S$. Indeed, every strictly henselian local ring defines a $w$-strictly local scheme with a unique connected component.
    As $w$-local spaces are closed under finite disjoint unions and limits \cite[Example 2.1.2., Lemma 2.1.9]{MR3379634}, the underlying spectral space of $X_S$ is $w$-local.
    Moreover, it is $\pi_0(X_S)=S$ by construction and every connected component corresponds to a copy of $X^{\mathrm{sh}}$.
    The closed subspace of closed points is then given by the scheme
    $$X_S^c= \lim_{i \in I} \coprod_{s\in S_i} \Spec(\kappa(m)^{\mathrm{sep}})\comma $$ where $\kappa(m)^{\mathrm{sep}}$ is a separable closure of the residue field at the maximal ideal $m\subset A$.
\end{example}
For a qcqs scheme $X$ and some $S\in \Extr$, every point $x\in X$ with local ring $\mathcal{O}_{X,x}$ gives rise to such a weakly contractible scheme. They are examples for \textit{$S$-connected weakly contractibles}. 
\begin{definition}\label[definition]{def:S_connected_weaklycontr}
Let $Y=\Spec(A)$ be a weakly contractible affine scheme.
\begin{enumerate}
    \item We call $Y$ an \textit{S-connected} weakly contractible object if $\pi_0(Y)=S\in \Extr$.
    \item  A morphism $Y\rightarrow Z$ of two $S$-connected weakly contractible affine schemes is called \textit{$S$-connected} if it induces the identity on $\pi_0$.
    \item For a given qcqs scheme $X$, we denote the category of \textit{$S$-connected weakly contractible affine schemes with $S$-connected maps over $X$}, i.e., with a structure map to $X$, by ${X_S^{\mathrm{wc}}}$.
\end{enumerate}
\end{definition}
\begin{example}
    The $\ast$-connected weakly contractible affine schemes identify with the affine schemes corresponding to strictly henselian local rings: a connected weakly contractible affine has a unique closed point with strictly henselian local ring which already forms the whole ring.
\end{example}
\begin{lemma}\label[lemma]{lem:wstrictly_coherentfunctor}
Let $X$ be a qcqs scheme and $Y\rightarrow X$ a weakly contractible affine scheme with $\pi_0(Y)=S$. The assignment $$(f\colon Y_{\mathrm{cl}} \subset Y \rightarrow X) \mapsto (f^*\colon \Sh(\Et{X}, \Set) \rightarrow \Sh(\Et{Y_{\mathrm{cl}}}, \Set))$$ defines a functor
$$G\colon ({X_{S}^{\mathrm{wc}}})^{\op} \rightarrow \Gal(X)(S)=\Fun^{\ast,\mathrm{coh}}(\Sh(\Et{X},\Set),\Sh(S,\Set))\period$$  
\end{lemma}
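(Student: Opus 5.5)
To prove \cref{lem:wstrictly_coherentfunctor}, I would first identify the étale topos of the closed subscheme $Y_{\mathrm{cl}}$ with $\Sh(S,\Set)$. Then I would check that pullback along $Y_{\mathrm{cl}}\to X$ is a coherent algebraic morphism. Finally I would check functoriality in $S$-connected maps.

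\textbf{Step 1.} Since $Y$ is $w$-strictly local, \cref{rem:homeomorph_connectedcomp} shows that $Y_{\mathrm{cl}}\to\pi_0(Y)=S$ is a homeomorphism. Moreover, $Y_{\mathrm{cl}}$ with its reduced structure is a closed subscheme whose local rings are the residue fields at closed points. These are separably closed, because the local rings of $Y$ at closed points are strictly henselian. So $Y_{\mathrm{cl}}$ is a zero-dimensional affine scheme with profinite underlying space and separably closed residue fields. For such a scheme, every étale map is Zariski-locally a finite disjoint union of open immersions. I would prove this by reducing to the residue fields and spreading out, or by citing \cite[Lemma 2.2.8]{MR3379634}. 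Hence the restriction $\Sh(\Et{Y_{\mathrm{cl}}},\Set)\to\Sh(|Y_{\mathrm{cl}}|,\Set)=\Sh(S,\Set)$ is an equivalence. I expect this identification to be the main technical point. The cleanest route is to check it on stalks: every geometric point of $Y_{\mathrm{cl}}$ is an honest point with trivial Galois group, and the small étale site has a basis of clopen subsets.

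\textbf{Step 2.} The map $f$ gives a geometric morphism whose algebraic part is
\[
f^*\colon \Sh(\Et{X},\Set)\to \Sh(\Et{Y_{\mathrm{cl}}},\Set)\simeq\Sh(S,\Set).
\]
This pullback is left exact and colimit preserving, so it is an algebraic morphism. For coherence, pullback of étale sheaves preserves constructibility, so $f^*$ of a constructible sheaf is constructible on $Y_{\mathrm{cl}}$. Under the equivalence of Step 1, a constructible sheaf on a profinite space is locally constant with finite stalks. Indeed, constructible strata of a profinite set are clopen, so any finite constructible stratification can be refined to a clopen partition on which the sheaf is constant. Thus $f^*$ lands in locally constant constructible sheaves, as \cref{def:Gal} requires.

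\textbf{Step 3.} Let $g\colon Y\to Z$ be an $S$-connected map over $X$. Since $g$ induces the identity on $\pi_0$ and closed points correspond bijectively to $\pi_0$, I need $g$ to send closed points to closed points. A point of $Y$ lies over a connected component of $Z$, which is the local scheme at its closed point $z$. The image of a closed point $y$ therefore specializes to $z$. I would argue that $g(y)=z$ by using $w$-locality, that is, closedness of the locus of closed points and the homeomorphism with $\pi_0$. If this fails in general, I would instead use that the pullback of sheaves only depends on the induced map on strict henselizations. Either way, $g$ restricts to a map $Y_{\mathrm{cl}}\to Z_{\mathrm{cl}}$ over $X$ that is the identity on $S$. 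This yields a natural transformation $f_Z^*\Rightarrow f_Y^*$ via $f_Y^*\simeq g^*f_Z^*$ and $g^*\simeq\mathrm{id}$ on $\Sh(S,\Set)$. Compatibility with composition follows from the pseudofunctoriality of pullback, and since all categories involved are $1$-categories, this gives the contravariant functor $G$.
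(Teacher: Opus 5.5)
Your Steps 1 and 2 are fine and follow the paper's outline. The paper identifies $\Sh(\Et{Y_{\mathrm{cl}}},\Set)\simeq\Sh(S,\Set)$ via Schr\"oer's result that the \'etale and Zariski topoi agree on everywhere strictly local schemes, and gets coherence from the general fact that maps of qcqs schemes induce coherent geometric morphisms. Your hands-on substitutes are acceptable: spreading out $\kappa(x)^n$ over clopen neighbourhoods, and observing that constructible strata of a profinite space are clopen.

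The gap is in Step 3. An $S$-connected map $g\colon Y\to Z$ need not send $Y_{\mathrm{cl}}$ into $Z_{\mathrm{cl}}$. Neither $w$-locality nor your fallback via strict henselizations forces this; Bhatt--Scholze treat ``$w$-local maps'' as a separate notion precisely because it fails in general. For a counterexample, take a strictly henselian discrete valuation ring $R$ with fraction field $K$ (e.g.\ $R=\CC[[t]]$), $X=Z=\Spec(R)$, $Y=\Spec(K^{\mathrm{sep}})$ (ind-\'etale over $R$), and $S=\ast$. Both are $\ast$-connected weakly contractible, and the structure map $g\colon Y\to Z$ is a morphism in $X^{\mathrm{wc}}_{\ast}$. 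Yet $g$ sends the unique point of $Y$ to the generic point of $Z$.

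Your recipe also fails when $g$ does preserve closed points. In that case $f_Y^*\simeq g^*f_Z^*\simeq f_Z^*$, so $G(g)$ would always be an isomorphism. But $G$ must produce non-invertible morphisms. In the example, $G(g)$ should be the specialization morphism $\bar s\to\bar\eta$ in $\Pt(X_{\et})$, which is not invertible since $\Hom(\bar\eta,\bar s)=\emptyset$; and by \cref{prop:Gal_wcontrrings}, $G$ is even an equivalence. The non-invertible morphisms of $\Gal(X)(S)$ come exactly from maps that move closed points to non-closed points.

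The missing ingredient is to compute sections of $f_Y^*F$ on $Y$ itself rather than on $Y_{\mathrm{cl}}$. Let $U\subseteq S$ be clopen, with corresponding clopens $Y_U\subseteq Y$ and $Z_U\subseteq Z$, and let $F\in\Sh(\Et{X},\Set)$. Then
$\Gamma(U,f_Y^*F)\cong\Gamma(Y_U,F|_{Y_U})$,
because sections of an \'etale sheaf over a $w$-strictly local affine are determined on its closed points. This is the input the paper uses in the proof of \cref{prop:Gal_wcontrrings}. Stalkwise it just says $(f_Y^*F)_s\cong\Gamma(Y_s,F|_{Y_s})$ for the strictly henselian connected component $Y_s$.

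Since $g$ is the identity on $\pi_0$, we have $g^{-1}(Z_U)=Y_U$. Restriction along $g$ then gives maps $\Gamma(Z_U,F)\to\Gamma(Y_U,F)$ that are compatible with shrinking $U$ and natural in $F$. As clopens form a basis of $S$, this defines a natural transformation $f_Z^*\Rightarrow f_Y^*$. Compatibility with composition is transitivity of restriction; equivalently, it is Yoneda applied to $\Gamma_S\circ G(Y)\simeq\Hom(Y,-)$. At $s\in S$ this is the specialization map $F_{\bar z_s}\to F_{\bar y_s}$ determined by $g|\colon Y_s\to Z_s$, i.e.\ a morphism of points as in \cref{rem:Pointsetaletopos}.

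For what it is worth, the paper's own proof handles this step in one sentence (``post-composition with $g^*$''), which glosses over the same point. The substance lies in the identification above.
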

\begin{proof}  
    The closed subset of closed points $Y_{\mathrm{cl}}\subset Y$ equipped with the induced reduced subscheme structure inherits the property of being qcqs. (Indeed, it is the affine subscheme obtained by cutting out the Jacobsen radical of the ring underlying $Y$, see \cite[Lemma 2.2.3]{MR3379634}.)
    Morphisms between qcqs schemes induce coherent morphisms on topoi, compare \cite[Example 2.22]{Haine:1-localic}.
    Thus the by $Y_{\mathrm{cl}}\hookrightarrow Y \rightarrow X$ induced pullback morphism $f^*$ on sheaf topoi is coherent.
    We argue why $\Sh(\Et{Y_{\mathrm{cl}}}, \Set)\simeq \Sh(S,\Set)$.
    By construction, $Y_{\mathrm{cl}}$ is an everywhere strictly local scheme meaning that the local ring at every geometric point is strictly henselian.\footnote{The notion of an \textit{everywhere strictly local} scheme will come up again later in \cref{sec:Trivial_Shapes}, see \cref{def:everywhere_strictly:local}.}
    For such a scheme, the \'{e}tale and the Zariski topoi agree by \cite[Corollary 2.5]{MR3649361}. Moreover, we have a homeomorphism $Y_{\mathrm{cl}}\simeq \pi_0(Y)=S$ of topological spaces by \cref{rem:homeomorph_connectedcomp}.
    This implies that the Zariski topos on $Y_{\mathrm{cl}}$ is equivalent to the sheaf topos on $S$.
    A morphism $V\rightarrow Y \in {X_{S}^{\mathrm{wc}}}$ gives a morphism $g\colon S=\pi_0(V) \rightarrow \pi_0(Y)=S$ that is by the definition of morphisms in ${X_{S}^{\mathrm{wc}}}$ the identity on $S$. This morphism induces a coherent algebraic morphism $$g^*\colon \Sh(\Et{Y_{\mathrm{cl}}}, \Set)\simeq \Sh(S,\Set)\rightarrow \Sh(S,\Set)\simeq \Sh(\Et{V_{\mathrm{cl}}}, \Set)\period $$  
    Post-composition with $g^*$ describes the natural transformation between the functors $$f_1^*,
    f_2^*\colon \Sh(\Et{X}, \Set)\rightarrow \Sh(S,\Set)$$ which are induced by the morphisms $f_1\colon Y_{\mathrm{cl}} \hookrightarrow Y \rightarrow X$ and $f_2 \colon V_{\mathrm{cl}} \hookrightarrow V \rightarrow X$.
\end{proof}
\subsection{Relation to Lurie's ultracategories}
We recall work on ultracategories by Lurie \cite{Ultracategories} in order to connect it to $\Gal(X)$. More precisely, it provides a description for the functor category $\Fun^*(\Sh(\Et{X},\Set),\Sh(S,\Set))$ of \textit{all} pullback functors from the \'etale topos of a qcqs scheme $X$ to the sheaf topos of a profinite set $S$. We will use this in the next subsection to show the claimed statement (\ref{eq:alternative_Gal}) on the full subcategory  of coherent functors $$\Gal(X)(S)\hookrightarrow\Fun^*(\Sh(\Et{X}, \Set),\Sh(S,\Set))\period $$
\begin{recollection}
Recall the notion of a pretopos as an exact and extensive category, see, e.g., \cite[Definition A.4.1]{Ultracategories}.
It presents a more general variant of a topos where we only ask for the existence of finite limits and colimits.
For $\Ccal$ and $\Dcal$ pretopoi, a functor $F \colon \Ccal \rightarrow \Dcal$ is a pretopos functor if it preserves finite limits, finite coproducts, and effective epimorphism.
We let $\Fun_{\mathrm{pre}}(\Ccal,\Dcal)\subset \Fun(\Ccal, \Dcal)$ denote the full subcategory spanned by the pretopoi functors.
\end{recollection}
\begin{recollection}
Let $X$ be a qcqs scheme.
The subcategory of coherent objects in the coherent topos $\Sh(\Et{X}, \Set)$ is a small pretopos \cite[Corollary C.5.14]{Ultracategories}. We denote by $X_{\et}^{\coh}$ this pretopos endowed with the coherent topology and have $\Sh(X_{\et}^{\coh})=\Sh(\Et{X}, \Set)$, see \cite[Proposition C.6.4, Theorem C.6.5]{Ultracategories}.
Moreover, for every topos $\mathcal{D}$ there is an equivalence 
\begin{align}\label{eq:pre_topoi_morphisms}
\Fun_{\mathrm{pre}}(X_{\et}^{\coh}, \mathcal{D})\simeq \Fun^{*}(\Sh(\Et{X},\Set),\mathcal{D})\period
 \end{align}
of pretopoi morphisms and algebraic morphisms of topoi, see \cite[Corollary C.3.6]{Ultracategories}.
\end{recollection}
\begin{definition}
Let $X$ be a qcqs scheme. We define the \textit{condensed category of (non-coherent) points} of (the \'etale topos of) $X$ by the assignment 
$$\Pt(X)\colon \Extr \rightarrow \Cat, \  S \mapsto \Fun^*(\Sh(\Et{X},\Set),\Sh(S,\Set)).$$   
\end{definition}
\begin{remark}
As the name suggests, the condensed category of (non-coherent) points $\Pt(X)$ differs from $\Gal(X)$ only in the regard that we also consider non-coherent functors. Haine also compares these two condensed categories and proves that the inclusion $$\Gal(X) \hookrightarrow \Pt(X)$$ induces an equivalence on condensed classifying anima, see \cite[Theorem 2.20]{Classifyinganima}.
\end{remark}
\begin{remark}{\textbf{Condensed category of points as an ultracategory}}\\
For a qcqs scheme $X$, the condensed category of (non-coherent) points $\Pt(X)$ finds a description as an ultracategory in the category $\mathrm{Ult^L}$ of \textit{ultracategories with left ultrafunctors}, see \cite[Definition 1.4.1]{Ultracategories}. There are fully faithful embeddings 
$$\{\text{coherent $1$-topoi, geometric morphisms}\} \hookrightarrow \mathrm{Ult^L} \hookrightarrow \{\text{Stacks of categories on \Comp}\}\comma$$
compare \cite[Remark 2.2.9., Remark 4.3.4]{Ultracategories}.
Here, the first embedding sends a coherent topos $\mathcal{X}$ to the ultracategory $$\mathrm{Mod}(\mathcal{X}^{\coh})\colonequals \Fun_{\mathrm{pre}}(\mathcal{X}^{\coh}, \Set)$$ of \textit{models} on its pretopos of coherent objects $\mathcal{X}^{\coh}$ which coincides, on the level of categories, with the (1-)category of points $\Pt(\mathcal{X})$.
The second functor sends an ultracategory $\mathcal{M}$ to the stack on compact Hausdorff spaces $\Comp$ (with respect to the Grothendieck topology given by finite jointly surjective families) given by the assignment
$$X\mapsto\Fun^{\mathrm{LUlt}}(X,\mathcal{M}) \comma$$ 
see \cite[Proposition 4.1.5]{Ultracategories}.
Note that the category of stacks of categories on the site $\Comp$ identifies with the category $\Cond(\Cat)$ of condensed categories. From this perspective, the \textit{condensed category of (non-coherent) points} $\Pt(X)$ is the image of the ultracategory $\mathrm{Mod}(X_{\et}^{\coh})$ under the composition of embeddings above. More generally, Lurie's work shows that there is a fully faithful functor of \textit{condensed points} on the category of all coherent $1$-topoi
\begin{equation}\label{eq:fullyfaithfulpoints}
\Pt \colon \{\text{coherent $1$-topoi, geometric morphisms}\} \hookrightarrow \Cond(\Cat)\period   
\end{equation}
\end{remark}
We will first describe the condensed category of (non-coherent) points, before passing over to $\Gal(X)$.
We do not explicitly work with the notion of ultracategories here but use results and notation provided by Lurie in \cite[Section 6.2]{Ultracategories}.
\begin{recollection}\label[recollection]{rec:def_weaklyproj}
Let $\Ccal$ be a pretopos and $\Pro(\Ccal)$ its category of \textit{pro-objects} whose objects are left exact functors $ \fromto{\Ccal}{\Ani} $.
An object $W\in \Pro(\Ccal)$ is called \textit{weakly projective}, see \cite[Definition 6.2.2]{Ultracategories}, if for every effective epimorphism $f\colon C\rightarrow D$ in the pretopos $\Ccal$, composition with $f$ induces a surjection 
\begin{align}\label{weakly_proj}
\Hom_{\Pro(\Ccal)}(W,C)\rightarrow \Hom_{\Pro(\Ccal)}(W,D)\period
\end{align}    
 They can be identified with finite limits and effective epimorphisms preserving functors $\Ccal \rightarrow \Set$.
\end{recollection}
\begin{notation}
Let $\Ccal$ be a pretopos. We denote by \textbf{$\Pro^{\mathrm{wp}}(\Ccal)$} the subcategory spanned by the \textit{weakly projective objects} in $\Pro(\Ccal)$ .   
\end{notation}
The following definition is given by Lurie in \cite[Definition 6.3.8]{Ultracategories}.
\begin{definition}
Let $\Ccal$ be a pretopos.
We denote by \textbf{$\mathrm{Stone}_{\Ccal}$} the category with 
\begin{itemize}
    \item[1.] \textbf{Objects:} An object is a pair $(X,\mathcal{O}_X)$, with $X\in \ProFin$ and $\mathcal{O}_X\colon \Ccal \rightarrow \Sh(X, \Set)$ a pretopos morphism.
    \item[2.] \textbf{Morphisms:} A morphism $(X,\mathcal{O}_X)\rightarrow(Y,\mathcal{O}_Y)$ is a pair $(f,\alpha)$ with $f$ a continuous map of profinite sets and $\alpha\colon f^*\mathcal{O}_Y\rightarrow \mathcal{O}_X$ a natural transformation of functors $\Ccal \to \Sh(X,\Set)$ induced by $f$.  
\end{itemize}
\end{definition}
\begin{remark}
Let $\Ccal$ be a pretopos and $X\in \Top$ a topological space. In \cite{Ultracategories}, pretopos morphisms $\mathcal{O}_X\colon \Ccal \rightarrow \Sh(X, \Set)$ are called \textit{$X$-models}. They generalize the notion of \textit{models} given by pretopoi morphisms $\Ccal \rightarrow \Set$. For each point $x\in X$, composition of an $X$-model with the pullback along $\{x\}\hookrightarrow X$ identifies with a model of $\Ccal$ or, in other words, with a point of the topos $\Sh(\Ccal)$. One can think of an $X$-model of a pretopos $\Ccal$ as a continuous family of models of $\Ccal$, parametrized by the space $X$, compare \cite[Example 6.3.5, Remark 6.3.6, Remark 6.3.7]{Ultracategories}.
\end{remark}
Our most important ingredient is the following theorem by Lurie showing that every weakly projective pro-object on a pretopos $\Ccal$ can be viewed as a continuous family of points of the topos $\Sh(\Ccal)$ (where $\Ccal$ is equipped with the coherent topology) parametrized by a profinite set $X$.
\begin{theorem}{\cite[Theorem 6.3.14]{Ultracategories}}
Let $\Ccal$ be a pretopos.
There is an equivalence of categories
\begin{align}\label{eq:Ultracat_Stone}
\mathrm{Stone}_{\Ccal} \simeq \Pro^{\mathrm{wp}}(\Ccal) 
\end{align}
induced by post-composition with the global sections $\Gamma_X\colon \Sh(X,\Set) \rightarrow \Set$ for $X\in \ProFin$.    
\end{theorem}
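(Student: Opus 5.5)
The plan is to construct the functor in the statement explicitly, check that it lands in weakly projective pro-objects, and then build an inverse by reconstructing the profinite set $X$ from the Boolean algebra $W(1\sqcup 1)$.

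\textbf{The functor.} Send $(X,\mathcal{O}_X)\in\mathrm{Stone}_{\Ccal}$ to $\Phi(X,\mathcal{O}_X)\colonequals \Gamma_X\circ\mathcal{O}_X\colon \Ccal\to\Set$. A morphism $(f,\alpha)\colon (X,\mathcal{O}_X)\to(Y,\mathcal{O}_Y)$ induces
$$\Gamma_Y\mathcal{O}_Y\to\Gamma_X f^*\mathcal{O}_Y\xrightarrow{\alpha}\Gamma_X\mathcal{O}_X\period$$
This is a natural transformation of left exact functors, hence a morphism in $\Pro(\Ccal)$ in the correct direction.

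\textbf{The image is weakly projective.} By the identification in \cref{rec:def_weaklyproj}, it suffices to show that $\Gamma_X\circ\mathcal{O}_X$ preserves finite limits and effective epimorphisms.
\begin{itemize}
\item Finite limits are preserved because both $\mathcal{O}_X$ and $\Gamma_X$ preserve them.
\item For effective epimorphisms, $\mathcal{O}_X$ sends them to epimorphisms of sheaves, so it suffices that $\Gamma_X$ preserves epimorphisms when $X$ is profinite. Given an epimorphism $F\to G$ and $s\in G(X)$, the section $s$ lifts locally. By compactness and total disconnectedness, the lifting cover refines to a finite partition of $X$ into clopens, and the local lifts glue over this disjoint partition.
\end{itemize}

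\textbf{The inverse: recovering $X$.} Let $W\colon\Ccal\to\Set$ preserve finite limits and effective epimorphisms. The object $2\colonequals 1\sqcup 1$ is an internal Boolean algebra in $\Ccal$, using disjointness of coproducts and finite limits. Hence $B\colonequals W(2)$ is a Boolean algebra, and we set $X\colonequals\mathrm{Stone}(B)$. This is forced: for an honest $X$-model, $\mathcal{O}_X(1\sqcup1)$ is the constant sheaf $2$, whose global sections are the clopens of $X$.

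\textbf{The inverse: constructing $\mathcal{O}_X$.} For an idempotent $e\in B$, corresponding to a clopen $U_e$, and for $C\in\Ccal$, we use the projection $C\times 2\simeq C\sqcup C\to 2$. We define $\mathcal{O}_X(C)(U_e)$ as the fibre of $W(C\sqcup C)\to W(2)$ over $e$, or equivalently by cutting $W(C)$ down via the idempotent $e$. We then check three things.
\begin{itemize}
\item This is a sheaf on the basis of clopens, using that $W$ preserves finite products and $W(1)=\ast$.
\item $C\mapsto\mathcal{O}_X(C)$ preserves finite limits, which follows formally from left exactness of $W$.
\item $\mathcal{O}_X$ preserves finite coproducts and effective epimorphisms. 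This can be tested on stalks. The stalk at an ultrafilter $p$ is $\colim_{U\in p}\mathcal{O}_X(C)(U)$.
\end{itemize}
For coproducts, a section of $W(C\sqcup D)$ determines via $C\sqcup D\to 2$ a clopen decomposition on which it lies in $W(C)$, respectively $W(D)$. In the stalk at an ultrafilter exactly one side survives. Epimorphisms pass to the filtered colimit, using that $W$ preserves effective epimorphisms. By construction $\Gamma_X\mathcal{O}_X=W$.

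\textbf{Full faithfulness.} A map of pro-objects $\Phi(Y,\mathcal{O}_Y)\to\Phi(X,\mathcal{O}_X)$ restricts at the object $2$ to a Boolean algebra map $\mathrm{Clop}(Y)\to\mathrm{Clop}(X)$, which by Stone duality is a unique continuous $f\colon X\to Y$. The remaining components then give exactly $\alpha\colon f^*\mathcal{O}_Y\to\mathcal{O}_X$, by the clopen-by-clopen description of the sheaves. Applied to $(X,\mathcal{O}_X)$ itself, the construction above recovers $(X,\mathcal{O}_X)$, which gives essential surjectivity.

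\textbf{Expected obstacle.} The main difficulty is the stalkwise verification that the reconstructed $\mathcal{O}_X$ is a genuine pretopos morphism. The key point is the splitting argument for coproducts, where the ultrafilter chooses a summand, together with compatibility of quotients by equivalence relations. I would first try to reduce everything to the single observation that $W$ commutes with the decomposition $C\times 2\simeq C\sqcup C$.
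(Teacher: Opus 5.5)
The paper gives no argument for this statement; it is quoted from Lurie. Your sketch therefore has to stand on its own. Its architecture is sound: the functor $\Gamma_X\circ\mathcal{O}_X$, weak projectivity via refining covers of a Stone space to finite clopen partitions, $X$ recovered as the Stone space of $W(1\sqcup 1)$, and Stone duality for morphisms. However, the central definition of the sheaf is wrong.

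\textbf{The gap.} Since $W$ preserves finite products, $W(C\sqcup C)\simeq W(C\times(1\sqcup 1))\simeq W(C)\times W(1\sqcup 1)$ over $W(1\sqcup 1)$. Hence the fibre over $e$ is $W(C)$ for \emph{every} $e$. Your presheaf takes the value $W(C)$ on every clopen. It therefore violates the sheaf condition at $U_e=\emptyset$ and on every clopen decomposition unless $W(C)$ is a point, so the claimed verification ``using that $W$ preserves finite products and $W(1)=\ast$'' fails. For an honest model, the fibre over $e$ consists of a section on $U_e$ together with a section on its complement, which is just $\Gamma(X,\mathcal{O}_X(C))$ again.

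Nor can one ``cut $W(C)$ down via $e$''. Take $\Ccal=\Set^{\mathrm{fin}}\times\Set^{\mathrm{fin}}$ and $W(C_1,C_2)=C_1\times C_2$, so that $X$ has two points, and let $C=(1,\emptyset)$. Then $W(C)=\emptyset$, although $\mathcal{O}_X(C)$ must have a section over one of the two points. Local sections are simply not recorded in $W(C)$.

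\textbf{The fix.} The missing idea is to replace one copy of $C$ by the terminal object. Put $\mathcal{O}_X(C)(U_e)\colonequals W(C\sqcup 1)\times_{W(1\sqcup 1)}\{e\}$, using the map $C\sqcup 1\to 1\sqcup 1$. For a genuine $X$-model this is a section of $\mathcal{O}_X(C)$ on $U_e$ together with the unique section of $1$ off $U_e$, i.e.\ $\Gamma(U_e,\mathcal{O}_X(C))$. In pro-object terms, $e$ splits $W$ as $W_{U_e}\sqcup W_{X\setminus U_e}$, and this set is $\Hom_{\Pro(\Ccal)}(W_{U_e},C)$. With this definition your checks go through:
\begin{itemize}
\item Extensivity (e.g.\ $(C\sqcup 1)\times_{1\sqcup 1}(1\sqcup 1\sqcup 1)\simeq C\sqcup C\sqcup 1$) together with left exactness of $W$ gives the sheaf condition for finite clopen partitions and the value $\ast$ on $\emptyset$.
\item Left exactness in $C$ holds because $C\mapsto(C\sqcup 1\to 1\sqcup 1)$ is left exact into $\Ccal_{/1\sqcup 1}$.
\item An effective epimorphism $C\to D$ gives a surjection on every $U_e$, since $C\sqcup 1\to D\sqcup 1$ is one over $1\sqcup 1$.
\item $\Gamma_X\mathcal{O}_X\simeq W$ because $C\simeq(C\sqcup 1)\times_{1\sqcup 1}1$.
\item Full faithfulness follows clopen by clopen from the components at the objects $C\sqcup 1$.
\end{itemize}

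Note also that essential surjectivity is precisely the identification $\Gamma_X\mathcal{O}_X\simeq W$. It is not the fact that your construction, applied to $(X,\mathcal{O}_X)$, returns $(X,\mathcal{O}_X)$.
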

\begin{recollection}
There are functors
\begin{align}
&F_1\colon\mathrm{Stone}_{\Ccal} \rightarrow \ProFin \text{ and } F_2\colon \Pro(\Ccal)\rightarrow \ProFin \end{align} which are compatible with the equivalence (\ref{eq:Ultracat_Stone}), see  \cite[Construction 6.4.3]{Ultracategories}.
While the first functor just forgets about the pretopos morphism, the second one is the left adjoint of the unique cofiltered limit preserving extension of $$\FinSet \rightarrow \Ccal, \ S \mapsto \coprod_{s \in S} \ast_{\Ccal}\period$$
\end{recollection}
\begin{notation}
For a fixed $S\in \Extr$, we consider the subcategory $(S,\mathrm{id})\subset \ProFin$ only consisting of $S$ and the identity map $S\rightarrow S$.  
\end{notation}
We will take the fiber of this subcategory under $F_1\colon \mathrm{Stone}_{\Ccal} \rightarrow \ProFin.$  
With the right choice on $\Ccal$, this provides us with the connection to $\Pt(X)(S)=\Fun^*(\Sh(\Et{X},\Set),\Sh(S,\Set))$.
\begin{observation}
Let $S\in \Extr$. Setting $\mathcal{D}=\Sh(S, \Set)$ in the equivalence (\ref{eq:pre_topoi_morphisms}), we can identify
$$\Fun_{\mathrm{pre}}(X_{\et}^{\coh}, \Sh(S, \Set))\simeq \Pt(X)(S)\period $$
In other words $\Pt(X)(S)$ is the category of $S$-models of $X_{\et}^{\coh}$.
Now consider the preimage of the subcategory $(S,\mathrm{id})\subset\ProFin$ under the forgetful functor $$F_1\colon \mathrm{Stone}_{X_{\et}^{\coh}}\rightarrow \ProFin \period $$  This is the subcategory of $\mathrm{Stone}_{X_{\et}^{\coh}}$ given by  pairs $(S,\mathcal{O}_S)$ where $S$ is fixed. Note that a morphism $\mathcal{O}_S \rightarrow \mathcal{O}_S'$ in this subcategory corresponds to a natural transformation $\alpha\colon \mathcal{O}_S'\rightarrow \mathcal{O}_S$ of pretopoi morphisms. Thus this subcategory identifies with the opposite category $\Pt(X)(S)^{\op}.$ 
\end{observation}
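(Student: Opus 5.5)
The plan is to unwind definitions: compute the fiber of $F_1$ over $(S,\mathrm{id})$ directly, then apply the equivalence \eqref{eq:pre_topoi_morphisms} with $\mathcal{D}=\Sh(S,\Set)$. Since all categories involved are $1$-categories and $(S,\mathrm{id})\subset\ProFin$ is a discrete subcategory with one object and only its identity, I take the strict fiber. It is the subcategory of $\mathrm{Stone}_{X_{\et}^{\coh}}$ consisting of objects whose underlying profinite set is $S$, and morphisms whose first component is $\mathrm{id}_S$.

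\textbf{Objects.} An object of the fiber is a pair $(S,\mathcal{O}_S)$ with $\mathcal{O}_S\colon X_{\et}^{\coh}\to\Sh(S,\Set)$ a pretopos morphism. Forgetting the fixed first coordinate, this is exactly an object of $\Fun_{\mathrm{pre}}(X_{\et}^{\coh},\Sh(S,\Set))$.

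\textbf{Morphisms.} A morphism $(S,\mathcal{O}_S)\to(S,\mathcal{O}'_S)$ in the fiber is a pair $(\mathrm{id}_S,\alpha)$ with $\alpha\colon \mathrm{id}_S^*\mathcal{O}'_S\to\mathcal{O}_S$. I will use the canonical identification $\mathrm{id}_S^*=\mathrm{id}_{\Sh(S,\Set)}$, which is strictly the identity if one chooses $\mathrm{id}^*$ to be the identity functor; otherwise one uses the canonical isomorphism, which is compatible with composition. With this identification, $\alpha$ is simply a natural transformation $\mathcal{O}'_S\to\mathcal{O}_S$. The remaining check is composition. For $(g,\beta)\circ(f,\alpha)$, the composite in $\mathrm{Stone}$ is given by $\alpha\circ f^*\beta$ (up to the canonical isomorphism $(gf)^*\simeq f^*g^*$). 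For $f=g=\mathrm{id}$ this reduces to $\alpha\circ\beta$. This is composition in $\Fun_{\mathrm{pre}}(X_{\et}^{\coh},\Sh(S,\Set))$ read in the opposite order, and identities go to identities. Hence the fiber is isomorphic to $\Fun_{\mathrm{pre}}(X_{\et}^{\coh},\Sh(S,\Set))^{\op}$.

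\textbf{Identification with $\Pt(X)(S)$.} The equivalence \eqref{eq:pre_topoi_morphisms} (from \cite[Corollary C.3.6]{Ultracategories}) is an equivalence of categories, not just of objects, so it also identifies natural transformations. Combined with $\Sh(X_{\et}^{\coh})=\Sh(\Et{X},\Set)$, this gives $\Fun_{\mathrm{pre}}(X_{\et}^{\coh},\Sh(S,\Set))\simeq\Fun^*(\Sh(\Et{X},\Set),\Sh(S,\Set))=\Pt(X)(S)$. Taking opposites yields the claim. Concretely, the equivalence sends an algebraic morphism to its restriction to coherent objects, and its inverse is the unique colimit-preserving extension.

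The only step needing care is the bookkeeping of the variance of $\alpha$ and of $\mathrm{id}^*$ under composition. This is routine but is precisely what produces the $\op$.
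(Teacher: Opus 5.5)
Your proposal is correct and takes the same route as the paper. Both identify the fiber of $F_1$ over $(S,\mathrm{id})$ with pretopos morphisms $X_{\et}^{\coh}\to\Sh(S,\Set)$ whose transformations $\alpha$ run in the reverse direction, then invoke \eqref{eq:pre_topoi_morphisms} with $\mathcal{D}=\Sh(S,\Set)$; your check that composition in the fiber reduces to $\alpha\circ\beta$ spells out variance bookkeeping the paper leaves implicit.
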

By the last observation, under the equivalence $(\ref{eq:Ultracat_Stone})$ the opposite category $$\Fun_{\mathrm{pre}}(X_{\et}^{\coh}, \Sh(S, \Set))^{\op}=\Pt(X)(S)^{\op}$$ corresponds to a subcategory of $\Pro^{\mathrm{wp}}(X_{\et}^{\coh})$ that can be described as follows:
\begin{definition}\label[definition]{def:Kconnectweakpro}
Let $X$ be a qcqs scheme.
For a fixed $S\in \Extr$, let $\Pro^{\mathrm{wp}}(X_{\et}^{\coh})_S$ denote the subcategory of $\Pro^{\mathrm{wp}}(X_{\et}^{\coh})$ consisting of 
\begin{itemize}
    \item[1.] \textbf{Objects:} Weakly projective objects $W\in \Pro(X_{\et}^{\coh})$ such that $F_2(W)=S$.
    \item[2.] \textbf{Morphisms:} Morphisms in $\Pro^{\mathrm{wp}}(X_{\et}^{\coh})$ inducing the identity under $F_2$.
\end{itemize}
We refer to this category as the category of \textit{$S$-connected weakly projective pro-objects} of $X_{\et}^{\coh}$.
\end{definition}
\begin{observation}
The category $\Pro^{\mathrm{wp}}(X_{\et}^{\coh})_S$ is the preimage of $(S, \mathrm{id})\subset \ProFin$ under the functor
$F_2\colon \Pro(X_{\et}^{\coh}) \rightarrow \ProFin.$    
\end{observation}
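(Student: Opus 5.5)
The statement is that $\Pro^{\mathrm{wp}}(X_{\et}^{\coh})_S$ is the preimage of $(S,\mathrm{id})\subset \ProFin$ under $F_2\colon \Pro(X_{\et}^{\coh}) \rightarrow \ProFin$. The plan is to reduce it to unwinding \cref{def:Kconnectweakpro} together with the compatibility of $F_1$ and $F_2$ with the equivalence (\ref{eq:Ultracat_Stone}).

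\textbf{Objects.} The preimage of $(S,\mathrm{id})$ under $F_2$ is the subcategory of $\Pro(X_{\et}^{\coh})$ consisting of objects $W$ with $F_2(W)=S$ and morphisms $\phi$ with $F_2(\phi)=\mathrm{id}_S$. By \cref{def:Kconnectweakpro}, the objects of $\Pro^{\mathrm{wp}}(X_{\et}^{\coh})_S$ are the weakly projective $W$ with $F_2(W)=S$. So the one real point is that every $W\in\Pro(X_{\et}^{\coh})$ with $F_2(W)=S\in\Extr$ is automatically weakly projective.

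\textbf{Weak projectivity.} This is the step I expect to be the main obstacle, and I would argue it as follows.
\begin{itemize}
\item[(i)] $F_2$ is left adjoint to the cofiltered-limit-preserving extension $R$ of $T\mapsto \coprod_{t\in T}\ast$. Hence $\Hom(W,R(S'))=\Hom_{\ProFin}(F_2(W),S')$ for every profinite $S'$, and maps out of $W$ to "constant" objects are controlled by maps out of $S$.
\item[(ii)] Take an effective epimorphism $C\rightarrow D$ in the pretopos. I would use the projectivity of $S$ in $\Comp$ from \cref{not:topo_notions} (Gleason) to lift along the induced surjections of profinite sets.
\end{itemize}
This is where the argument is weakest. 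Weak projectivity is a statement about lifting along arbitrary coherent objects $C\rightarrow D$, not only along constant ones, and (i)–(ii) as stated only handle the latter. I am unsure that an arbitrary $W$ over an extremally disconnected $S$ is weakly projective. If this direct route fails, I would read the "preimage" in the statement as taken inside $\Pro^{\mathrm{wp}}(X_{\et}^{\coh})$, i.e. under the restriction of $F_2$ to $\Pro^{\mathrm{wp}}(X_{\et}^{\coh})$. Then the object-level claim is literally \cref{def:Kconnectweakpro} and needs no lifting argument.

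\textbf{Morphisms.} A morphism lies in the fiber exactly when $F_2$ sends it to $\mathrm{id}_S$, which is the condition in \cref{def:Kconnectweakpro}. The fiber is a subcategory because $F_2$ is a functor, so identities and composites of maps over $\mathrm{id}_S$ stay over $\mathrm{id}_S$. Fullness inside $\Pro^{\mathrm{wp}}$ is automatic, since $\Pro^{\mathrm{wp}}(X_{\et}^{\coh})$ is a full subcategory of $\Pro(X_{\et}^{\coh})$.

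\textbf{Consistency check.} Finally, I would use that $F_1$ and $F_2$ are compatible under (\ref{eq:Ultracat_Stone}) (Lurie, Construction 6.4.3). This identifies the preimage of $(S,\mathrm{id})$ under $F_1$ — which the earlier observation showed is $\Pt(X)(S)^{\op}$ — with the preimage under $F_2$. That confirms the description matches the category used in the subsequent comparison with $\Gal(X)(S)$.
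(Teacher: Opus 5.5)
The step you call ``the one real point'' is not merely hard to prove. It is false, so your primary route cannot be completed. An object $W\in\Pro(X_{\et}^{\coh})$ with $F_2(W)=S\in\Extr$ need not be weakly projective.

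Take $X=\Spec(\mathbb{R})$ and $W=X$, the terminal object of $X_{\et}^{\coh}$. Then $F_2(W)=\pi_0(X)=\ast\in\Extr$. The \'etale cover $\Spec(\mathbb{C})\rightarrow X$ is an effective epimorphism in $X_{\et}^{\coh}$. However, $\Hom(W,\Spec(\mathbb{C}))$ is the set of $\mathbb{R}$-algebra maps $\mathbb{C}\rightarrow\mathbb{R}$, which is empty, while $\Hom(W,X)=\ast$. So (\ref{weakly_proj}) fails. The weakly projective object lying over $\ast$ here is $\Spec(\mathbb{C})$, the strict localization. The same happens for $\lim_i\coprod_{S_i}X$ over any nonempty $S\in\Extr$.

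This is exactly the weakness you sensed in (i)--(ii). The adjunction for $F_2$ and Gleason's theorem only control maps into constant objects $\coprod_{t\in T}\ast$, whereas weak projectivity concerns arbitrary coherent targets. In particular, read literally, with $F_2$ defined on all of $\Pro(X_{\et}^{\coh})$, the statement is false: the fiber of $F_2$ over $(S,\mathrm{id})$ strictly contains $\Pro^{\mathrm{wp}}(X_{\et}^{\coh})_S$.

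The observation is meant for the restriction of $F_2$ to $\Pro^{\mathrm{wp}}(X_{\et}^{\coh})$. The paper offers nothing beyond \cref{def:Kconnectweakpro}, so this is the only tenable reading. Under it, your fallback is the entire proof. Objects $W$ with $F_2(W)=S$ and morphisms sent to $\mathrm{id}_S$ are precisely the data of \cref{def:Kconnectweakpro}, and the fiber is a subcategory by functoriality. You should therefore state this reading as the claim itself, not as a contingency, and drop the lifting argument entirely.

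Your consistency check via $F_1$ is fine, and it is what the paper uses next in \cref{cor:alter_Gal}. Note, though, that it too only sees the restricted functor, since (\ref{eq:Ultracat_Stone}) identifies $\mathrm{Stone}_{\Ccal}$ with $\Pro^{\mathrm{wp}}(\Ccal)$, not with $\Pro(\Ccal)$.
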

Summarizing, we find the following alternative description of the condensed category of (non-coherent) points $\Pt(X).$
\begin{corollary}\label[corollary]{cor:alter_Gal}
Let $X$ be a qcqs scheme.
For every $S\in \Extr$, there is an equivalence of categories
$$\Pt(X)(S)\simeq \Pro^{\mathrm{wp}}(X_{\et}^{\coh})_S^{\op}$$
induced by postcomposition with the global sections functor $\Gamma_S\colon \Sh(S,\Set) \rightarrow \Set$ (and further restriction to coherent objects).
\end{corollary}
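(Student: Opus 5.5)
The corollary follows by assembling the equivalences recalled above, restricted to the fiber over $(S,\mathrm{id})$. First, by \eqref{eq:pre_topoi_morphisms} with $\mathcal{D}=\Sh(S,\Set)$, we identify
\[
\Pt(X)(S)=\Fun^*(\Sh(\Et{X},\Set),\Sh(S,\Set))\simeq \Fun_{\mathrm{pre}}(X_{\et}^{\coh},\Sh(S,\Set)).
\]
This identification is given by restricting an algebraic morphism to the coherent objects, which is the "further restriction" in the statement.

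Second, as in the preceding observation, we identify $\Fun_{\mathrm{pre}}(X_{\et}^{\coh},\Sh(S,\Set))^{\op}$ with the fiber of $F_1\colon \mathrm{Stone}_{X_{\et}^{\coh}}\to\ProFin$ over the subcategory $(S,\mathrm{id})$. Objects of this fiber are pairs $(S,\mathcal{O}_S)$. Morphisms are pairs $(\mathrm{id}_S,\alpha)$ with $\alpha\colon \mathcal{O}'_S\to\mathcal{O}_S$ a natural transformation, because $\mathrm{id}^*$ is the identity. Hence the fiber is the opposite of the category of $S$-models. One point needs care: this must be the strict fiber, meaning objects whose underlying profinite set is literally $S$ and morphisms lying over $\mathrm{id}_S$. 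Since $F_1$ is a forgetful functor, this is unproblematic.

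Third, we apply Lurie's equivalence \eqref{eq:Ultracat_Stone}, $\mathrm{Stone}_{\Ccal}\simeq\Pro^{\mathrm{wp}}(\Ccal)$, which is given by $(S,\mathcal{O}_S)\mapsto \Gamma_S\circ\mathcal{O}_S$. By \cite[Construction 6.4.3]{Ultracategories} it satisfies $F_2\circ\Phi\simeq F_1$. We then check that an equivalence compatible with the projections to $\ProFin$ restricts to an equivalence between the fibers over $(S,\mathrm{id})$. The fiber of $F_2$ over $(S,\mathrm{id})$ is, by the observation after \cref{def:Kconnectweakpro}, exactly $\Pro^{\mathrm{wp}}(X_{\et}^{\coh})_S$.

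\textbf{Main obstacle.} The subtle step is this fiber restriction. The compatibility $F_2\circ\Phi\simeq F_1$ holds only up to a natural isomorphism $\eta$. We therefore (a) show that a weakly projective $W$ with $F_2(W)\cong S$ can be transported to one with $F_2(W)=S$, and (b) check that for a morphism $\varphi$ between objects over $S$, $F_2(\varphi)=\mathrm{id}$ if and only if $F_1$ of its preimage is $\mathrm{id}$. For (b), naturality of $\eta$ gives $\eta_S\circ F_1(\psi)=F_2(\varphi)\circ\eta_S$, so $F_1(\psi)=\eta_S^{-1}F_2(\varphi)\eta_S$. Thus $F_1(\psi)=\mathrm{id}$ exactly when $F_2(\varphi)=\mathrm{id}$.

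To make this clean, I would interpret both fibers as the essential (homotopy) fibers: pairs consisting of an object together with an identification of its image with $S$, and morphisms compatible with these identifications. I would then note that, since $F_1$ and $F_2$ are isofibrations, these agree with the strict fibers, and that equivalences over a base induce equivalences of homotopy fibers. Composing the three equivalences and passing to opposites gives $\Pt(X)(S)\simeq\Pro^{\mathrm{wp}}(X_{\et}^{\coh})_S^{\op}$, induced by postcomposition with $\Gamma_S$ as claimed.
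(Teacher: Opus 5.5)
Your proposal is correct and follows the same route as the paper. You identify $\Pt(X)(S)$ with $S$-models via \eqref{eq:pre_topoi_morphisms}, recognize their opposite as the fiber of $F_1$ over $(S,\mathrm{id})$, and then transport along Lurie's equivalence $\mathrm{Stone}_{\Ccal}\simeq\Pro^{\mathrm{wp}}(\Ccal)$, using its compatibility with $F_1,F_2$, to land in $\Pro^{\mathrm{wp}}(X_{\et}^{\coh})_S$.

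Your extra care about fibers goes beyond what the paper writes, but step (b) is slightly off as stated. The naturality square involves the components of $\eta$ at two different objects, not a single $\eta_S$, so the argument only works in the essential-fiber formulation you adopt at the end, with the identifications given by those components.
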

\begin{remark}
Observe that all conclusions from this subsection are valid in the general setting of a coherent topos: For a coherent topos $\mathcal{X}$, the subcategory of coherent objects $\mathcal{X}^{\coh}$ is a pretopos \cite[Corollary C.5.14]{Ultracategories} and $\mathcal{X}=\Sh(\mathcal{X}^{\coh})$ where $\mathcal{X}^{\coh}$ carries the coherent topology \cite[Proposition C.6.4, Theorem C.6.5]{Ultracategories}. Let $\Pt(\mathcal{X})$ be the image of $\mathcal{X}$ under the fully faithful functor (\ref{eq:fullyfaithfulpoints}). Then the counterpart of \cref{cor:alter_Gal} gives an equivalence of categories
$$\Pt(\mathcal{X})(S)\simeq \Pro^{\mathrm{wp}}(\mathcal{X}^{\coh})_S^{\mathrm{op}}\period $$
This statement is a condensed enhancement of the classical topos theoretic result, e.g., stated in \cite[Proposition 1.4]{GABBER20154667}, for categories $\Ccal$ with finite limits and a topology $\tau$ claiming that
\begin{align}
    \{\text{fibre functors on } \Sh_{\tau}(\Ccal)\} \simeq \{\text{$\tau$-local pro-objects in $\Ccal$}\}
\end{align}
Note that the definition of $\tau$-local pro-objects in $\Ccal$ \cite[Definition 1.2]{GABBER20154667} exactly recovers those weakly projective pro-objects in $\Ccal$ that also preserve finite coproducts. This is the case if and only if the pro-object is connected meaning that it corresponds to $S=\ast$, as observed in \cite[Proposition 6.3.12 (3)]{Ultracategories}.
\end{remark}
In the next subsection, we will show that the equivalence in \cref{cor:alter_Gal} restricts (for an affine scheme $X$) on $\Gal(X)(S)$ to \textit{projective} objects in $\Pro(\mathcal{X}^{\coh})$ which in turn identify with $S$-connected $w$-contractible rings over $X$. This will give the claimed result (\ref{eq:alternative_Gal}).
\subsection{Galois category in terms of weakly contractibles}
In this subsection, we use the content of the previous two subsections to give an alternative description of the Galois category of a scheme in terms of weakly contractible affine schemes or, equivalently, $w$-contractible rings. For simplicity, we restrict ourselves to affine base schemes $X$.
We are aiming to prove the following result which was already stated in (\ref{eq:alternative_Gal}):
\begin{theorem}\label[theorem]{prop:Galois_weaklycontractible}
    Let $X$ be an affine scheme. For every $S\in \Extr$, the equivalence of categories from \cref{cor:alter_Gal} restricts to an equivalence
    \begin{align*}
    \Gal(X)(S) &\simeq \{\text{$S$-connected $w$-contractible rings over $X$ + $S$-connected  maps}\} \period    
  \end{align*}
\end{theorem}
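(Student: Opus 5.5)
Starting from \cref{cor:alter_Gal}, $\Pt(X)(S)\simeq \Pro^{\mathrm{wp}}(X_{\et}^{\coh})_S^{\op}$, two things remain. First, I identify which $S$-connected weakly projective pro-objects correspond to \emph{coherent} algebraic morphisms, i.e.\ to the full subcategory $\Gal(X)(S)\subset \Pt(X)(S)$. Second, I identify those pro-objects with $S$-connected weakly contractible affine schemes over $X$. For the second step I use that $X=\Spec(R)$ is affine. Then the pretopos $X_{\et}^{\coh}$ is generated under finite colimits by affine étale $X$-schemes, and a pro-object in $X_{\et}^{\coh}$ may be computed as a cofiltered limit of affine étale $X$-schemes. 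Its ``realization'' is therefore $\Spec$ of an ind-étale $R$-algebra $A$. Morphisms of pro-objects correspond to $R$-algebra maps, since ind-étale algebras form the Ind-category of étale algebras. I then check that $F_2(W)$ agrees with $\pi_0(\Spec(A))$; this holds because the left adjoint of $S\mapsto\coprod_S\ast$ extracts the profinite set of connected components, levelwise for étale pieces. With this, ``$S$-connected'' on both sides matches.

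Next I show that weak projectivity of $W$ (surjectivity of (\ref{weakly_proj}) against effective epimorphisms $C\to D$ in $X_{\et}^{\coh}$) corresponds to $\Spec(A)$ being $w$-strictly local. Testing against étale covers $U\twoheadrightarrow V$ with $V$ affine étale, a map $W\to V$ factors through some stage of $A$. Surjectivity then says that every étale cover of $\Spec(A)$ obtained by base change admits a section, and every étale cover of $\Spec(A)$ descends to a finite stage. This gives exactly the section property in \cref{def:wlocal}. I still have to verify $w$-locality of the underlying space. Here I would use that any ind-étale $A$ with all étale covers split is $w$-local; this is \cite[Lemma 2.2.9]{MR3379634}, read backwards via \cite[Lemma 2.1.10]{MR3379634}: $A$ is a retract of its $w$-localization.

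The main step is the coherence criterion. I claim that the model $\mathcal{O}_S$ attached to $W$ is coherent, meaning it sends constructible sheaves to locally constant constructible sheaves on $S$, if and only if $W$ is an honest object of $\Pro(X_{\et}^{\coh})$ whose associated functor is \emph{projective}. Equivalently, the value on each coherent $C$ is a finite-type datum, so $\mathcal{O}_S(C)$ is a finite locally constant sheaf. In scheme terms, for $V\to X$ étale the sheaf $\mathcal{O}_S(V)$ on $S\simeq \Spec(A)_{\mathrm{cl}}$ is the pullback of $V$ to $\Spec(A)_{\mathrm{cl}}$. It is locally constant with finite fibres exactly when the fibres of $V\times_X\Spec(A)\to\Spec(A)$ over the components (which are strictly henselian local) vary locally constantly in $S$. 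For a coherent morphism this holds by definition. Conversely, given a coherent $S$-model, I realize it by the limit of the affine étale $V$ over sections of $\mathcal{O}_S(V)$ on clopen pieces of $S$. This produces an ind-étale $A$ with $\pi_0=S$, and the extremal disconnectedness of $S$ upgrades $w$-strict locality to weak contractibility via \cref{rec:wstrictly_wcontractible} (\cite[Lemma 2.5.8]{MR3379634}). The functor of \cref{lem:wstrictly_coherentfunctor} gives the inverse direction and shows it lands in $\Gal(X)(S)$.

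Finally, I compare this with \cref{lem:wstrictly_coherentfunctor} to see that both constructions are mutually inverse and compatible with $S$-connected maps. The hard point I expect is the converse realization: showing that an arbitrary coherent $S$-model comes from a ring that is genuinely ind-étale over $R$ and weakly contractible, rather than from a more general weakly projective pro-object. For this I would first try the case where the model is a product of points, i.e.\ $X_S$ as in \cref{ex:stricthensel_wstrictly}, and then use the density of such families in $S$ via coherence.
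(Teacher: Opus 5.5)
\textbf{There is a genuine gap.} It sits in your identification of weak projectivity with $w$-strict locality. You are right that for $W=\Spec(A)$ pro-étale over $X$, weak projectivity means every étale cover of $\Spec(A)$ has a section. But it is false that such an $A$ is automatically $w$-local, and the retraction argument does not go through. The $w$-localization $\Spec(A^Z)\to\Spec(A)$ is only a pro-(Zariski) cover, and sections over its finite stages need not be compatible.

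A concrete counterexample: let $V$ be a rank-one valuation ring with algebraically closed fraction field $K$, and set
\[
A=\colim_N\, V\times K^N=\{(v,(k_n))\in V\times \textstyle\prod_{n\in\mathbb{N}}K \mid k_n=v \text{ for } n\gg 0\}.
\]
Every étale cover of $\Spec(A)$ descends to some $\Spec(V)\sqcup\coprod_{n<N}\Spec(K)$. There it splits, because $V$ is strictly henselian and $K$ is algebraically closed. However, the isolated closed points $\eta_n$ are dense in $\Spec(A)$, since their primes intersect to $0$. So the closed points do not form a closed set, and $\Spec(A)$ is not $w$-local. This $A$ is the pro-object attached to the continuous but non-quasi-compact map $\mathbb{N}\cup\{\infty\}\to|\Spec(V)|$ sending $\mathbb{N}$ to the generic point and $\infty$ to the closed point.

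\textbf{Why this is fatal.} As structured, your argument never uses coherence. Lurie's correspondence realizes every $S$-model as a weakly projective $W$ with $F_2(W)=S$. If all of these were $w$-strictly local, then \cite[Lemma 2.5.8]{MR3379634} would make them all weakly contractible for $S\in\Extr$, and you would have proved $\Pt(X)(S)=\Gal(X)(S)$. That is false. For $X=\Spec(V)$, the proof of \cref{prop:everwhstrictlylocal} identifies $\Pt(X)(S)$ with all continuous maps $S\to|X|$ and $\Gal(X)(S)$ with the quasi-compact ones. The map $\beta\mathbb{N}\to|X|$ sending $\mathbb{N}$ to the generic point and the remainder to the closed point is continuous but not quasi-compact.

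Your ``scheme terms'' check has the same problem: it presupposes $\Spec(A)_{\mathrm{cl}}\simeq S$, which is $w$-locality, so it is circular. The density idea in your last paragraph does not supply the missing input either.

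\textbf{The missing idea} is to convert coherence into projectivity, meaning that all pro-étale covers split, not just étale ones. Projectivity gives weak contractibility directly, and $w$-locality comes with it. The paper does this as follows. For coherent $f^*$, the extension of $\Gamma_S\circ f^*$ to $\Pro(X_{\et}^{\coh})$ factors as $\Pro(f^*)$ into $\Pro(\Sh(S,\Set)^{\coh})\subset\CondSet_{/S}$, followed by evaluation at $S$.
\begin{itemize}
\item The first functor preserves effective epimorphisms, because these are levelwise in pro-categories.
\item The second preserves them because $S\in\Extr$ is projective in $\CondSet$. This, not Lemma 2.5.8, is where extremal disconnectedness really enters.
\end{itemize}

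For the converse, take a weakly contractible $Y$ with $\pi_0(Y)=S$. The paper checks that $\Gamma_S$ composed with the functor of \cref{lem:wstrictly_coherentfunctor} is represented by $Y$. The key input is $h^*(F)(Y_{\mathrm{cl}})=F(Y)$ for the closed immersion $Y_{\mathrm{cl}}\hookrightarrow Y$ (Stacks Project, Tag 09AC). Your proposal only asserts that the two constructions are mutually inverse and does not prove this.
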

Recall the definition of the category $X^{\mathrm{wc}}_S$ of $S$-connected weakly contractible affine schemes over $X$ from \cref{def:S_connected_weaklycontr}. We first show that this category identifies with a full subcategory of $\Pro^{\mathrm{wp}}(X_{\et}^{\coh})_S$ namely the one of \textit{projective} objects. Then it remains to argue why these objects correspond exactly to those functors in $\Fun_{\mathrm{pre}}(X_{\et}^{\coh}, \Sh(S, \Set))$ which are coherent, i.e., whose image lands in the subcategory $\Sh(S,\Set)^{\mathrm{coh}}\subset \Sh(S,\Set)$ of coherent objects.
\begin{remark}
Let $X$ be an affine scheme. We denote by $\mathrm{Aff}\Et{X}$ all schemes \'etale affine over $X$.
\begin{enumerate}
    \item Every affine \'etale scheme over $X$ gives a coherent (representable) sheaf in the \'etale topos and hence we have a full subcategory relation $$\Pro(\mathrm{Aff}\Et{X})\subset \Pro(X_{\et}^{\coh})\period$$
    \item As every $C\in X_{\et}^{\coh}$ admits an effective epimorphism $C_0\twoheadrightarrow C$ with $C_0\in \mathrm{Aff}\Et{X}$, all weakly projective pro-objects over $X_{\et}^{\coh}$ already lie in $\Pro^{\mathrm{wp}}(\mathrm{Aff}\Et{X})$, see \cite[Remark 6.2.9]{Ultracategories}.
    \item  As we assume $X$ to be affine, the category $\Pro(\mathrm{Aff}\Et{X})$ of \textit{pro-\'etale affine schemes over $X$} is simply the category of affine schemes pro-\'etale over $X$, see \cite[Remark 4.2.5]{MR3379634}.
    \item Restriction of the functor        $F_2\colon\Pro(X_{\et}^{\coh}) \rightarrow \ProFin$ to $\Pro(\mathrm{Aff}\Et{X})$ coincides with $\pi_0$ on the level of schemes by uniqueness of left adjoints.
\end{enumerate}
\end{remark}
By the previous remark, we can replace $\Pro^{\mathrm{wp}}(X_{\et}^{\coh})$ in the discussion by $\Pro^{\mathrm{wp}}(\mathrm{Aff}\Et{X})$ and think about its objects as proper affine schemes in the pro-\'etale site over $X$. 
\begin{recollection}
    Recall that an object $P$ in a regular category, such as a pretopos $\Ccal$ or it pro-category $\Pro(\Ccal)$, see \cite[Corollary 6.1.20]{Ultracategories}, is \textit{projective} if the hom-functor $\Hom(P,-)$ preserves effective epimorphisms or, equivalently, if any effective epimorphism $Q\twoheadrightarrow P$ admits a section \cite[Proposition 6.2.1]{Ultracategories}. Every projective object in $\Pro(\Ccal)$ is weakly projective over $\Ccal$.
\end{recollection}
\begin{notation}
    Let $X$ be an affine scheme. As we have $\Pro^{\mathrm{wp}}(X_{\et}^{\coh})=\Pro^{\mathrm{wp}}(\mathrm{Aff}\Et{X})$, every projective pro-object over the pretopos $X_{\mathrm{et}}^{\coh}$ is a projective object in $\Pro(\mathrm{Aff}\Et{X})$. We denote the subcategory of projective pro-objects by $\Pro^{\mathrm{proj}}(\mathrm{Aff}\Et{X})$.
    For every $S\in \Extr$, we again denote by $$\Pro^{\mathrm{proj}}(\mathrm{Aff}\Et{X})_S\subset \Pro^{\mathrm{proj}}(\mathrm{Aff}\Et{X})$$ the (non-full) subcategory of affine schemes pro-\'etale over $X$ with connected components equal to $S$ and morphisms those inducing the identity on $\pi_0$.
\end{notation}
\begin{lemma}\label[Lemma]{lem:weakly_contractible_projective}
    Let $X$ be an affine scheme. 
    We have an equivalence of categories
    $$\Pro^{\mathrm{proj}}(\mathrm{Aff}\Et{X})\simeq \{\text{weakly contractible affine schemes over $X$}\}\period $$
    For every $S\in \Extr$, this induces
    $$\Pro^{\mathrm{proj}}(\mathrm{Aff}\Et{X})_S\simeq X_S^{\mathrm{wc}}\period $$
\end{lemma}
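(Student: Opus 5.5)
The plan is to identify both sides inside $\Pro(\mathrm{Aff}\Et{X})$, which by the preceding remark is the category of affine schemes pro-\'etale over $X$. Projectivity of a pro-object $P$ means, by the recollection on projective objects, that every effective epimorphism $Q\twoheadrightarrow P$ in $\Pro(\mathrm{Aff}\Et{X})$ admits a section. Weak contractibility of an affine $Y$ means that every faithfully flat weakly \'etale map $V\to Y$ admits a section. So the proof reduces to three points: the effective epimorphisms in $\Pro(\mathrm{Aff}\Et{X})$ with target $Y$ are exactly the maps of affine pro-\'etale $X$-schemes $V\to Y$ that are surjective (faithfully flat); such maps are cofinal among pro-\'etale coverings; and every weakly contractible affine over $X$ actually lies in $\Pro(\mathrm{Aff}\Et{X})$.

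First, the effective epimorphisms. By \cite[Corollary 6.1.20]{Ultracategories}, $\Pro(\Ccal)$ is regular, and a morphism of pro-objects is an effective epimorphism iff it can be presented as a cofiltered limit of effective epimorphisms in $\Ccal$. In $\mathrm{Aff}\Et{X}$ (viewed inside the pretopos $X_{\et}^{\coh}$), effective epimorphisms are surjective \'etale maps. A cofiltered limit of surjective \'etale maps of affines is a faithfully flat ind-\'etale map (surjectivity passes to the limit by compactness of the constructible topology). Conversely, for $V\to Y$ with both affine and pro-\'etale over $X$, the map $V\to Y$ is ind-\'etale, since $V$ and $Y$ are ind-\'etale over $X$ (\cite[Lemma 2.3.3 / Thm 2.3.4]{MR3379634}-type statements). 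It can then be written as a limit of \'etale $Y$-algebras, and by a standard limit argument as a limit of \'etale surjections in $\mathrm{Aff}\Et{X}$.

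With this, \emph{projective $\Rightarrow$ weakly contractible}: let $P$ be projective and $V\to P$ a faithfully flat weakly \'etale cover. By \cite[Theorem 2.3.4]{MR3379634}, any weakly \'etale cover of an affine is refined by a faithfully flat ind-\'etale one $V'\to P$, with $V'$ affine, and $V'$ is again pro-\'etale over $X$. Projectivity gives a section of $V'\to P$, hence of $V\to P$. \emph{Weakly contractible $\Rightarrow$ projective}: a weakly contractible $Y$ over $X$ is $w$-strictly local by \cref{rec:wstrictly_wcontractible}. I expect the main obstacle to be that the statement implicitly requires $Y$ to be pro-\'etale over $X$ in order to be an object of $\Pro(\mathrm{Aff}\Et{X})$. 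I would read ``over $X$'' as ``in $\ProEt{X}$'', matching the definition of weakly contractible objects in $\ProEt{X}$ and of $X_S^{\mathrm{wc}}$. If the weaker reading is intended, one would need to replace $Y$ by an ind-\'etale model, and I would flag this. Given membership, any effective epimorphism $Q\to Y$ is faithfully flat ind-\'etale by the first step, so it has a section. Fully faithfulness is automatic, since both sides are (full) subcategories of affine schemes over $X$.

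Finally, for the $S$-version, both sides are cut out from these equivalent categories by the same conditions: objects with $\pi_0=S$ and morphisms inducing the identity on $\pi_0$. This uses item 4 of the preceding remark, that $F_2$ restricted to $\Pro(\mathrm{Aff}\Et{X})$ agrees with $\pi_0$ of schemes. Hence $\Pro^{\mathrm{proj}}(\mathrm{Aff}\Et{X})_S\simeq X_S^{\mathrm{wc}}$. Note that for $S\in\Extr$ this condition is consistent with \cref{rec:wstrictly_wcontractible}.
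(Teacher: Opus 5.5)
Your proposal is correct and follows the paper's proof. The paper declares the first equivalence immediate by matching ``every effective epimorphism onto $P$ splits'' with ``every pro-\'etale cover of $Y$ splits'', and then compares the $S$-conditions (objects with $\pi_0=S$, maps inducing the identity on $\pi_0$) on both sides; your extra steps (effective epimorphisms in the pro-category are exactly the faithfully flat ind-\'etale surjections, plus the Bhatt--Scholze refinement of weakly \'etale covers by ind-\'etale ones) only make explicit what the paper leaves implicit, and your membership worry is harmless, since a weakly contractible $Y$ in $\ProEt{X}$ is a retract of such an ind-\'etale refinement and is therefore itself ind-\'etale over $X$.
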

\begin{proof}
    The first part is immediate from the definition of weakly contractible affine schemes by the property that every pro-\'etale surjection onto them admits a section. 
    The second part then follows by comparing the extra assumptions on objects and maps that are indicated by the subscript $S$: Both sides exactly restrict to those objects whose connected components identify with $S$ and those morphisms such that composition with $\pi_0$ is the identity on this space.
\end{proof}
\begin{proposition}\label[proposition]{prop:Gal_wcontrrings}
 Let $X$ be an affine scheme. For every $S\in \Extr$, composition with the global sections functor $\Gamma_S\colon \Sh(S,\Set)\rightarrow \Set$ induces an equivalence of categories
  \begin{align*}
  \Gal(X)(S) &\simeq \Pro^{\mathrm{proj}}(\mathrm{Aff}\Et{X})_S^{\op}\simeq ({X_{S}^{\mathrm{wc}}})^{\op}\period    
  \end{align*}
  The reverse direction is given by the functor described in \cref{lem:wstrictly_coherentfunctor}.    
\end{proposition}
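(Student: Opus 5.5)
The plan is to start from \cref{cor:alter_Gal}, which identifies $\Pt(X)(S)$ with $\Pro^{\mathrm{wp}}(X_{\et}^{\coh})_S^{\op}=\Pro^{\mathrm{wp}}(\mathrm{Aff}\Et{X})_S^{\op}$ via $\mathcal{O}\mapsto \Gamma_S\circ\mathcal{O}$. Since $\Gal(X)(S)\subset\Pt(X)(S)$ is the full subcategory of coherent functors, and $X^{\mathrm{wc}}_S\simeq\Pro^{\mathrm{proj}}(\mathrm{Aff}\Et{X})_S$ by \cref{lem:weakly_contractible_projective}, it remains to show one thing. A pretopos morphism $\mathcal{O}\colon X_{\et}^{\coh}\to\Sh(S,\Set)$ lands in locally constant constructible sheaves exactly when the associated pro-object $W$ (a pro-étale affine $Y\to X$ with $\pi_0(Y)=S$) is projective, i.e.\ weakly contractible. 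Both sides are full subcategories of the respective $S$-fibres, since morphisms on both sides are simply those inducing $\mathrm{id}_S$. Hence checking the equivalence on objects suffices.

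For the direction from weakly contractible to coherent, take $Y\in X^{\mathrm{wc}}_S$. The functor $G$ of \cref{lem:wstrictly_coherentfunctor} produces a coherent $f^*$ for $f\colon Y_{\mathrm{cl}}\to X$. I would check that under \cref{cor:alter_Gal} this functor corresponds to $Y$ itself. For $U\in\mathrm{Aff}\Et{X}$, the global sections $\Gamma_S(f^*U)$ are the $X$-maps $Y_{\mathrm{cl}}\to U$. Because $Y$ is $w$-strictly local, the pair $(Y,Y_{\mathrm{cl}})$ is henselian, so these agree with $X$-maps $Y\to U$ (\cite[Lemma 2.2.9]{MR3379634} and henselian lifting for étale maps), which is $\Hom_{\Pro}(Y,U)$. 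This shows that $G$ is a quasi-inverse on the subcategory of projective objects, and that their images are coherent.

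For the converse, let $\mathcal{O}$ be coherent with associated weakly projective $Y$. By \cref{rec:wstrictly_wcontractible}, and since $\pi_0(Y)=S\in\Extr$, it suffices to show that $Y$ is $w$-strictly local. The stalks of $\mathcal{O}$ at $s\in S$ are points of $X_{\et}$. By \cite[Proposition 6.3.12]{Ultracategories}, the connected component $Y_s$ is the corresponding local pro-object, namely a strict henselization $\Spec(\mathcal{O}^{\mathrm{sh}}_{X,x_s})$. So every component is strictly local with a unique closed point.

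The hard part is showing that the closed points form a closed subset $Y_{\mathrm{cl}}$, equivalently that $Y_{\mathrm{cl}}\to\pi_0(Y)$ is a homeomorphism. This is exactly where coherence should enter. For a non-coherent family, the images $x_s$ can jump (for example, a family of closed points degenerating to a generic one under an ultrafilter limit), and then the closed points are not closed. My first attempt is to use that $\mathcal{O}$ sends the constructible sheaves $j_!\ast$, for $j\colon V\hookrightarrow X$ a quasi-compact open, and $i_*\ast$, for closed immersions, to locally constant sheaves on $S$. Then the locus $\{s : x_s\in Z\}$ is clopen for every constructible $Z$. This makes $s\mapsto x_s$ a continuous map $S\to X_{\mathrm{cons}}$. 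I would then identify $Y_{\mathrm{cl}}$ with the image of the induced map $S\to Y$ of closed points. This image should be a closed subset, being the compact image in the constructible topology, hence pro-constructible, that is stable under specialization within $Y$. Once $Y$ is $w$-local with strictly henselian local rings at closed points, it is $w$-strictly local, hence weakly contractible. This completes the identification with $X^{\mathrm{wc}}_S$ and, via $A=\mathcal{O}(Y)$, with $S$-connected $w$-contractible rings.
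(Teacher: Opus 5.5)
Your reduction to objects is sound, since both sides are full subcategories of the $S$-fibres in \cref{cor:alter_Gal}. Your treatment of weakly contractible $\Rightarrow$ coherent, via the henselian pair $(Y,Y_{\mathrm{cl}})$, is the paper's computation $h^*(F)(Y_{\mathrm{cl}})=F(Y)$ in another guise.

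\textbf{The gap.} It sits in the step you flag as the hard part. From coherence you derive only that $s\mapsto x_s$ is continuous into $X_{\mathrm{cons}}$. You then assert that the image of $s\mapsto y_s$ is compact in $Y_{\mathrm{cons}}$. That needs continuity of $s\mapsto y_s$ into $Y_{\mathrm{cons}}$, and nothing you wrote ties the points $y_s\in Y$ to their images $x_s\in X$ strongly enough to give it. A smaller slip: $i_*\ast$ is the terminal sheaf and carries no information. What you need is only that $\mathcal{O}(j_!\ast)$ is a subterminal locally constant constructible sheaf, i.e.\ a clopen subset of $S$; its complement is then clopen as well.

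\textbf{How to close it.} The missing link is available. Over $X$ you have $Y_s\cong\Spec(\mathcal{O}^{\mathrm{sh}}_{X,x_s})$, and the closed fibre of a strict henselization is a single point. Hence a point $y\in Y$ is closed if and only if $p(y)=x_{\pi(y)}$, where $p\colon Y\to X$ and $\pi\colon Y\to S$ are the structure maps. So $Y_{\mathrm{cl}}$ is the preimage of the graph of $s\mapsto x_s$ under the continuous map $(p,\pi)\colon Y_{\mathrm{cons}}\to X_{\mathrm{cons}}\times S$. That graph is closed because $X_{\mathrm{cons}}$ is Hausdorff. Therefore $Y_{\mathrm{cl}}$ is pro-constructible, and being trivially stable under specialization it is closed.

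Alternatively, take a quasi-compact open $V$ of some $U\in\mathrm{Aff}\Et{X}$ and apply coherence to the lcc subsheaf $\mathcal{O}(V)\subseteq\mathcal{O}(U)$. Pulling it back along the global section of $\mathcal{O}(U)$ given by a projection $Y\to U$ yields continuity of $s\mapsto y_s$ into $Y_{\mathrm{cons}}$ directly.

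\textbf{Comparison with the paper.} With this repair your argument is correct, but for coherent $\Rightarrow$ projective it takes a genuinely different route. The paper never examines the geometry of $Y$. It notes that the extension of $\Gamma_S\circ f^*$ to $\Pro(X_{\et}^{\mathrm{coh}})$ is $\Pro(\Gamma_S)\circ\Pro(f^*)$. Then $\Pro(f^*)$ preserves effective epimorphisms because these are levelwise in pro-categories. And $\Pro(\Gamma_S)$ preserves them because $\Pro(\Sh(S,\Set)^{\mathrm{coh}})$ embeds in $\CondSet_{/S}$ and $S$ is projective in $\CondSet$. Hence $Y$ is projective, i.e.\ weakly contractible. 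Coherence enters only by landing in $\Pro(\Sh(S,\Set)^{\mathrm{coh}})$, and extremal disconnectedness only through projectivity of $S$. The argument is formal and works verbatim for any coherent topos.

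Your route instead goes through identifying components with strict henselizations and the Bhatt--Scholze criterion ($w$-strictly local with $\pi_0\in\Extr$). It costs more, but it shows what coherence does geometrically. It upgrades the loci $\{s : x_s\in V\}$ from open to clopen, which is what forces $Y_{\mathrm{cl}}\cong S$ to be closed. This is exactly what fails for non-coherent families.
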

\begin{proof}
Let $f^*\in \Gal(X)(S)$. Then $f^*$ is a coherent functor $\Sh(\Et{X},\Set)\rightarrow \Sh(S,\Set)$, which means that it restricts to a morphism $X_{\et}^{\coh}\rightarrow \Sh(S,\Set)^{\mathrm{coh}}$ on pretopoi of coherent objects. Recall that $\Sh(S, \Set)^{\coh}$ identifies with the category of locally constant constructible sheaves by $\Sh(S,\Set)$ being a profinite topos \cite[Example E.2.4.6., Proposition E.2.5.6]{SAG}.
The equivalence in \cref{cor:alter_Gal} we are restricting from is given by post-composition with the global sections functor $\Gamma_S\colon \Sh(S,\Set)\rightarrow \Set$. We are aiming to show that the (scheme representing the) resulting functor 
$X_{\et}^{\mathrm{coh}}\rightarrow \Set$ constitutes a projective object in $\Pro(\mathrm{Aff}\Et{X})$ or, equivalently, in $\Pro(X_{\et}^{\mathrm{coh}})$.
Reformulating the definition of projectivity, this means nothing else than that its extension to $\Pro(X_{\et}^{\mathrm{coh}})$ preserves effective epimorphisms.
This extension coincides with the composition
\begin{align}\label{eq:composition_functor}
 \Pro(X_{\et}^{\mathrm{coh}}) \xrightarrow{\Pro(f^*)}\Pro(\Sh(S,\Set)^{\mathrm{coh}})\xrightarrow{\Pro(\Gamma_S)} \Set \period   
\end{align}
Recall that a pretopos morphism preserves effective epimorphisms and that effective epimorphisms in pro-categories are effective epimorphisms level-wise \cite[Proposition 6.1.15]{Ultracategories}.
Hence, the property of preserving effective epimorphisms extends from $X_{\et}^{\mathrm{coh}}\rightarrow \Sh(S,\Set)^{\mathrm{coh}}$ to the unique cofiltered limit preserving extension $\Pro(f^*)$ in (\ref{eq:composition_functor}).
The (extended) global sections functor $\Pro(\Gamma_S)$ also preserves effective epimorphisms.
This can, for example, be argued by the fact that $\Pro(\Sh(S,\Set)^{\mathrm{coh}})$ is a full subcategory of the slice category $\CondSet_{/S}$, e.g., see \cite[Remark 3.27, Proposition 3.32]{MR4574234}.
As $S$ is an extremally disconnected profinite set, it is known to be a projective object in $\CondSet$, compare \cref{rem:extdis_projective}.
This property is retained after restriction to the subcategory $\Pro(\Sh(S,\Set)^{\mathrm{coh}})$ and thus the evaluation on $S$ preserves effective epimorphisms.
We have argued that restriction of the equivalence given in \cref{cor:alter_Gal} $$\Pt(X)(S)\simeq \Pro^{\mathrm{wp}}(X_{\et}^{\coh})_S^{\mathrm{op}}\simeq \Pro^{\mathrm{wp}}(\mathrm{Aff}\Et{X})_S^{\mathrm{op}}$$ to $\Gal(X)(S)\subset \Pt(X)(S)$ lands in the subcategory of projective objects $\Pro^{\mathrm{proj}}(\mathrm{Aff}\Et{X})_S^{\mathrm{op}}$.
It remains to see whether every projective object is the image of a coherent functor.
For that, we show that for a scheme in $\Pro^{\mathrm{proj}}(\mathrm{Aff}\Et{X})_S\simeq {X_{S}^{\mathrm{wc}}}$ the coherent functor defined as in \cref{lem:wstrictly_coherentfunctor} provides a preimage.
In other words, we want to prove that for a weakly contractible affine scheme $Y\rightarrow X$ with $\pi_0(Y)=S$ composition of the by $Y_{\mathrm{cl}}\subset Y \rightarrow X$ induced coherent geometric morphism 
$$f^*\colon \Sh(\Et{X}, \Set) \rightarrow \Sh(\Et{Y_{\mathrm{cl}}}, \Set)\simeq \Sh(S,\Set)$$ with the global sections functor $\Gamma_S$ is represented by the scheme $Y$.
To make this precise, by viewing $Y\in \Sh(\Pro(\mathrm{Aff\Et{X}}),\Set)=\Sh(\ProEt{X}, \Set) $ and $\Sh(\Et{X}, \Set)\subset \Sh(\ProEt{X}, \Set)$, we want $$\Gamma \circ f^*=\Hom_{\Sh(\ProEt{X}, \Set)}(Y,-).$$
By definition, $f^*$ factors over $\Sh(\Et{Y}, \Set)$. We extend $f^*$ to a functor on pro-\'etale topoi
$$\Sh(\ProEt{X}, \Set) \xrightarrow{g^*} \Sh(\ProEt{Y}, \Set) \xrightarrow{h^*} \Sh(\ProEt{Y_{\mathrm{cl}}}, \Set)$$ by first restricting to coherent objects, then extending to pro-objects and finally taking sheaves with respect to the coherent topology, compare \cite[Example 7.1.7]{Ultracategories}.
As $Y\rightarrow X$ is pro-\'etale, the functor $g^*$ is just restriction of a sheaf to the subsite $\ProEt{Y}\subset \ProEt{X}$. Recall that $Y_{\mathrm{cl}}\hookrightarrow Y$ is a closed immersion. For closed immersions of affine schemes, the induced geometric morphism on pro-\'etale topoi is well-studied, compare \stacks{09A9}.
In particular, we can deduce from \stacks{09AC} that for every $F\in \Sh(\ProEt{Y}, \Set)$ it is $h^*(F)(Y_{\mathrm{cl}})=F(Y)$ as by assumption $Y$ is affine, weakly contractible and every point of it specializes to a point in $Y_{\mathrm{cl}}$.
Combining all of this, the image of a sheaf $F \in \Sh(\Et{X}, \Set)\subset \Sh(\ProEt{X}, \Set)$ under $\Gamma \circ f^*$ is given by $$(\Gamma \circ f^*)(F)=(\Gamma \circ h^* \circ g^*)(F)=(h^*\circ g^*)(F)(Y_{\mathrm{cl}})=g^*(F)(Y)=F(Y)\period$$ Using the Yoneda identification $F(Y)=\Hom_{\Sh(\ProEt{X}, \Set)}(Y,F)$, this finishes the proof.
\end{proof}
We are basically done with proving our aimed result:
\begin{proof}[Proof of \cref{prop:Galois_weaklycontractible}]
By passing in the last proposition from ${X_{S}^{\mathrm{wc}}}$ to its opposite category of rings, one obtains 
$$\Gal(X)(S)\simeq \{\text{$S$-connected $w$-contractible rings over $X$ + $S$-connected maps}\} $$
giving the statement of \cref{prop:Galois_weaklycontractible}.   
\end{proof}
We finish this subsection with showing that for a local ring $A$ and $S=\lim_{i \in I} S_i\in \Extr$, the ring 
$$A_S\colonequals \colim_{i \in I^{\op}} \prod_{s\in S_i} A^{\mathrm{sh}}$$
underlying the scheme $\Spec(A)_S$ defined in \cref{ex:stricthensel_wstrictly} takes over a special role in $\Gal(X)(S)$.
\begin{proposition}\label[proposition]{prop:alter_Gal_prop}
Let $A$ be a local ring and $X=\Spec(A)$. The ring $$A_S\colonequals \colim_{i \in I^{\op}} \prod_{s\in S_i} A^{\mathrm{sh}}$$ is a weakly initial object in $\Gal(X)(S)$, i.e., it admits a map into every other object.
Moreover, it is initial if and only of $A$ is strictly henselian.
\end{proposition}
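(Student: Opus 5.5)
We work throughout with the identification $\Gal(X)(S)\simeq$ ($S$-connected $w$-contractible rings over $A$ with $S$-connected maps) from \cref{prop:Galois_weaklycontractible}. By \cref{ex:stricthensel_wstrictly}, $\Spec(A_S)$ is $w$-strictly local with $\pi_0=S\in\Extr$, so by \cref{rec:wstrictly_wcontractible} it is weakly contractible and $A_S$ is an object of this category. It lies over $X$ via $A\to A^{\mathrm{sh}}\to A_S$.

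\emph{Weak initiality.} Let $B$ be an $S$-connected $w$-contractible $A$-algebra. The plan is to build an $A$-algebra map $A_S\to B$ that is the identity on $\pi_0$. Every connected component $\Spec(B_s)$, $s\in S$, is local with unique closed point $y_s$. Its local ring is strictly henselian, and by \cref{rem:localring_connectedcomp} it is the whole component. The image of $y_s$ in $X$ need not be the closed point, so we cannot use locality of $A$ on closed points directly.

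Instead we use that $A_S$ is pro-étale over $A$, being a colimit of finite products of ind-étale $A$-algebras, and that $B$ is projective. Concretely:
\begin{enumerate}
\item Write $A^{\mathrm{sh}}=\colim A_\lambda$ over pointed étale neighbourhoods $(A_\lambda,\mathfrak m_\lambda)$ of the closed point, with $A_\lambda$ local-ish étale $A$-algebras having a point over $\mathfrak m$ with trivial residue extension to $\kappa(m)^{\mathrm{sep}}$.
\item Reduce to producing compatible maps $\prod_{s\in S_i}A_\lambda\to B$ that send the idempotent for $s$ to the idempotent of the clopen $U_s\subset \Spec B$ given by the preimage of $s$ under $S\to S_i$.
\item On $U_s$ (a $w$-contractible ring), the map $\Spec(A_\lambda)\times_X U_s\to U_s$ is étale. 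It is surjective because $A_\lambda\to A$ is faithfully flat near the closed point. More precisely, every point of $X$ generalizes the closed point $\mathfrak m$, so an étale neighbourhood of $\mathfrak m$ whose image contains $\mathfrak m$ has image containing all of $\Spec A$ (images of flat maps are stable under generalization).
\item Hence this étale surjection has a section by \cref{def:wlocal}. Passing to the filtered colimit is where care is needed: sections need not be compatible.
\end{enumerate}
To deal with compatibility, we apply surjectivity once to the ind-étale faithfully flat map $B\to B\otimes_A A^{\mathrm{sh}}$, which is faithfully flat by the same generalization argument. $w$-contractibility of $B$ gives a retraction, and hence an $A$-map $A^{\mathrm{sh}}\to B$. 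Composing with the diagonal over $\prod$ and using that idempotents of $B$ match $S$ yields $A_S=C(S,A^{\mathrm{sh}})$-style maps. More cleanly, we take $A_S\to A^{\mathrm{sh}}\otimes_{\ZZ}\mathrm{LC}(S,\ZZ)\to B$, where $\mathrm{LC}(S,\ZZ)\to B$ sends locally constant functions to idempotents. The composite is the identity on $\pi_0$, since $A_S\cong A^{\mathrm{sh}}\otimes\mathrm{LC}(S,\ZZ)$ by commuting the colimit with finite products.

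\emph{Initiality iff strictly henselian.} If $A=A^{\mathrm{sh}}$, then an $S$-connected map $A_S=A\otimes \mathrm{LC}(S,\ZZ)\to B$ is determined by the structure map $A\to B$ and by the idempotents, which are forced by the identity on $\pi_0$. So it is unique and $A_S$ is initial. Conversely, if $A$ is not strictly henselian, maps $A^{\mathrm{sh}}\to A^{\mathrm{sh}}$ over $A$ form a nontrivial group $\mathrm{Gal}(\kappa(m)^{\mathrm{sep}}/\kappa(m))$, or at least a set of several automorphisms, whenever $A\neq A^{\mathrm{sh}}$. Tensoring a nontrivial automorphism with $\mathrm{LC}(S,\ZZ)$ gives two distinct $S$-connected endomorphisms of $A_S$, so $A_S$ is not initial. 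The main obstacle is this last claim: we must check that $A\ne A^{\mathrm{sh}}$ forces a nontrivial $A$-automorphism of $A^{\mathrm{sh}}$. This follows because $\mathrm{Aut}_A(A^{\mathrm{sh}})\cong\mathrm{Gal}(\kappa^{\mathrm{sep}}/\kappa)$, and that group is trivial iff $\kappa$ is separably closed. In that case $A^{\mathrm{sh}}=A^h$, so we must instead exhibit non-uniqueness when $A$ is not henselian. Here we would use a connected finite étale-type neighbourhood with two points over $\mathfrak m$... or argue via $\Hom$ into a suitable $B$ (e.g. $B=A_S$ itself composed with different sections). This is the step I expect to require the most care.
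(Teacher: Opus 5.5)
Your weak-initiality argument is correct, and it is more direct than the paper's. Retracting the faithfully flat ind-\'etale map $B\to B\otimes_A A^{\mathrm{sh}}$ gives an $A$-algebra map $A^{\mathrm{sh}}\to B$; faithful flatness is just base change of $A\to A^{\mathrm{sh}}$, so the generalization argument is not needed. Combining this map with $\mathrm{LC}(S,\ZZ)\to B$ through $A_S\cong A^{\mathrm{sh}}\otimes_{\ZZ}\mathrm{LC}(S,\ZZ)$ gives an $S$-connected map, so your steps 1--4 can be deleted. The paper instead gets $\colim_i\prod_{s\in S_i}A\to B$ from the adjunction between $\pi_0$ and $S\mapsto X_S$. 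It then composes with a map $A_S\to\colim_i\prod_{s\in S_i}A$ induced by ``$A^{\mathrm{sh}}\to A$'', but no such ring map exists in general. The step actually needed there is a lift of $\Spec(B)\to X_S$ along the pro-\'etale cover $\Spec(A_S)=X_S\times_X\Spec(A^{\mathrm{sh}})\to X_S$, and that lift is exactly your retraction. Your argument that $A_S$ is initial when $A=A^{\mathrm{sh}}$ agrees with the paper's.

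The gap is the converse when the residue field $\kappa$ of $A$ is separably closed but $A$ is not henselian. You leave this case open, and your suggested witness $B=A_S$ cannot work there. An $S$-connected endomorphism of $A_S$ over $A$ is determined by an $A$-map $\psi\colon A^{\mathrm{sh}}\to A_S$. Evaluating $\psi$ at any $s\in S$ gives an $A$-endomorphism of $A^{\mathrm{sh}}$, i.e.\ an element of $\mathrm{Gal}(\kappa^{\mathrm{sep}}/\kappa)=1$, so $A_S$ has only the identity endomorphism. The paper's own sketch only uses $A$-automorphisms of $A^{\mathrm{sh}}$ and has the same blind spot. Non-uniqueness must therefore be detected at a non-closed point.

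The missing input is that $\Spec(A^{\mathrm{sh}})\to\Spec(A)$ is not a universal homeomorphism when $A\neq A^{\mathrm{sh}}$. This is the contrapositive of \cref{prop:localring_homeo} (idempotent lifting); it also follows from the fact that a weakly \'etale universal homeomorphism is an isomorphism. The map is faithfully flat, quasi-compact, and has separable algebraic residue field extensions, so it then fails to be universally injective. Hence some $y\in\Spec(A)$ has either two preimages or a preimage $y'$ with $\kappa(y')\neq\kappa(y)$. As in the proof of \cref{prop:initalconditions}, a point over $y$ together with a $\kappa(y)$-embedding of its residue field into $\kappa(y)^{\mathrm{sep}}$ determines an $A$-algebra map $A^{\mathrm{sh}}\to\mathcal{O}_{X,y}^{\mathrm{sh}}$. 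This gives two such maps $\phi_1\neq\phi_2$. Then $\phi_1\otimes\mathrm{id}$ and $\phi_2\otimes\mathrm{id}$ are two distinct $S$-connected maps from $A_S$ to $\mathcal{O}_{X,y}^{\mathrm{sh}}\otimes_{\ZZ}\mathrm{LC}(S,\ZZ)$. The latter is an object of $\Gal(X)(S)$ by \cref{ex:stricthensel_wstrictly} applied to the local ring $\mathcal{O}_{X,y}$. This requires $S\neq\emptyset$: $\Gal(X)(\emptyset)$ is the trivial category, so the ``only if'' must be read for nonempty $S$.
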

\begin{proof}
First note that by \cref{ex:stricthensel_wstrictly} and \cref{prop:Galois_weaklycontractible}, the ring $A_S$ provides an object in $\Gal(X)(S)$. Let $B\in \Gal(X)(S)$ an $S$-connected $w$-contractible ring over $X=\Spec(A)$. The association $S\mapsto X_S\colonequals\lim_{i \in I} \coprod_{s \in S_i} X $ is a limit preserving functor from profinite sets to affine schemes pro-\'etale over $X$, see \cite[Example 4.1.9]{MR3379634}. It admits a left adjoint given by the connected component functor, e.g., see \cite[Lemma 2.2.8]{MR3379634} or \cite[Lemma A.5.26]{CatrinsThesis}. Thus the identity map $\pi_0(\Spec(B))=S\rightarrow S$ corresponds to a map $\Spec(B) \rightarrow X_S$ which in turn corresponds to a ring map $\colim_{i \in I^{\op}} \prod_{s \in S_i} A\rightarrow B$. Now the local ring map $A^{\mathrm{sh}}\rightarrow A$ coming from strict henselization lifts to a map $A_S\rightarrow \colim_{i \in I^{\op}} \prod_{s \in S_i} A$. Composition then defines a map $A_S \rightarrow B$ in $\Gal(X)(S)$.
For the stricter version, we assume $A$ to be a strictly henselian local ring. Then we have $X_S=\Spec(A_S)$ and we directly obtain the map $A_S \rightarrow B$ by the adjunction described above. Because of the adjunction, it is the unique map inducing the identity on $\pi_0$ and thus lying in $\Gal(X)(S)$. If $A$ is not strictly henselian, every automorphism $a$ of the strict henselization $A^{\mathrm{sh}}$ fixing the ring $A$ in the sense that the composition $A^{\mathrm{sh}}\xrightarrow{a} A^{\mathrm{sh}} \rightarrow A$ coincides with the given map $A^{\mathrm{sh}}\rightarrow A$ yields another map from $A_S$ to $B$.
\end{proof}
Roughly speaking, the fact that for a general local ring we only obtain a \textit{weakly} initial object is due to the existence of non-trivial automorphisms of the closed point corresponding to objects in the absolute Galois group of its (non-separably closed) residue field.
\begin{remark}
    One can show that, in the situation of \cref{prop:alter_Gal_prop}, the ring $A_S$ even provides a (weakly) initial object in the category $\Pt(X)(S)$. This is in line with the by Haine in \cite[Theorem 2.20]{Classifyinganima} proven fact that for all $S\in \Extr$ the inclusion $\Gal(X)(S)\hookrightarrow \Pt(X)(S)$ admits a left adjoint inducing an equivalence of classifying condensed anima $$\CondShape{X}=B^{\cond}\Gal(X)\simeq B^{\cond}\Pt(X)\period $$
\end{remark}
\section{On condensed contractible schemes}\label[section]{sec:condensed_contractible}
We first recap how the notions of connected and contractible categories naturally translate to the condensed setting. Then we collect examples of schemes which are known to be \textit{condensed contractible}, i.e., the condensed homotopy type is contractible.
\begin{recollection}
An \category $\Ccal$ is \textit{connected}, respectively, \textit{contractible,} if the classifying anima $\Bup\Ccal \in \Ani$ is a connected, respectively, (weakly) contractible Kan complex. 
\end{recollection}
\begin{definition}
We say that a condensed \category $\Ccal$ is \textit{connected}, if for all $K\in \Extr$ the \categories $\Ccal(K)\in \Catinfty$ are connected.
Equivalently, if the classifying condensed anima $\Bup\Ccal\in \CondAni$ is \textit{connected}, i.e., its condensed connected components are trivial $\pi_0(\Bup\Ccal)=\ast$.
Similarly, a condensed category $\Ccal \in \Catinfty$ is \textit{contractible} if the evaluation $\Ccal(K)$ at all $S \in \Extr$ is contractible, i.e., the anima $\Bup\Ccal(K)=\ast$ are contractible.
\end{definition}
\begin{definition}\label[definition]{def:condensed_contractible}
Let $X$ be a qcqs scheme. 
\begin{enumerate}
    \item We call the scheme $X$ \textit{condensed connected} if the condensed connected components are trivial, i.e., we have $\pizerocond(X)=\ast$.
    \item We call the scheme $X$ \textit{condensed contractible} if the condensed homotopy type is contractible, i.e., it is $\CondShape{X}=\ast.$
\end{enumerate}
\end{definition}
\begin{remark}
Let $X$ be a qcqs scheme. By definition, $X$ is condensed connected, respectively, condensed contractible if and only if the condensed category $\Gal(X)\in \Cond(\Cat)$ is connected, respectively, contractible. Whenever $\CondShape{X}$, respectively $\Gal(X)$, is contractible it is connected. The reverse direction is not true.
\end{remark}
\begin{remark}\label[remark]{rem:trivial_on_fundgroups}
Under notice of \cref{rem:conservative_fundamental_groups}, a scheme is condensed contractible if and only if it is condensed connected an all its condensed fundamental groups (which then are independent of the base point) are trivial.  \end{remark}
\begin{proposition}\label{cor:trivialconshape}
Let $X$ be a qcqs scheme. The following assertions are equivalent:
\begin{itemize} 
\item[(i)] The scheme $X$ is condensed contractible.
\item[(ii)] The pullback functor $$f^*\colon\CondAni\simeq \Functs(\pt,\ICond(\Ani)) \rightarrow \Functs(\Gal(X),\ICond(\Ani))\simeq\Xproethyp$$ along $\Gal(X) \rightarrow \pt$ is fully faithful.
\end{itemize}
\end{proposition}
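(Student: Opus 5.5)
The plan is to reduce the statement to a general fact about condensed \categories and their classifying condensed anima, namely a condensed analogue of the classical result that a functor $\Ccal\to\pt$ has fully faithful pullback on $\Ani$-valued diagrams if and only if $\Bup\Ccal\simeq\pt$. I will take as given the identifications in the statement: $\Functs(\Gal(X),\ICond(\Ani))\simeq\Xproethyp$ (exodromy, \cite{Exodromy}), and $\Functs(\pt,\ICond(\Ani))\simeq\CondAni$.

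\textbf{Step 1: localization.} Since $\ICond(\Ani)$ is a condensed $\infty$-groupoid-valued target in the appropriate sense (all morphisms between its objects over $S$ act through the core), every continuous functor $\Gal(X)\to\ICond(\Ani)$ that inverts all morphisms factors through $\Bcond\Gal(X)$. Concretely, I will show that the full subcategory of $\Functs(\Gal(X),\ICond(\Ani))$ spanned by the \emph{locally constant} functors, i.e.\ those sending every morphism of $\Gal(X)(S)$ to an equivalence in $\CondAni_{/S}$ for all $S\in\Extr$, identifies with
\[
\Functs(\Bcond\Gal(X),\ICond(\Ani))\simeq \CondAni_{/\CondShape{X}}.
\]
The first equivalence follows from the left adjointness of $\Bcond$ to $\CondAni\hookrightarrow\CondCat$ (applied levelwise to the end in \cref{rec:contfunc}). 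The second is the straightening identity for the condensed universe $\ICond(\Ani)$, which holds because $\ICond(\Ani)$ classifies condensed anima over a base and $\CondAni$ is an \topos. This is the step I expect to be the main obstacle: the universal property of $\ICond(\Ani)$ must be checked in the condensed setting, where the relevant statement is essentially \cite[\S13]{Exodromy} or \cite[Section 3]{The_condensed_homotopy_type}. I will cite these rather than reprove them.

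\textbf{Step 2: factorization of $f^*$.} The pullback $f^*$ factors as
\[
\CondAni\xrightarrow{\ p^*\ }\CondAni_{/\CondShape{X}}\hookrightarrow \Xproethyp,
\]
where $p\colon\CondShape{X}\to\pt$ and the second functor is the fully faithful inclusion from Step~1. Hence $f^*$ is fully faithful if and only if $p^*=\CondShape{X}\times(-)$ is fully faithful.

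\textbf{Step 3: the slice criterion in an \topos.} If $\CondShape{X}=\pt$, then $p^*$ is an equivalence, which gives (i)$\Rightarrow$(ii). Conversely, suppose $p^*$ is fully faithful. Then the unit $A\to p_*p^*A=\underline{\mathrm{Map}}(\CondShape{X},A)$ is an equivalence for all $A$. Taking $A=\CondShape{X}$ itself, the identity and the constant maps at points must agree after the evaluation. More cleanly, apply full faithfulness to
\[
\Map(\pt,\CondShape{X})\simeq\Map_{/\CondShape{X}}(\CondShape{X},\CondShape{X}\times\CondShape{X}).
\]
The diagonal is a section of the projection, so the diagonal is the pullback of some map $\pt\to\CondShape{X}$, i.e.\ a section lying in the essential image of $p^*$. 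This forces $\CondShape{X}\to\pt$ to be an equivalence: it admits a section $x$, and $\mathrm{id}\simeq x\circ p$ because the diagonal equals $\mathrm{const}_x\times\mathrm{id}$. Therefore $\CondShape{X}\simeq\pt$, which gives (ii)$\Rightarrow$(i).
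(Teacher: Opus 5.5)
Your proof is correct. Steps 1--2 are exactly the paper's argument for (i)$\Rightarrow$(ii). The paper also writes $f^*$ as $p^*$ followed by the fully faithful functor $\CondAni_{/\CondShape{X}}\simeq\Functs(\Bcond\Gal(X),\ICond(\Ani))\xrightarrow{b^*}\Functs(\Gal(X),\ICond(\Ani))\simeq\Xproethyp$, citing Wolf for the straightening equivalence and for the identification with $\Xproethyp$.

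The difference is in (ii)$\Rightarrow$(i). The paper does not reuse the factorization there. It appeals to the description of $\CondShape{X}$ as the shape of $\Xproethyp$ relative to $\CondAni$: full faithfulness of $f^*$ collapses the relative shape to $\ast$. You instead observe that $f^*$ fully faithful forces $p^*=\CondShape{X}\times(-)$ to be fully faithful, because the second factor is fully faithful. You then conclude with the diagonal/Yoneda argument in the slice topos.

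What your route buys is self-containedness. It uses only the two inputs already needed for (i)$\Rightarrow$(ii), and it avoids the separate identification of $\CondShape{X}$ with the relative shape from the joint paper. The paper's route is a one-liner once that shape machinery is granted.

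Your Step 3 can be trimmed further. Full faithfulness of $p^*$ at $A=\ast$ says that precomposition with $p$ induces $\mathrm{Map}(\ast,B)\simeq\mathrm{Map}(\CondShape{X},B)$ for every $B$. Hence $p$ is an equivalence by Yoneda, and the vague sentence about the unit can be dropped.

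One justification in Step 1 should be corrected. $\ICond(\Ani)$ is not groupoid-valued, since $\CondAni_{/S}$ has non-invertible morphisms. Also, the adjunction between $\Bcond$ and $\CondAni\hookrightarrow\CondCat$ only handles targets that are condensed anima, so it does not by itself give full faithfulness of $b^*$. The right reason is that each $\Gal(X)(S)\to\Bup\Gal(X)(S)$ is the localization at all morphisms. So restriction $\Fun(\Bup\Gal(X)(S),\mathcal{D})\to\Fun(\Gal(X)(S),\mathcal{D})$ is fully faithful for every $\infty$-category $\mathcal{D}$. This survives the end defining $\Functs$, because limits of fully faithful functors are fully faithful. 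The identification of the essential image with the locally constant functors is not needed for the argument.
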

\begin{proof}
By \cite[Corollary~1.2]{MR4574234}, there is a natural equivalence
\begin{equation*}
	\Functs(\Gal(X),\ICond(\Ani)) \simeq \Xproethyp \period
\end{equation*}
and precomposition with the localization map $ b \colon \Gal(X) \to \BcondGal(X) $ defines a fully faithful functor
	\begin{equation*}
		\begin{tikzcd}
			\Functs(\BcondGal(X),\ICond(\Ani)) \arrow[r, "b^*"] & \Functs(\Gal(X),\ICond(\Ani)) \comma
		\end{tikzcd}
	\end{equation*}
compare \cite[Proof of Proposition 3.38]{The_condensed_homotopy_type}.
Further using the Grothendieck construction
$$ \CondAni_{/\CondShape{X}}=\Functs(\BcondGal(X),\ICond(\Ani))$$ proven in \cite[Corollary 3.20]{MR4574234}, the induced
functor $$\Psi^*\colon \CondAni_{/\CondShape{X}}\rightarrow \Xproethyp$$ is always fully faithful. Thus, statement (i) implies (ii). The other implication is also true as it follows by (ii) and the description of $\CondShape{X}$ as the relative shape over $\CondAni$, see \cref{rem:relative_shape}, that $\CondShape{X}=\ast.$
\end{proof}
We obtain a statement on condensed connectedness by taking the $0$-truncated version of the proposition.
\begin{corollary} Let $X$ be a qcqs scheme. The following assertions are equivalent.
\begin{itemize}
    \item[(i)] The scheme $X$ is condensed connected.
    \item[(ii)] The ($0$-truncated) pullback functor 
    $f^*_{\leq 0}\colon \CondSet \rightarrow \Sh(X_{\proet},\Set)$ is fully faithful.
\end{itemize}
\end{corollary}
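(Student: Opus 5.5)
The plan is to deduce the corollary from \cref{cor:trivialconshape} by applying $0$-truncation to every object in its proof. First I would recall the equivalences used there: $\Functs(\Gal(X),\ICond(\Ani))\simeq\Xproethyp$ and $\CondAni_{/\CondShape{X}}\simeq\Functs(\BcondGal(X),\ICond(\Ani))$. Their $0$-truncated versions come from replacing $\ICond(\Ani)$ by $\ICond(\Set)$, which is the levelwise full subcategory of $0$-truncated objects. This yields $\Functs(\Gal(X),\ICond(\Set))\simeq\Sh(X_{\proet},\Set)$, using that the $0$-truncated objects of $\Xproethyp$ are exactly the sheaves of sets. It also yields $\Functs(\BcondGal(X),\ICond(\Set))\simeq\CondSet_{/\pizerocond(X)}$. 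The second equivalence holds because a map from a condensed anima $E$ to $\CondShape{X}$ with $0$-truncated fibres corresponds to a map from a condensed set to $\pi_0(\CondShape{X})=\pizerocond(X)$. Equivalently, functors from a condensed anima into the condensed $1$-category $\ICond(\Set)$ factor through its $0$-truncation.

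With these identifications, the truncated pullback $f^*_{\leq0}$ factors as
\[
\CondSet \xrightarrow{\;p^*\;} \CondSet_{/\pizerocond(X)} \xrightarrow{\;\Psi^*_{\leq0}\;} \Sh(X_{\proet},\Set),
\]
where $p\colon\pizerocond(X)\to\ast$. The functor $\Psi^*_{\leq0}$ is the restriction of the fully faithful functor $\Psi^*$ from the proof of \cref{cor:trivialconshape} to full subcategories, so it is fully faithful. Consequently, $f^*_{\leq0}$ is fully faithful if and only if $p^*\colon\CondSet\to\CondSet_{/T}$, $A\mapsto A\times T$, is fully faithful, where $T=\pizerocond(X)$.

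This gives (i)$\Rightarrow$(ii) at once, since $T=\ast$ makes $p^*$ an equivalence. For (ii)$\Rightarrow$(i), assume $p^*$ is fully faithful. Taking $A=\ast$ and $A=\{0,1\}$ gives $\Hom_{\CondSet}(\ast,\{0,1\})\cong\Hom_{/T}(T,\{0,1\}\times T)=\Hom_{\CondSet}(T,\{0,1\})$. So $T$ admits only the two constant maps to $\{0,1\}$, and in particular $T\neq\emptyset$.

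The remaining step, and the one I expect to be the main obstacle, is to show that $T=\ast$. Knowing that $T$ is nonempty and has no nontrivial clopen decomposition is not enough for an arbitrary condensed set. I would therefore use full faithfulness on $\Hom(\ast,A)\to\Hom_{\CondSet}(T,A)$ for every $A$, which says that every map out of $T$ is constant. Choosing $A=T$ shows that $\mathrm{id}_T$ factors through $\ast$. Since $T$ is nonempty, $T\cong\ast$. Alternatively, this step can be read off from $\pi_0$-level statements of the relative shape description in \cref{rem:relative_shape}.
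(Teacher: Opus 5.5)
Your overall strategy is the paper's: the paper proves this corollary only by remarking that it is the $0$-truncated version of \cref{cor:trivialconshape}. However, one step in your proposal is false as stated. The identification $\Functs(\BcondGal(X),\ICond(\Set))\simeq\CondSet_{/\pizerocond(X)}$ is wrong, and so is its justification. Functors from a (condensed) anima into a $1$-category factor through its $1$-truncation, not its $0$-truncation. Under the Grothendieck construction, $\Functs(\BcondGal(X),\ICond(\Set))$ is $(\CondAni_{/\CondShape{X}})_{\leq 0}$, the category of $0$-truncated maps $E\to\CondShape{X}$ (condensed local systems of sets), and it sees $\pionecond$.

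A concrete counterexample: take $X=\Spec(k)$ with $k$ not separably closed. Then $\Gal(X)\simeq\Bup\Gal_k$ is levelwise a groupoid, so your identifications would make $\Psi^*_{\leq 0}$, and hence $f^*_{\leq 0}$, an equivalence $\CondSet\simeq\Sh(\Spec(k)_{\proet},\Set)$. That is false. The sheaf represented by $\Spec(k')$, for a nontrivial finite separable extension $k'/k$, is nonempty but has no section over $\Spec(k)$. In contrast, if $A\neq\emptyset$ then $A(\ast)\neq\emptyset$, and its elements give global sections of $f^*A$; and $A=\emptyset$ gives $f^*A=\emptyset$. So this sheaf is not of the form $f^*A$.

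The damage is local, because you only ever use full faithfulness.

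\begin{itemize}
\item Replace $p^*$ by $q^*\colon\CondSet\to(\CondAni_{/\CondShape{X}})_{\leq 0}$, $A\mapsto A\times\CondShape{X}$. Then $f^*_{\leq 0}=\Psi^*_{\leq 0}\circ q^*$, with $\Psi^*_{\leq 0}$ fully faithful exactly as you argue; note that $\Psi^*$ is left exact, so it preserves $0$-truncated objects.
\item For condensed sets $A,B$ one computes $\Hom_{/\CondShape{X}}(A\times\CondShape{X},B\times\CondShape{X})\simeq\Hom(A\times\CondShape{X},B)\simeq\Hom(A\times\pizerocond(X),B)$. This uses that $B$ is $0$-truncated and that $\pi_0$ preserves finite products.
\item Hence $q^*$ is fully faithful if and only if $\Hom(A,B)\to\Hom(A\times T,B)$ is bijective for all $A,B$, which is precisely your criterion for $p^*$.
\end{itemize}

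Equivalently, $\CondSet_{/T}$ embeds fully faithfully into $(\CondAni_{/\CondShape{X}})_{\leq 0}$ via pullback along $\CondShape{X}\to T$, since $\pi_0(E\times_T\CondShape{X})\simeq E$. So your factorization is correct once ``equivalence'' is weakened to ``fully faithful embedding''.

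Your final step is then fine as written: surjectivity of $\Hom(\ast,T)\to\Hom(T,T)$ makes $\mathrm{id}_T$ factor through $\ast$, so $T\simeq\ast$. This direct Hom-set argument for (ii)$\Rightarrow$(i) differs from the route implicit in the paper, which goes through the relative-shape description. There, $f^*_{\leq 0}$ has left adjoint $\pi_0\circ f_!$ sending $\ast$ to $\pizerocond(X)$, and full faithfulness makes the counit invertible. Your argument has the advantage of not needing the existence of $f_!$.
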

\subsection{(Non-)examples of condensed contractible schemes}
Let us give (non-)examples of condensed connected and condensed contractible schemes.
\begin{notation}
For a field $k$ with a choice of a separable closure $k^{\mathrm{sep}}$, we denote by $\Gal_k$ the absolute Galois group of $k$ with respect to $k^{\mathrm{sep}}$. 
\end{notation}
\begin{recollection}
Every group $G\in \Grp$ can be regarded as a (connected) groupoid via its \textit{delooping} $\Bup G$ which is the groupoid with a single object $\ast$ and morphisms $\Hom_{\Bup G}(\ast,\ast)=G$. For a condensed group $G\in \CondGrp$, there exists likewise a \textit{condensed delooping} or \textit{condensed classifying anima} $\Bup G\in \CondAni$. It is given by taking level-wise the deloopings $\Bup G(S)$ of the groups $G(S)$ for all $S\in \Extr$. This defines a (1-)truncated condensed anima as delooping is compatible with products of groups, compare \cref{rec:adjointcondensed}.
\end{recollection}
\begin{example}{\cite[Example 3.41]{The_condensed_homotopy_type}}\label[example]{ex:condshape_field}
For every field $k$ and a choice of a separable closure $k^{\mathrm{sep}}$, we have $$\CondShape{\Spec(k)}=\BGal_k\in \CondAni$$ where $\Bup\Gal_k$ is the condensed delooping of the profinite absolute Galois group $\Gal_k$ of $k$ viewed as condensed group, compare \cref{ex:top}.
This object is always connected. It is contractible if and only if $k=k^{\mathrm{sep}}$.
\end{example}
\begin{lemma}{\cites[Corollary 3.42]{The_condensed_homotopy_type}[Lemma 2.4.5]{CatrinsThesis}}
Let $X$ be a scheme of dimension $0$. 
Then the condensed homotopy type is contractible $$\CondShape{X}=\ast$$ if and only if the reduced scheme $X_{\mathrm{red}}$ is $\Spec(k)$ for $k$ a separably closed field.
\end{lemma}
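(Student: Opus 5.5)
The plan is to reduce to the reduced scheme, then to the underlying profinite space of points together with the residue fields, and finally to the field case of \cref{ex:condshape_field}. First, the étale topos is insensitive to nilpotents, since $X_{\mathrm{red}}\to X$ is a universal homeomorphism. Hence $\Gal(X)\simeq\Gal(X_{\mathrm{red}})$ and $\CondShape{X}\simeq\CondShape{X_{\mathrm{red}}}$, so we may assume $X$ is reduced. A reduced qcqs scheme of dimension $0$ is affine, $X=\Spec(A)$ with $A$ absolutely flat. Its underlying space $|X|$ is a profinite set, every point is closed, and every local ring $\mathcal{O}_{X,x}=\kappa(x)$ is a field.

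Next, describe $\Gal(X)$ explicitly. Since there are no nontrivial specializations, \cref{rem:Pointsetaletopos2} shows that $\Pt(X_{\et})$ is a groupoid: a disjoint union over $x\in|X|$ of $\Bup\Gal_{\kappa(x)}$. More generally, $\Gal(X)(S)$ is a groupoid for every $S\in\Extr$. To see this, take a morphism in $\Gal(X)(S)$; by \cref{prop:Gal_wcontrrings} it is a map of $S$-connected weakly contractible schemes over $X$. Every such map is an isomorphism, because the connected components are spectra of strictly henselian local rings over points of $X$, hence separably closed fields in dimension $0$, and the maps induce the identity on $\pi_0$. Consequently $\Bcond\Gal(X)$ is the $1$-truncated condensed anima obtained by groupoid completion levelwise, and it sits in a fibration $\CondShape{X}\to\pi_0^{\cond}(X)\cong\underline{|X|}$. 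The map to $\underline{|X|}$ comes from the functor $s$ of \cref{ex:Gal(X)_is_a_stratified_anima}, condensed via $\pi_0$ of weakly contractible covers. Its fibers over points $x$ are $\Bup\Gal_{\kappa(x)}$, by base change to $\Spec(\kappa(x))$ and \cref{ex:condshape_field}.

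Given this, the ``if'' direction is immediate: if $X_{\mathrm{red}}=\Spec(k)$ with $k$ separably closed, then $\CondShape{X}=\Bup\Gal_k=\ast$ by \cref{ex:condshape_field}. For ``only if'', assume $\CondShape{X}=\ast$. Then $\pi_0^{\cond}(X)=\ast$, and evaluating at $\ast$ gives $\pi_0\Pt(X_{\et})=|X|$, so $|X|$ is a single point and $X_{\mathrm{red}}=\Spec(k)$ for a field $k$. Then $\Bup\Gal_k=\ast$, so $\Gal_k=1$ and $k$ is separably closed.

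The main obstacle is the identification $\pi_0^{\cond}(X)\cong\underline{|X|}$, or at least its consequence on underlying sets: that $\pi_0(\Pt(X_{\et}))=|X|$. The ``only if'' direction needs only this weaker statement, which follows directly from \cref{rem:Pointsetaletopos2}, since objects in distinct points have no morphisms between them. So the converse really uses just evaluation at $\ast$ and the contractibility of $\Bup\Pt(X_{\et})$. The step I would check most carefully is the reduction $\CondShape{X}\simeq\CondShape{X_{\mathrm{red}}}$. It follows from topological invariance of the étale site, which gives an equivalence of étale topoi preserving constructible sheaves, hence an equivalence $\Gal(X)\simeq\Gal(X_{\mathrm{red}})$ directly from \cref{def:Gal}.
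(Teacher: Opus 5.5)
Your argument is correct. The paper gives no proof of this lemma; it only cites \cite[Corollary 3.42]{The_condensed_homotopy_type} and \cite[Lemma 2.4.5]{CatrinsThesis}, so the useful comparison is with the route its own results suggest.

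\textbf{The in-paper route.} A zero-dimensional qcqs scheme is affine and $w$-local. By \cref{ex:wlocalring}, $\pizerocond(X)=\pi_0(X)=|X|$, and the fibre over $x$ is $\CondShape{\Spec(\mathcal{O}_{X,x})}$. Since $\mathcal{O}_{X,x}$ is a zero-dimensional, hence henselian, local ring, this fibre is $\Bup\Gal_{\kappa(x)}$ by \cref{ex:henselianring} or \cref{ex:condshape_field}. This is exactly the fibration in your second paragraph. Your own justification there (``condensed via $\pi_0$ of weakly contractible covers'', ``by base change'') is too thin to stand alone; if you want it, cite \cref{ex:wlocalring}.

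\textbf{What your argument actually uses.} As you note, you never need that fibration, and your load-bearing steps are more elementary.
\begin{itemize}
\item For ``only if'', you need only $\CondShape{X}(\ast)=\Bup\Pt(X_{\et})$ together with \cref{rem:Pointsetaletopos2}. This even works without first passing to $X_{\mathrm{red}}$, since all points of a zero-dimensional scheme are closed.
\item For ``if'', you need only topological invariance and the field case.
\item If you want to avoid topological invariance in ``if'': a zero-dimensional local ring has nil maximal ideal, so it is henselian. It is therefore strictly henselian once its residue field is separably closed, and \cref{cor:stricthensel_initial} gives an initial object.
\end{itemize}

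\textbf{A small correction.} In the dispensable claim that $\Gal(X)(S)$ is a groupoid for every $S$, it is not enough that the connected components of $Y$ and $Z$ are spectra of separably closed fields. A map of separably closed fields over $\kappa(x)$ need not be an isomorphism. You also need that the residue fields of the weakly étale map $Y\to X$ are separable algebraic over those of $X$ (Stacks Project, Tag 092R). Then both fields are separable closures of $\kappa(x)$, and the map between them is an isomorphism.
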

We extend the example of (separably closed) fields to (strictly henselian) local rings. As an upshot, we keep triviality on condensed connected components for local rings.
\begin{example}
Let $A$ be a local ring. Then $X=\Spec(A)$ is condensed connected: By \cref{prop:alter_Gal_prop}, for every $S\in \Extr$ the category
$\Gal(X)(S)$ admits a weakly initial object. Particularly, for all $S\in \Extr$ these categories are connected and thus their classifying anima are as well. 
\end{example}
Adding the henselian property in the previous example, all higher homotopy groups vanish. However, if the local rings are not strictly henselian, we have non-trivial automorphisms of points inducing homotopical information. 
\begin{example}\label[example]{ex:henselianring}
Let $X=\Spec(A)$ for $A$ a henselian local ring with residue field $\kappa$. By \cite[Corollary 3.48]{The_condensed_homotopy_type}, there is an equivalence
$$\CondShape{X}=\CondShape{\Spec(\kappa)}=\Bup\Gal_{\kappa}\period$$
The scheme $X$ is condensed contractible if and only if $A$ is strictly henselian.
\end{example}
In \cref{sec:weaklycontractible}, we introduced affine schemes which are highly determined by their local rings at closed points and their profinite sets of connected components. From our considerations on (strictly henselian) local rings, we can move on to \textit{w-(strictly) local} rings.
\begin{proposition}\label[proposition]{ex:wlocalring}
Let $X=\Spec(A)$ for $a$ a \textit{w}-local ring. Then it is $\pizerocond(X)=\pi_0(X)$ and the fiber of $\CondShape{X}\rightarrow \pi_0(X)$ at a point in $\pi_0(X)$ is the condensed homotopy type of the localization $X_{(x_0)}\colonequals\Spec(A_{(x_0)})$ at the closed point $x_0$ of the given connected component. 
\end{proposition}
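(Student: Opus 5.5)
We construct a map of condensed anima $\CondShape{X}\rightarrow \pi_0(X)$, with $\pi_0(X)$ regarded as a (static) condensed set via \cref{ex:top}, and show that it is an equivalence on $\pi_0$. Then we identify its fibers.

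\textbf{Step 1: the map to $\pi_0(X)$.} The quotient map $|X|\rightarrow \pi_0(X)$ onto the profinite set of connected components induces a geometric morphism from $X_{\et}$ to $\Sh(\pi_0(X))$. Its pullback sends constructible sheaves to constructible sheaves, since $\pi_0(X)$ is profinite and the map is quasi-compact. It therefore yields, for each $S\in\Extr$, a functor $\Gal(X)(S)\rightarrow \Gal(\pi_0(X))(S)$. For a profinite set $T$ one has $\Gal(T)(S)=\Hom_{\mathrm{cts}}(S,T)$, a discrete set, so we obtain $\Bcond\Gal(X)\rightarrow \pi_0(X)$. Concretely, in the description of \cref{prop:Galois_weaklycontractible}, an $S$-connected $w$-contractible $Y\rightarrow X$ is sent to $\pi_0(Y)=S\rightarrow \pi_0(X)$.

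\textbf{Step 2: the components over $S$.} Fix $S\in\Extr$ and a continuous map $c\colon S\rightarrow \pi_0(X)$. By \cref{rem:homeomorph_connectedcomp}, $c$ corresponds to a continuous family of closed points $x_0(s)\in X_{\mathrm{cl}}$. We show that the full subcategory $\Gal(X)(S)_c$ lying over $c$ is connected. For this we mimic \cref{prop:alter_Gal_prop}. Form the $w$-strictly local scheme obtained by pulling back, along $c$, the "family of strict henselizations at closed points," that is, the strict henselization of $X\times_{\pi_0(X)}S$ along its closed points. Using that $X$ is $w$-local, this is a $w$-contractible ring with $\pi_0=S$, by \cref{rec:wstrictly_wcontractible}, since $S$ is extremally disconnected. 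It maps into every object over $c$, by the argument of \cref{prop:alter_Gal_prop}: given $Y$ over $c$, its closed points map to closed points of $X$, so the structure map factors through $X\times_{\pi_0(X)}S$, and strict henselianity of the local rings of $Y$ at closed points lifts this. A weakly initial object gives connectedness. Hence $\pi_0\Bup\Gal(X)(S)=\Hom(S,\pi_0(X))$, so $\pizerocond(X)=\pi_0(X)$.

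\textbf{Step 3: the fibers.} The fiber of $\CondShape{X}\rightarrow\pi_0(X)$ over a point $\ast\rightarrow \pi_0(X)$, corresponding to a component with closed point $x_0$, is computed levelwise as $\Bup$ of the fiber category. Here we use that $\Bup$ commutes with pullback along a discrete set, since the base is a set and the category decomposes as a coproduct over $\Hom(S,\pi_0(X))$. That fiber category, for $S\in\Extr$, is the subcategory of $S$-connected $w$-contractibles whose structure map lands in the connected component $X_{x_0}$. By \cref{rem:localring_connectedcomp}, this component is $X_{(x_0)}=\Spec(A_{(x_0)})$. This is an open-closed-type identification: the component is pro-(open and closed) and equals the local scheme. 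Consequently the fiber category is $\Gal(X_{(x_0)})(S)$, again via \cref{prop:Galois_weaklycontractible}, since a map $Y\rightarrow X$ factors through the pro-open subscheme $X_{(x_0)}$ iff its image lies in that component. Taking $\Bcond$ gives the claim.

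\textbf{Main obstacle.} The delicate point is the factorization in Step 3. We must show that a $w$-contractible $Y\rightarrow X$ whose $\pi_0$ maps constantly to $[x_0]$ factors through $\Spec(A_{(x_0)})$, and that this is compatible with morphisms. I would argue this by writing the connected component $X_{x_0}$ as the cofiltered intersection of clopen neighbourhoods $\Spec(A_{e})$, $e$ idempotent, and passing to the limit on rings: $A_{(x_0)}=\colim_e A_e$. This step uses $w$-locality, which guarantees that the component equals the localization at its closed point.
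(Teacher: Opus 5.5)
Your route is genuinely different from the paper's, and it works once one false justification in Step 2 is replaced. The paper imports $\pizerocond(X)=\pi_0(X)$ from \cite[Proposition 4.29]{The_condensed_homotopy_type}. For the fiber, it uses that $\Bcond$ is locally cartesian to reduce to a pullback of condensed categories. It then uses that every term lies in the image of the fully faithful right adjoint $\Cat(\ProAnifin)\hookrightarrow\CondCat$, together with conservativity of taking underlying categories, to reduce to the single level $S=\ast$. There the classical description of $\Pt(X_{\et})$ by strictly henselian local rings finishes the argument.

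You instead work at every $S\in\Extr$. Since $\pi_0(X)$ is static and morphisms in $\Gal(X)(S)$ are the identity on $\pi_0$, the category $\Gal(X)(S)$ splits as a coproduct over $\Hom(S,\pi_0(X))$. Hence fibers commute with $\Bup$ levelwise without any locally cartesian input, and you identify the summands via \cref{prop:Galois_weaklycontractible}. This is more self-contained and proves the $\pi_0$ statement rather than citing it. The price is that you need the $w$-contractible description at all levels, compatibly in $S$; the cleanest way is to check that the functorially induced $\Gal(X_{(x_0)})\to\Gal(X)$ is levelwise an equivalence onto the summand. The paper needs only $S=\ast$. Your ``main obstacle'' is actually easy: a ring map $A\to B$ factors, uniquely, through $A_{(x_0)}$ as soon as every prime of $B$ pulls back into the maximal ideal at $x_0$.

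\textbf{The flaw in Step 2.} It is false that closed points of an object $Y$ map to closed points of $X$. Already for $S=\ast$, take $X=\Spec(\ZZ_{(p)})$ and $Y=\Spec(\overline{\QQ})$: this is an object of $\Gal(X)(\ast)$ whose closed point maps to the generic point. So the local universal property of strict henselization does not give your lift. The fix has two parts.
\begin{itemize}
\item The factorization of $Y\to X$ through $X\times_{\pi_0(X)}S$ still holds, by the adjunction $\pi_0\dashv X\times_{\pi_0(X)}(-)$ used in \cref{prop:alter_Gal_prop}.
\item The lift to your object $W$ should come from weak contractibility of $Y$. The map $W\to X\times_{\pi_0(X)}S$ is ind-\'etale and surjective: it hits every closed point, every point is a generization of the closed point of its component, and flat maps are generizing. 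So $Y\times_{X\times_{\pi_0(X)}S}W\to Y$ is a pro-\'etale cover and has a section. The resulting map $Y\to W$ is automatically the identity on $\pi_0$.
\end{itemize}

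\textbf{Constructing $W$.} To get $\pi_0(W)=S$ you need a continuous choice of separable closures of the residue fields at the closed points, and these residue fields vary with $s$. Such a choice exists because $S$ is extremally disconnected. Take any ind-\'etale cover of the absolutely flat closed subscheme $X_{\mathrm{cl}}\times_{\pi_0(X)}S$ with separably closed residue fields. Choose a continuous section of the induced surjection of $\pi_0$'s onto $S$, restrict to the corresponding pro-clopen subscheme, and then henselize along it. The result is not canonical, which is harmless since only existence is needed.
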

\begin{proof}
  As in a $w$-local ring every prime ideal lies in a unique maximal ideal, we obtain the identification on connected components by \cite[Proposition 4.29]{The_condensed_homotopy_type} and under notice of the isomorphisms of topological spaces $$\mathrm{M}\Spec(A)\simeq\Spec(A)_{\mathrm{cl}}\simeq \pi_0(\Spec(A))\comma $$ where $\mathrm{M}\Spec(A)\subset \Spec(A)$ is the subspace of maximal ideals. The given map $$\CondShape{X} \rightarrow \pizerocond(X)=\pi_0(X)$$ is the unit of the adjunction between $\CondSet$ and $\CondAni$.
As every connected component in $\pi_0(X)$ corresponds to a unique closed point $x_0\in \Spec(A)_{\mathrm{cl}}$, we are aiming to show that the square 
    \[
  \begin{tikzcd}
   \CondShape{X_{(x_0)}}  \arrow[swap]{r} \arrow{d} & \CondShape{X} \arrow{d} \\
     \ast \arrow["x_0"]{r} & \pi_0(X)
  \end{tikzcd}
    \]
is a pullback square in $\CondAni$, where $\pi_0(X)\in \ProFin\subset \CondSet \subset \CondAni$ and $x_0\colon \ast \rightarrow \pi_0(X)$ maps to the connected component of $x_0$. As the localization functor $\Bcond\colon \CondCat \rightarrow \CondAni$ is locally cartesian, see \cite[Example 5.5]{The_condensed_homotopy_type}, we are reduced to showing that 
 \[
  \begin{tikzcd}
   \Gal(X_{(x_0)}) \arrow[swap]{r} \arrow{d} & \Gal(X)\arrow{d} \\
     \ast \arrow["x_0"]{r} & \pi_0(X)
  \end{tikzcd}
    \]
is a pullback square in $\CondCat$, where we view $\pi_0(X)\in \CondCat$ via the fully faithful embedding $\CondAni\hookrightarrow \CondCat$. We note that all condensed categories in the square lie in the image of the functor extending the embedding $\ProAnifin\hookrightarrow \Catinfty$ to internal categories
$$\iota \colon \Cat(\ProAnifin)\hookrightarrow \CondCat\comma$$
which is a fully faithful right adjoint, see \cite[Observation 2.21]{The_condensed_homotopy_type}. For the Galois category of a scheme, this is stated in \cite[Recollection 3.32]{The_condensed_homotopy_type} and for $\pi_0(X)$ it follows by the composed embedding $\ProFin \hookrightarrow \ProAnifin\hookrightarrow \Cat(\ProAnifin)$. Thus, by conservativity and right adjointness of the composed functor
$$\Cat(\ProAnifin) \xhookrightarrow{\iota} \CondCat \xrightarrow{X\mapsto X(\ast)} \Catinfty \comma$$
compare \cite[Observation 6.5]{The_condensed_homotopy_type}, we are further reduced to showing that the square
 \[
  \begin{tikzcd}
   \Pt({X_{(x_0)}}_{\et}) \arrow[swap]{r} \arrow{d} & \Pt(X_{\et})\arrow{d} \\
     \ast \arrow["x_0"]{r} & \pi_0(X)
  \end{tikzcd}
    \]
on underlying categories is a pullback square. By the description of $\Pt(X_{\et})$ as the category of strictly henselian local rings lying above $X$, \cref{rem:points_strictlyhenselian}, the pullback describes exactly those strictly henselian local rings $B$ such that the structure map $\Spec(B) \rightarrow X$ factors over the connected component of the closed point $x_0$. As this connected component is identified with the subscheme $X_{(x_0)}\subset X$, see \cref{rem:localring_connectedcomp}, the pullback is indeed $ \Pt({X_{(x_0)}}_{\et})$ and we are done.
\end{proof}
\begin{remark}
Let $A$ be a $w$-local ring with all local rings at closed points $x_0\in \Spec(A)$ henselian.
Combining \cref{ex:henselianring} and \cref{ex:wlocalring}, all the fibers from the previous proposition are $1$-truncated as they coincide with $\Bup\Gal_{\kappa(x_0)}\in \CondAni$, where $\kappa(x_0)$ is the residue field of the point $x_0$. They are $0$-truncated if and only if the local rings are even strictly henselian. This, in particular, implies that the whole condensed homotopy type is $1$-truncated. respectively, $0$-truncated as all (higher) condensed homotopy groups at points $\ast \rightarrow \CondShape{\Spec(A)}$ then vanish.
\end{remark}
In the strictly henselian case, the final point of the last remark was already stated in \cite{The_condensed_homotopy_type}:
\begin{example}\label[example]{ex:strictly_henselian_trivial}
Let $X$ be the spectrum of a $w$-strictly local ring.
By \cite[Proposition 3.43]{The_condensed_homotopy_type}, the condensed homotopy type is $0$-truncated with $\CondShape{X}=\pi_0(X)$.
Then $X$ is condensed contractible if and only if $X$ is connected, i.e., $X$
is the spectrum of a strictly henselian local ring.
\end{example}
Later in \cref{sec:Trivial_Shapes}, we will see that $A$ being strictly henselian is equivalent to $\Gal(\Spec(A))$ having an initial object. We will show a corresponding result on $\Gal(X)$ having a terminal object for $X$ being irreducible and everywhere strictly local. Here, the irreducibility ensures the scheme to be condensed connected.
\begin{example}
Every irreducible qcqs scheme $X$ is condensed connected. This follows as by \cite[Corollary 4.19]{The_condensed_homotopy_type} or \cite[Proposition 2.2.25]{CatrinsThesis}, the condensed set of connected components $\pizerocond(X)$ coincides in this case with the usual profinite set of connected components $\pi_0(X)$ and irreducibility of a topological space induces connectedness.
\end{example}
\begin{recollection}
We say that a scheme $X$ is \textit{\'etale contractible} if its \textit{protruncated \'etale homotopy type} $\Pietprotrun(X)\in \ProAnitrun$, defined as the \textit{protruncated shape} of the (hypercomplete) \'etale $\infty$-topos\footnote{For a definition of (protruncated) shapes see, for example, \cite[Recollection 2.27]{The_condensed_homotopy_type}.}, is a contractible pro-anima. Equivalently, this is the case if $X$ is connected and all its \'etale homotopy pro-groups $\pi_n^{\et}(X)$ are contractible, see \cites[Remark 4.1.3]{Exodromy}[Corollary 4.4]{MR245577}.
\end{recollection}
\begin{remark}
Every condensed contractible scheme also is \'etale contractible.
This follows by \cref{rec:recover_etale_homotopy}: The prodiscretization functor $(-)\prodisccompl\colon\CondAni \rightarrow \ProAnitrun$ preserves the terminal object as it is left adjoint to the pro-extension of the inclusion $\Ani_{<\infty}\hookrightarrow \CondAni$.
\end{remark}
\begin{example}
In \cite[Theorem 1]{MR3248993}, Holschbach-Schmidt-Stix prove that there does not exist a non-trivial smooth variety in positive characteristic which is \'etale contractible.
By the previous remark, this observation likewise translates to condensed contractibility.
\end{example}
Studying the previous examples, one might wonder whether the condensed homotopy type adds anything new to the picture of homotopy theory for schemes. The next example on a scheme that is \'etale but not condensed contractible might give an answer. 
\begin{example}\label[example]{ex:affineline_not_condensedcontractible}
In \cite[Section 7.1]{The_condensed_homotopy_type}, it is proven that the condensed fundamental group of the affine line over the complex numbers $\pionecond(\AA^1_{\CC}, \etabar)$ is non-trivial. Hence, $\AA^1_{\CC}$ is not condensed contractible although it is \'etale contractible: By the profiniteness theorem \cite[Theorem 11.1]{MR245577}, the \'etale homotopy type of $\AA^1_{\CC}$ is profinite and coincides by the generalized Riemann existence theorem \cite[Theorem 12.9, Corollary 12.10]{MR245577} with the profinite completion of the classical homotopy type $\Shape(\CC)$ of the complex numbers which is contractible.
\end{example}
The following section deals with an extension of this example and especially a generalization of the computation of $\pionecond(\AA^1_{\CC}, \etabar)$ given in \cite[Section 7.1]{The_condensed_homotopy_type} to general Dedekind domains.
\subsection{Condensed fundamental groups of Dedekind domains}
In the following, we give a formula for the computation of the underlying group of the condensed fundamental group of a general Dedekind domain. Following an argument by Jakob Stix, we show that for $\Spec(\ZZ)$ the resulting quotient and its abelianization are non-trivial. In other words, we show that $\Spec(\ZZ)$ provides another example of an \'etale contractible scheme, compare \cite[Example 11.2.2]{Exodromy}, that is not condensed contractible.
\\
\newline
We recollect notation from the theory of valued and local fields. Consult, e.g., \cite{valuedfields} and \cite{Localfields} for more details.
\begin{recollection}
Let $A$ be a Dedekind domain with fraction field $K=\Frac(A)$. 
\begin{enumerate}
     \item We fix a separable closure $K^{\mathrm{sep}}$ and an algebraic closure $\overline{K}$ of the field $K$ together with embeddings $K\subset K^{\mathrm{sep}}\subset \overline{K}.$ We denote by $\Gal_K\colonequals \Gal(K^{\mathrm{sep}},K)$ the absolute Galois group of $K$ with respect to $ K^{\mathrm{sep}}$.
    \item For $p$ a non-zero prime ideal of $A$, we denote by $v_p$ the \textit{$p$-adic valuation} of $K$. The valuation ring $\mathcal{O}_{v_p}$ of the valued field $(K,v_p)$ is the localization of $A$ at $p$, denoted by $A_p$.
    We also write $(K,\mathcal{O}_{v_p})$ or simply $(K,A_p)$ for the valued field. The residue field at the prime $p$ is denoted by $\kappa(p)\colonequals A_p/pA_p$ and its Galois group with respect to a fixed separable closure $\kappa(p)^{\mathrm{sep}}$ by $\Gal_{\kappa(p)}\colonequals \Gal(\kappa(p)^{\mathrm{sep}}, \kappa(p)).$
    \item For every non-zero prime ideal $p\subset A$, we fix a \textit{prolongation} (or extension) 
    $w_p$ of the $p$-adic valuation $v_p$ in the separable closure $K^{\mathrm{sep}}$ with corresponding valuation ring $\mathcal{O}_{w_p}$, i.e., the valuation rings satisfy $\mathcal{O}_{w_p}\cap K=A_p$. In other words, we fix a prime ideal $\beta$ above $p$ in the integral closure ${O}_{K^{\mathrm{sep}}}$ of $A$ in $K^{\mathrm{sep}}$, i.e., it is $\beta \cap A = p$ or, equivalently, $\beta$ contains the ideal $p{O}_{K^{\mathrm{sep}}}$ generated by $p$. The prolongation $w_p$ and the prime ideal $\beta$ correspond to each other in the sense of $v_{\beta}=w_p$ and $\beta$ is the maximal ideal of the local ring $\mathcal{O}_{w_p}$.  We denote the \textit{decomposition group} of $w_p$ (or $\beta$) by 
    $$D_p\colonequals \{\sigma \in \Gal_K| \sigma(\mathcal{O}_{w_p})=\mathcal{O}_{w_p}\}= \{\sigma \in \Gal_K| \sigma(\beta)=\beta\}\subset \Gal_K$$ and its \textit{inertia group} by 
    \begin{align*}
    I_p&\colonequals \{\sigma \in \Gal_K| \forall x \in \mathcal{O}_{w_p}:  \sigma(x)\equiv x\mod \beta\}\\
    &\simeq \ker(D_p \rightarrow \Gal_{\kappa(p)}) \subset D_p\comma    
    \end{align*}
    where the surjective morphism $D_p \rightarrow \Gal_{\kappa(p)}$ is defined by passage to the quotient. 
\end{enumerate}
\end{recollection}
\begin{remark}
    Note that even though we make choices by fixing a separable and an algebraic closure as well as an extension of the given valuation (ring), our subsequent results are not affected in a serious way by these choices: The groups $\Gal_{\kappa(p)}, \Gal_K, D_p$ and $I_p$ depending on these choices are conjugate to the corresponding groups for a different choice, compare, e.g., \cite[Lemma 5.2.1]{valuedfields} for the according statement on decomposition groups.
\end{remark}
\begin{recollection}\label[recollection]{rec:notation_dedekind}
    Let $X=\Spec(A)$ be the associated affine scheme of a Dedekind domain $A$.
    As $X$ is irreducible by $A$ being an integral domain, it is condensed connected, i.e., the condensed fundamental groups with respect to different basepoints are isomorphic.
    For the generic point $\eta \in X$ (corresponding to the zero ideal of $A$), we fix a geometric point $\etabar \rightarrow X$, i.e., a morphism $\Spec(K^{\mathrm{sep}})\rightarrow \Spec(A)$ corresponding to the embedding $K\hookrightarrow K^{\mathrm{sep}}$ and mapping to $\eta\in\Spec(A)$.
\end{recollection}
\begin{theorem}\label[theorem]{prop:Dedekind_domain}
Let $X=\Spec(A)$ be the affine scheme of a Dedekind domain $A$ with fraction field $K=\Frac(A)$ and generic point $\etabar \rightarrow X$. For every non-zero prime ideal $p$, we have an (up to conjugation unique) inertia group $I_p\subset \Gal_K$, chosen as above. Let $N$ be the (abstract) normal closure of the subgroup of $\Gal_K$ generated by the images of the inertia groups $I_p$ at all non-empty primes $p$.
Then the underlying group of the condensed fundamental group of $X$ is the quotient of $\Gal_K$ by $N$, i.e., 
$$\pionecond(X,\etabar)(\ast)=\Gal_K/N\period$$
\end{theorem}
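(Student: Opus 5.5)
The plan is to compute the underlying group by evaluating at the point. Since $\Bcond$ is computed levelwise and $\pi_1^{\cond}$ is defined levelwise, we have
$$\pionecond(X,\etabar)(\ast)=\pi_1(\Bup\Gal(X)(\ast),\etabar)=\pi_1(\Bup\Pt(X_{\et}),\etabar),$$
using the identification $\Gal(X)(\ast)\simeq\Pt(X_{\et})$. The question therefore reduces to computing the fundamental group of the classifying anima of an ordinary $1$-category. For a connected $1$-category, $\pi_1$ of its classifying anima is the automorphism group of an object in the groupoid obtained by formally inverting all morphisms. So I would write down an explicit small model of $\Pt(X_{\et})$ and compute its groupoidification.

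First I would make the category explicit using \cref{rem:Pointsetaletopos} and \cref{rem:Pointsetaletopos2}. Up to isomorphism the objects are the generic geometric point $\etabar$ and one geometric point $\bar p$ for each non-zero prime $p$. Their automorphism groups are $\Gal_K$ and $\Gal_{\kappa(p)}$ respectively. Since $\dim A=1$, the only non-identity specializations are $p\leq \eta$, so the only morphisms between distinct objects go from $\bar p$ to $\etabar$. These morphisms are the lifts $\Spec(K^{\mathrm{sep}})\to\Spec(A_p^{\mathrm{sh}})$ over $X$. The fraction field of $A_p^{\mathrm{sh}}$ is the maximal unramified extension of $K$ at $w_p$ inside a separable closure, so the set of lifts is identified with $\Gal_K/I_p$. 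Under this identification, $\Gal_K$ acts by post-composition on the left, and $\Aut(\bar p)=\Gal_{\kappa(p)}\cong D_p/I_p$ acts on the right. The composites are compatible with these actions, and the category has no further non-trivial compositions because the poset has length one.

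Next I would compute the groupoidification as a pushout, i.e.\ as the fundamental group of a graph of groups. The category is the Grothendieck construction of a diagram indexed by the poset $\Zpos{X}$, a cone with apex $\eta$ and one leg per prime. Its classifying anima is the colimit of the $\Bup$ of the pieces. Concretely, $\Bup$ of the piece over the edge $p\leq\eta$ is the nerve of the category with objects $\bar p,\etabar$, morphism set $\Gal_K/I_p$, and automorphism groups $D_p/I_p$ and $\Gal_K$. Its groupoidification is $\Bup(\Gal_K)$, with $\Gal_{\kappa(p)}$ mapping in via $D_p/I_p\to$? This map is only well defined after killing $I_p$.

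More precisely, choose the basepoint morphism $\phi_p\colon\bar p\to\etabar$ corresponding to the coset $I_p$. Inverting $\phi_p$ identifies $\bar p$ with $\etabar$, and every other morphism $\sigma\phi_p$ becomes $\sigma\circ\phi_p$. The relation $\sigma\phi_p=\phi_p$ for $\sigma\in I_p$ then forces $I_p$ to become trivial, while for $d\in D_p$ we get $\phi_p\circ\bar d=d\circ\phi_p$. By van Kampen for classifying spaces of categories (or directly from a presentation of the groupoidification by generators and relations), $\pi_1$ is therefore the amalgam over all $p$ of $\Gal_K$ with the groups $\Gal_{\kappa(p)}$. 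The identification is along $D_p$, with the relation that $I_p$ is killed. Because $D_p/I_p\to\Gal_{\kappa(p)}$ is an isomorphism, each $\Gal_{\kappa(p)}$ is absorbed, and the result is $\Gal_K$ modulo the normal closure $N$ of the $I_p$. Note that this uses the abstract normal closure, with no topological closure taken. Changing the choices of $w_p$ only conjugates $I_p$, so $N$ does not depend on them.

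The main obstacle is the careful identification of the hom-sets $\Hom(\bar p,\etabar)\cong\Gal_K/I_p$, compatibly with both actions. This requires knowing that the generic fibre of $\Spec(A_p^{\mathrm{sh}})$ is $\Spec$ of the maximal unramified extension of $K_p^h$, with Galois group $D_p/I_p$ relative to the decomposition field. I would take this from standard valuation theory (inertia fields and henselization). The second point that needs care is justifying the generators-and-relations computation of $\pi_1\Bup$ for an infinite diagram. I would handle this by writing $\Bup\Pt(X_{\et})$ as a colimit over the cone poset, as in the stratified description of \cite[Definition 12.1.3]{Exodromy}, and applying van Kampen.
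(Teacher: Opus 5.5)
Your proposal is correct. It ends at the same group-theoretic step as the paper, namely the colimit over all $p$ of the spans $\Gal_{\kappa(p)}\cong D_p/I_p\leftarrow D_p\to\Gal_K$, but it gets there by a different and more elementary route.

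The paper never writes down hom-sets in $\Pt(X_{\et})$. Instead it proceeds as follows:
\begin{itemize}
\item it uses the exodromy equivalence between stratified spaces and d\'ecollages to write $\Bup\Gal(X)(\ast)$ as $\colim_{\subdiv(\Zpos{X})^{\op}}\Nup_{\Zpos{X}}(\Gal(X)(\ast))$;
\item it identifies the strata and the link $\{p<\eta\}$ topos-theoretically, via an oriented fibre product and then $\Spec(A_p^h)\times_X\Spec(K)=\Spec(\Frac(A_p^h))$;
\item it applies profinite shape and materialization to get $\Bup\Gal_{\kappa(p)}\leftarrow\Bup\Gal_{\Frac(A_p^h)}\to\Bup\Gal_K$;
\item only then does it take $\pi_1$, after lifting the diagram to pointed anima.
\end{itemize}

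You instead localize an explicit skeleton of the $1$-category. Your only arithmetic input is the biset $\Hom(\bar p,\etabar)\cong\Gal_K/I_p$, given by the $K$-embeddings of $\Frac(A_p^{\mathrm{sh}})$ (the maximal unramified extension of the henselization) into $K^{\mathrm{sep}}$, with $\Gal_{\kappa(p)}\cong D_p/I_p$ acting on the right. The two pictures agree: the stabiliser of the coset $I_p$ under $\Gal_K\times\Gal_{\kappa(p)}$ is $\{(\sigma,\bar\sigma):\sigma\in D_p\}\cong D_p$, which is exactly the paper's link.

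What each route buys:
\begin{itemize}
\item Yours needs only the SGA4 description of points and classical valuation theory. It also makes visible why the normal closure is abstract, since elements of the localization are finite words.
\item The paper's yields the whole underlying anima, not just its $\pi_1$, as a colimit of classifying anima of Galois groups. It also sits in a framework that extends to schemes with longer specialization chains.
\end{itemize}

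Two small corrections. First, the sentence claiming that the groupoidification of the single-edge piece is $\Bup(\Gal_K)$ is wrong and should be removed. It is $\Bup(\Gal_K/\langle\!\langle I_p\rangle\!\rangle)$, as your own next paragraph shows.

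Second, your van Kampen concern for an infinite diagram is unnecessary. For any small category $\mathcal{C}$, $\pi_1(\Bup\mathcal{C},c)$ is the automorphism group of $c$ in $\mathcal{C}[\mathcal{C}^{-1}]$, and this groupoid has a presentation by generators and relations. Contracting the tree $\{\phi_p\}$ then gives $\Gal_K/N$ directly. You need surjectivity of $D_p\to\Gal_{\kappa(p)}$ here, to eliminate the generators coming from $\Gal_{\kappa(p)}$.
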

\begin{proof}
First, we would like to point out that the proof uses various aspects on stratified spaces and $\infty$-topoi formulated in detail in \cite{Exodromy} as well as more classical results on local and global field theory, which we will not give a proper introduction into here as this would go beyond the scope of this article. Nevertheless, we made an effort to make the arguments accessible.
    We generalize the computation in the proof of \cite[Proposition 7.2]{The_condensed_homotopy_type} on non-triviality of the condensed fundamental group $\pionecond(\AA^1_{\CC}, \etabar)$.
    By \cref{rec:notation_dedekind}, the underlying anima $ (\Bcond\Gal(X))(\ast)=\BGal(X)(\ast)\in \Ani $ of the condensed classifying anima is connected.
    We can compute it using 
    the natural equivalence
    \begin{equation}\label{eq:classifying_anima_via_decollage}
        \BGal(X)(\ast) \equivalent \colim_{\subdiv(\Zpos{X})^{\op}} \Nup_{\Zpos{X}}(\Gal(X)(\pt))
    \end{equation}
given in \cite[Example 7.6]{The_condensed_homotopy_type}. This equivalence arises from the fact that the underlying category $\Gal(X)(\ast)$ is \textit{stratified} over the Zariski poset $\Zpos{X}$, i.e., it comes with a conservative functor $\Gal(X)(\ast) \rightarrow \Zpos{X}$. This map was already described in \cref{ex:Gal(X)_is_a_stratified_anima}. An $\infty$-category $\Ccal$  with a conservative functor $\Ccal \rightarrow P$ to a poset $P$ is called a \textit{$P$-stratified space}. The stratification decomposes the space in \textit{strata} corresponding to points $x_0\in P$ and a \textit{link} for each pair of points with $x_0 \leq y_0$.
The \textit{nerve} $\Nup_P(\Ccal)$ of a $P$ -stratified space $\Ccal \rightarrow P$ keeps track on the strata and the linking information. The assignment $\Ccal \mapsto \Nup_P(\Ccal)$ defines a functor from $P$-stratified spaces to \textit{spatial d\'ecollages} over $P$, which is an equivalence of categories \cite[Theorem 2.7.4]{Exodromy}.
The spatial d\'ecollage $\Nup_P(\Ccal)$ is given as the functor \begin{equation*}
       \Nup_{P}(\Ccal) \colon \subdiv(P)^{\op} \to \Ani, \ \Sigma \mapsto \Hom_{P}(\Sigma,\Ccal),
    \end{equation*}
see \cite[Construction 2.71]{Exodromy}.
Here, the subdivision poset $\subdiv(P)$ consists of nonempty linearly ordered finite subsets of $P$ and is ordered by containment, see \cite[Notation 1.1.8]{Exodromy}. For the $\infty$-category $\Ccal$, the classifying anima $B\Ccal$,  obtained by inverting everything, can be computed as the colimit $\colim_{\Ccal} 1_{\Ani}$ of the constant diagram $\Ccal \rightarrow \Ani$ at the terminal object, compare \cite[Notation 2.2.1]{exodromy}. Under the equivalence of $P$-stratified spaces and spatial d\'ecollages over $P$, this colimit translates to 
taking the colimit over the nerve $\Nup_P(\Ccal)$ indexed by the subdivision poset $\subdiv(P)$. This was made formal in \cite[Proposition 7.5]{The_condensed_homotopy_type} providing us with (\ref{eq:classifying_anima_via_decollage}).
Coming back to our concrete application case, by combining \cite[Construction 8.71, Example 8.72 and Theorem 10.4.3]{Exodromy}, the nerve $\Nup_{\Zpos{X}}(\Gal(X)(\pt))$
can be described as follows: First take the nerve  $$ \Nup_{\Zpos{X}}(X_{\et})\colon\subdiv(\Zpos{X})^{\op} \to \RTop_{\infty} $$ of the stratified $\infty$-topos $X_{\et}\rightarrow X_{\zar}$, then take the profinite shape $\Pietprofin\colon \RTop_{\infty} \rightarrow \ProAnifin$ and finally apply the materialization functor on profinite anima $ \lim \colon \ProAnifin \to \Ani $.
As the Zariski poset $\Zpos{X}$ consists of the generic point $\eta$ and points $p$ for all finite primes in $X$ and it is $p<\eta$, as $p$ lies in the closure of $\eta$, the elements of the subdivision poset $\subdiv(\Zpos{X})$ are of the form
    \begin{equation*}
        \{p\} \comma \quad \{\eta\} \comma \andeq \{p  < \eta \} \period
    \end{equation*}
The ordering is given by $ \{p\} < \{p  < \eta \}$ and $\{ \eta \}< \{p  < \eta \}$. The nerve functor on stratified $\infty$-topoi $\Nup_{\Zpos{X}}(X_{\et})\colon\subdiv(\Zpos{X})^{\op} \to \RTop_{\infty} $ sends $\{p\}$ and $\{\eta\}$ to the topos-theoretic strata $(X_{\et})_p$, respectively $(X_{\et})_{\eta}$, and $\{p < \eta\}$ to the topos-theoretic link given by the oriented fiber product $(X_{\et})_p \orientedtimes_{X_{\et}}(X_{\et})_{\eta}$, see \cite[Construction 8.7.1]{Exodromy}.
By \cite[Example 9.2.4]{Exodromy}, the $p$-strata are given by the \'etale $\infty$-topoi of the residue fields $\kappa(p)$ and the $\eta$-stratum is given by the \'etale topos of the residue field $\kappa(\eta)=K$.
Further using \cite[Example 5.4.10]{Exodromy}, the link $(X_{\et})_p \orientedtimes_{X_{\et}} (X_{\et})_{\eta}$ can be computed as $$((X_{\et})_p \orientedtimes_{X_{\et}} X_{\et}) \times_{X_{\et}} (X_{\et})_{\eta}\comma$$ where we take the fiber product in $\infty$-topoi.
The oriented fiber product $((X_{\et})_p \orientedtimes_{X_{\et}} X_{\et})$ is by \cite[Example 6.7.4]{Exodromy} the \'etale $\infty$-topos corresponding to the following ring: take the hensilization $A_p^h$ of the local ring $A_p$ at the prime $p$ and then the unramified extension which has the same residue field $\kappa(p)$. By the condition on the residue field, this has to be the ring $A_p^h$ itself.
Thus we are reduced to compute the fiber product 
$$\Spec(A_p^h)_{\et}\times_{X_{\et}} \Spec(K)_{\et}$$
of \'etale $\infty$-topoi. By \cite[Lemma 2.9]{reconstruction_etale_topoi}, this is the same as the \'etale $\infty$-topos of the fiber product on the level of schemes, i.e., of $\Spec(A_p^h)\times_{X}\Spec(K)$, which again, by \cite[\href{https://stacks.math.columbia.edu/tag/01I4}{Lemma 01I4}]{stacksproject} corresponds to the tensor product of rings $A_p^h \otimes_{A}K=\Frac(A_p^h)$.
Altogether, any diagram $\{p\} < \{p < \eta\} < \{\eta\}$ in the subdivision poset is sent to the span of schemes
\begin{equation}\label{eq:DVR_decollage1}
        \begin{tikzcd}[sep=3em]
        	\Spec(\kappa(p)) & \Spec(\Frac(A_p^h)) \arrow[r] \arrow[l] & {\Spec(K)}
        \end{tikzcd}
    \end{equation} 
before applying the profinite shape (of the corresponding \'etale $\infty$-topoi) and the materialization itemwise.
The field $\Frac(A_p^h)$ is an algebraic field extension of $K$ as the ring $A_p^h$ is a filtered colimit of finite \'etale $A_p$-algebras each of which gives a finite separable field extension on fraction fields.
Thus we can choose a lift $\etabar_p $ of the geometric point $\etabar$ lying above the generic point $\eta \in \Spec(A)$, which fits into the commutative triangle
    \begin{equation*}
        \begin{tikzcd}
        	 & \Spec(\Frac(A_p^h))\arrow[d]  \\
        	\Spec(K^{\mathrm{sep}}) \arrow[r, "\etabar"'] \arrow[ur, dotted, "{\etabar_p}"] & \Spec(K) \period
        \end{tikzcd}
    \end{equation*}
The lift also induces a geometric point $\etabar_p'\colon \Spec(K^{\mathrm{sep}}) \rightarrow \Spec(\kappa(p))$.
In particular, we can lift the span \eqref{eq:DVR_decollage1} to a span of pointed schemes. Therefore, $\Nup_{\Zpos{X}}(\Gal(X)(\ast))$ also lifts to a diagram of pointed anima $ \Nup_{\Zpos{X}}(\Gal(X)(\ast))_*\colon\subdiv(\Zpos{X})^{\op} \to \Ani_*$.
    Using that $\uppi_1$ is an equivalence between pointed, connected, $1$-truncated anima and the category of groups \HTT{Proposition}{7.2.12}, we may thus compute
    \begin{equation*}
        \uppi_{1}(\Bup\Gal(X)(\ast),\etabar) \simeq \colim_{\subdiv(\Zpos{X})^{\op}} \uppi_{1}(\Nup_{\Zpos{X}}(\Gal(X)(\ast))_*) \period
    \end{equation*}
    
    Now for any $\{p\} < \{p  < \eta \} > \{ \eta \}$, the corresponding span in groups is given by
    \begin{equation*}
        \begin{tikzcd}[sep=3em]
        	\pioneet(\Spec(\kappa(p)),\etabar_p') & \pioneet(\Spec(\Frac(A_p^h),\etabar_p) \arrow[r] \arrow[l] & \pioneet(\Spec(K,\etabar) \period
        \end{tikzcd}
    \end{equation*}
    Let $K_p$ be the completion of $K$ with respect to the $p$-adic valuation $v_p$. The absolute Galois group of this field is the decomposition group $\Gal_{K_p}=D_p$ \cite[Chapter II, §3, Corollary 4]{Localfields} and hence a closed subgroup of $\Gal_K$ \cite[Lemma 5.2.1]{valuedfields}. As such, by the infinite version of the fundamental theorem of Galois theory \cite[Theorem 5.1.2]{valuedfields}, it equivalently is the Galois group of its fixed field over $K$, which is the henselization $(K^h,A_p\cap K^h)$ of the valued field $(K,A_p)$ \cite[Theorem 5.2.2 and before]{valuedfields}. Note that it identifies $K^h=\Frac(A_p^h)$ by the universal property of henselization, cf. \cite[Theorem 5.2.2 (2)]{valuedfields}. Thus we can identify $\Gal_{\Frac(A_p^h)}=\Gal_{K_p}$.
    Translating the \'etale fundamental groups from above to absolute Galois groups, we obtain
     \begin{equation*}
        \begin{tikzcd}[sep=3em]
        	\Gal_{\kappa(p)} & \Gal_{K_p} \arrow[r] \arrow[l] & \Gal_{K} \period
        \end{tikzcd}
    \end{equation*}
Further, $\Gal_{\kappa(p)}$ is isomorphic to the quotient of the decomposition group $D_p=\Gal_{K_p}$ by its normal inertia subgroup $I_p$ \cite[Chapter I, §7, Proposition 20]{Localfields}.
Thus the span translates to
 \begin{equation*}
        \begin{tikzcd}[sep=3em]
        	D_p/I_p & D_p\arrow[r] \arrow[l] & \Gal_{K} \period
        \end{tikzcd}
    \end{equation*}
Then the colimit of the diagram $ \uppi_{1}(\Nup_{\Zpos{X}}(\Gal(X)(\ast))_*) $ over $\subdiv(\Zpos{X})^{\op}$ computes as the quotient of $\Gal_{K}$ by the (abstract) normal closure of the subgroup generated by the images of all the inertia groups $I_p$. 
We refer to this normal closure as $N$.
Indeed, as the kernels of the maps $\Gal_{K_p}\rightarrow \Gal_{\kappa(p)}$ are exactly the inertia groups $I_p$, the kernel of the map from $\Gal_{K}$ to the colimit is exactly generated by all these groups.
\end{proof}
\begin{example}
    For $X=\AA^1_{\CC}=\Spec(\CC[T])$ the affine line over the complex numbers, the residue field $\kappa(p)$ at a non-trivial prime ideal $p=(T-a)\subset \CC[T]$ is algebraically closed, namely, it is isomorphic to $\CC$. Thus all the quotients $D_p/I_p$ are trivial or, equivalently, every inertia group $I_p$ already is all of the decomposition group $D_p$. By the last proposition, the group $\pionecond(X,\etabar)(\ast)$ is then the quotient of $\Gal_{\CC(T)}$ by the abstract normal closure of the subgroup generated by the decomposition groups, which was shown to be non-trivial in \cite[Corollary 7.8]{The_condensed_homotopy_type}.
\end{example}
\begin{example}\label[example]{ex:fundamental_group_integers}
Let $X=\Spec(\ZZ)$ and $\etabar \rightarrow \Spec(\ZZ)$ be a geometric point corresponding to the generic point $\eta \in \Spec(\ZZ)$. We argue that the condensed fundamental group $\pionecond(\Spec(\ZZ), \etabar)$ is non-trivial although the \'etale fundamental group is trivial, see \cite[Example 11.2.2]{Exodromy}.
By \cref{prop:Dedekind_domain}, we need to show that the quotient of $\Gal_{\QQ}$ by the \textit{abstract} normal closure $N$ of the subgroup generated by the inertia groups $I_p$ is non-trivial.\footnote{Note that the quotient by the \textit{topological} normal closure of the subgroup generated by all inertia groups is known to be trivial: By infinite Galois theory, this closed subgroup corresponds to a field extension of $\QQ$ that has to be unramified at all $p$.
But there is no such non-trivial extension of $\QQ$, 
see \cite[Chapter III, §2, Theorem 2.18]{Algebraicnumbertheory}.}
We learned the following argument from Jakob Stix: 
We consider the abelianization map 
\begin{align}\label{eq:abelianization}
  \Gal_{\QQ} \rightarrow \Gal_{\QQ}^{\ab}  
\end{align}
on topological groups, i.e., $\Gal_{\QQ}^{\ab}$ is the quotient of $\Gal_{\QQ}$ by the closure of the (group-theoretical) commutator group.
The topological group $\Gal_{\QQ}^{\ab}$ is the Galois group of the maximal abelian extension $\QQ^{\mathrm{ab}}$ of $\QQ$, i.e., the union over all finite abelian extensions in the separable closure. Further, we have $$\Gal_{\QQ}^{\ab}\overset{(1)}{\simeq} \Gal(\QQ(\mu_{\infty})/\QQ) \overset{(2)}{\simeq} \hat{\ZZ}^{\times}=\prod_p \ZZ_p^{\times}\period$$
Here the identification $(1)$ follows as by the (global) Kronecker-Weber theorem (e.g., \cite[Chapter V, §1, Theorem 1.10]{Algebraicnumbertheory}) the maximal abelian extension $\QQ^{\mathrm{ab}}$ of $\QQ$ is the maximal cyclotomic extension $\QQ(\mu_{\infty})$ of $\QQ$, i.e., the field  obtained by adjoining all roots of unity.
The Galois group of this extension is the cofiltered limit, indexed by the natural numbers $\NN$, over the Galois groups $\Gal(\QQ(\mu_{n})/\QQ)=(\ZZ/n\ZZ)^{\times}$ where $\mu_n$ is the group of all $n$-th roots of unity, compare \cite[Chapter IV, §2, Example 6]{Algebraicnumbertheory}. This gives the identification $(2)$.
For every non-zero prime $p$, the image of every inertia group $I_p\subset \Gal_{\QQ}$ under the abelianization map (\ref{eq:abelianization}) is an inertia group $I_p^{\mathrm{ab}}$ of $\QQ^{\mathrm{ab}}$ which is given as follows:
We know that the decomposition group $D_p\subset \Gal_{\QQ}$ is the absolute Galois group of the $p$-adic completion $\QQ_p$ by~\cite[Chapter II, §3, Corollary~4]{Localfields}.
Its image under (\ref{eq:abelianization}) is the Galois group $\Gal_{\QQ_p}^{\mathrm{ab}}=\Gal(\QQ_p^{\mathrm{ab}}, \QQ_p)$ of the maximal abelian extension of $\QQ_p$.
The \textit{reciprocity map} $\QQ_p^{\times} \rightarrow \Gal_{\QQ_p}^{\mathrm{ab}}$ then identifies $\ZZ_p^{\times}\subset \QQ_p^{\times}$ with the abelian inertia subgroup $I_p^{\mathrm{ab}}\subset\Gal_{\QQ_p}^{\mathrm{ab}}$ by \cite[Lemma 9.8]{classfieldtheory}. As in abelian groups every subgroup is normal, the image of the normal subgroup $N$ under (\ref{eq:abelianization}) is the subgroup $\bigoplus_p \ZZ_p^{\times}\subseteq \hat{\ZZ}^\times$ generated by the abelian inertia groups $I_p^{\ab}=\ZZ_p^{\times}$. As it is a proper subgroup, we deduce that the abelianization $(\Gal_{\QQ}/N)^{\ab}\simeq\hat{\ZZ}^{\times}/\bigoplus_p \ZZ_p^{\times}$ is non-trivial. This implies also the quotient $\Gal_{\QQ}/N$ to be non-trivial.
\end{example}
\begin{remark}
We learned from Marcin Lara that $\Gal_{\QQ}/N\neq 1$ also follows by an argument on cyclotomic characters which shows that the subgroup $N$ cannot contain complex conjugation. Indeed, every $g\in N$ can be written as a finite word $g=g_1g_2\cdot\dots\cdot g_n$ where each $g_i$ is in the normalization of some inertia group $I_{p_i}$. If we pick a prime $\ell$ larger than any of the $p_i$ appearing in the presentation of $g$, then evaluation of the cyclotomic character $\chi_{\ell}\colon G_{\QQ}\rightarrow \ZZ_{\ell}^{\times}$ on $g$ is trivial in $\ZZ_{\ell}^{\times}$. Indeed, the cyclotomic character $\chi_{\ell}$ is defined by the action of $G_{\QQ}$ on all $l$-power roots of unity $\zeta_{\ell
^n}$. As for all $n \geq 1$ the extension $\QQ(\zeta_{\ell
^n})/\QQ$ is unramified at all $p\neq \ell$, see \cite[Chapter I, Corollary 10.4]{Algebraicnumbertheory}, the restrictions of the inertia subgroups $I_{p_i}\subset G_{\Q}$ to  $\QQ(\zeta_{\ell
^n})$ are trivial and thus the $I_{p_i}$ act trivially on all $\zeta_{\ell
^n}$. For the complex conjugation $\tau\in \Gal_{\QQ}$ we have $\chi_{\ell}(\tau)=-1\in \ZZ_{\ell}^{\times}$ for every cyclotomic character $\chi_{\ell}\colon \Gal_{\QQ}\rightarrow \ZZ_{\ell}^{\times}$. This is true as every root of unity is sent to its inverse under complex conjugation. By this, the complex conjugation cannot be presented as such a $g$ and thus is not contained in $N$.
\end{remark}
\section{A classification of condensed contractible schemes}\label[section]{sec:Trivial_Shapes}
We can prove properties of the condensed homotopy type $\CondShape{X}$ by studying $\Gal(X)$ and deduce from there.
Here we want to give a partial classification of \textit{condensed contractible} schemes via properties of their Galois categories forcing the condensed homotopy type to become trivial.
\begin{remark}
Our use of the term \textit{trivial} is not totally well-suited. 
One could widen it to the case that the condensed homotopy type corresponds to a discrete collection of points.
For example, this happens if the given scheme is a disjoint union of condensed contractible schemes in the sense defined previously.
Thus, in the subsequent discussion, one should have \textit{condensed connected} schemes in mind, i.e., schemes $X$ such that $\pizerocond(\CondShape{X})=\ast$.    
\end{remark}
\subsection{Galois categories with trivial classifying anima}
\begin{definition}
We say that a condensed $\infty$-category $\Ccal\in \CondCat$ 
has a \textit{terminal}, respectively, an \textit{initial} object or is \textit{(co)filtered}
if this is true level-wise for all of the $\infty$-categories $\Ccal(K)$ with $K\in \Extr$.
\end{definition}
\begin{corollary}
Let $\Ccal \in \CondCat$ be a condensed $\infty$-category.
Then, the condensed classifying anima $\Bup\Ccal$ of $\Ccal$ is contractible if $\Ccal$ has a terminal object, respectively, an initial object, or, more generally, is (co)filtered.
\end{corollary}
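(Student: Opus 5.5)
The plan is to reduce the statement to a levelwise question about ordinary $\infty$-categories. By \cref{def:cond_classifiying_anima}, the condensed classifying anima $\Bcond\Ccal$ is computed levelwise: $(\Bcond\Ccal)(K)=\Bup(\Ccal(K))$ for every $K\in\Extr$. The terminal object of $\CondAni\simeq\Funcross(\Extrop,\Ani)$ is the constant functor at $\ast$; finite products in this functor category are computed pointwise. Hence $\Bcond\Ccal\simeq\ast$ holds as soon as each $\Bup(\Ccal(K))$ is contractible. By definition, the hypotheses (terminal object, initial object, (co)filtered) hold levelwise. So it suffices to prove the classical claim: an $\infty$-category $\Dcal$ with a terminal object, an initial object, or which is filtered or cofiltered, has contractible classifying anima $\Bup\Dcal$.

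For this claim I would use $\Bup\Dcal\simeq\colim_{\Dcal}\ast$, as recalled in the proof of \cref{prop:Dedekind_domain}. If $\Dcal$ has a terminal object $t$, then $t\colon\ast\to\Dcal$ is cofinal, since each slice $\Dcal_{d/}\times_{\Dcal}\{t\}$ is contractible. Therefore $\colim_{\Dcal}\ast\simeq\ast$. Alternatively, the natural transformation $\mathrm{id}_{\Dcal}\Rightarrow\mathrm{const}_t$ becomes a homotopy after applying $\Bup$, because $\Bup$ sends natural transformations to homotopies via $\Dcal\times\Delta^1\to\Dcal$ and $\Bup\Delta^1\simeq\ast$. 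This shows that $\mathrm{id}_{\Bup\Dcal}$ is homotopic to a constant map. The initial case follows by applying the terminal case to $\Dcal^{\op}$, using $\Bup(\Dcal^{\op})\simeq\Bup\Dcal$.

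For the filtered case I would cite \HTT{Lemma}{5.3.1.20}, which says that filtered $\infty$-categories are weakly contractible. The cofiltered case again follows by passing to opposites. The direct argument is that $\colim_{\Dcal}\ast$ is a filtered colimit of anima, so its homotopy groups are filtered colimits. Any finite diagram in $\Dcal$, for instance a sphere $\partial\Delta^n\to\Dcal$, extends to a cone $(\partial\Delta^n)^{\triangleright}\to\Dcal$, and is therefore null-homotopic after realization.

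The only step that needs some care is the filtered case, since it uses the $\infty$-categorical notion of filteredness: every finite simplicial set diagram extends to a cocone. It does not follow from the existence of cocones on finite sets and parallel pairs alone. For the $1$-categories relevant later, namely $\Gal(X)(K)$, the classical $1$-categorical notion suffices, because the nerve of a filtered $1$-category is filtered as an $\infty$-category. So I would simply invoke Lurie's lemma. The remaining steps are formal.
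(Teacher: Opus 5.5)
Your proposal is correct and follows essentially the same route as the paper. Both reduce levelwise, using that $\Bcond$ (and hence contractibility) is computed objectwise in $K\in\Extr$, and then invoke the classical fact that an $\infty$-category with a terminal or initial object, or which is (co)filtered, is weakly contractible. The paper simply cites Kerodon for these facts, whereas you cite Lurie and sketch the terminal case via cofinality, which adds detail but does not change the argument.
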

\begin{proof}
This follows directly from the corresponding results on $\infty$-categories:
An $\infty$-category with an initial or terminal object is weakly contractible by \cite[\href{https://kerodon.net/tag/02P2}{Corollary 02P2}]{Kerodon}. The same holds for the (co)filtered case \cite[\href{https://kerodon.net/tag/02PH}{Proposition 02PH}]{Kerodon}.
Weakly contractible exactly means that the classifying anima becomes contractible by definition \cite[\href{https://kerodon.net/tag/00UA}{Definition 00UA}]{Kerodon} and left adjointness of the functor $\Bup\colon \Catinfty \rightarrow \Ani$.
\end{proof}
We are going to study conditions we must place on a scheme $X$ in order to apply this corollary to the Galois category $\Gal(X)$.
Along the way, we will see that for $\Gal(X)\in \Cond(\Ccal)$, the notions of terminal and filtered, respectively, initial and cofiltered collapse:
\begin{proposition}\label[proposition]{prop:filt_term_ini} Let $X$ be a qcqs scheme. Then, the following is always true:
\begin{itemize}
    \item[a)] $\Gal(X)$ has a terminal object $\iff$ $\Gal(X)$ is filtered.
    \item[b)] $\Gal(X)$ has an initial object $\iff$ 
    $\Gal(X)$ is cofiltered.
\end{itemize}
\end{proposition}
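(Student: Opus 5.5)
The forward implications are formal, so the work is in the converses. A category with a terminal object is filtered, since every finite diagram admits a cocone to the terminal object. Dually, an initial object makes a category cofiltered. Applied level-wise for every $S\in\Extr$, this gives the "$\Rightarrow$" directions of a) and b).

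For the converses I will use a rigidity property of $\Gal(X)(S)$: its endomorphisms are automorphisms, and parallel morphisms are detected after composing. By \cref{prop:Galois_weaklycontractible}, for $X$ affine, $\Gal(X)(S)$ is the opposite of $X_S^{\mathrm{wc}}$. For general qcqs $X$, I would reduce to $S=\ast$ and to a local argument, as follows.

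\textbf{Key observation at $S=\ast$.} In $\Pt(X_{\et})$ every endomorphism is an automorphism. Indeed, a morphism $\xbar\to\xbar$ lies over the identity specialization $x_0\le x_0$, and by \cref{rem:Pointsetaletopos2} the set of such maps is the absolute Galois group of $\kappa(x_0)$, which is a group.

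\textbf{Converse of a).} Assume $\Gal(X)$ is filtered. Level-wise, a filtered category in which all endomorphisms are invertible should have an essentially unique "top" object, and I expect it to be terminal. The argument proceeds in three steps.

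\emph{Step 1: every object admits a map to one object $t$.} Filteredness gives cocones over pairs of objects. Since morphisms of $\Gal(X)(\ast)$ go along generalization in the Zariski poset $\Zpos{X}$, and since for a qcqs scheme with a filtered poset image there is a unique generic point (use quasicompactness), choose $t$ to be a geometric point over that generic point.

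\emph{Step 2: every object maps to $t$.} For any object $a$, filteredness provides $a\to c$ and $t\to c$. The map $t\to c$ forces $c$ to lie over the generic point as well, so $t\to c$ is an isomorphism; composing with its inverse gives $a\to t$.

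\emph{Step 3: the map $a\to t$ is unique.} Given parallel maps $f,g\colon a\rightrightarrows t$, filteredness gives $h\colon t\to c$ with $hf=hg$. As in Step 2, $h$ is invertible, so $f=g$.

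For general $S\in\Extr$, I would run the same three steps. The needed input is that an $S$-point over a generic point still has only invertible self-maps, and that it exists. This can be checked fiberwise over $s\in S$ using the stalk functors $\Gal(X)(S)\to\Gal(X)(\ast)$ together with continuity.

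\textbf{Converse of b).} This is dual. Cofilteredness forces a unique closed point $x_0$ lying below everything; the object over $x_0$ receives maps into every object, and invertibility of endomorphisms again yields uniqueness. Alternatively, for $X$ affine I can invoke \cref{prop:alter_Gal_prop}: the object $A_S$ is weakly initial, and cofilteredness equalizes any two maps out of it via an endomorphism, which must be an automorphism, so $A_S$ is initial.

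\textbf{Main obstacle.} The hard step is passing from $S=\ast$ to arbitrary $S\in\Extr$. I need that a morphism in $\Gal(X)(S)$ is invertible, or that two parallel morphisms agree, as soon as this holds at every point $s\in S$. I would argue this via the description of $\Gal(X)$ as a condensed category arising from a $\Cat(\ProAnifin)$ object: there, conservativity and faithfulness are detected on points, as in the proof of \cref{ex:wlocalring}.
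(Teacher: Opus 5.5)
Your argument is correct, but it is organized differently from the paper's. The paper proves the converses only on underlying categories (\cref{subsec:underlying_cats}, which is essentially your Steps 1--3 at $S=\ast$). It then reaches arbitrary $S$ through the geometric classification, as explained in \cref{rem:cofil_term_ini}. Filteredness of $\Gal(X)$ gives a terminal object of $\Pt(X_{\et})$. By \cref{prop:constraintsterm}, \cref{cor:henselirred} and Schr\"oer's criterion \cref{prop:eslgeomuni}, this forces $X$ to be irreducible and everywhere strictly local. For such $X$, the constant map at the generic point is terminal in $\Gal(X)(S)=\Hom_{qc}(S,|X|)$ by \cref{prop:everwhstrictlylocal}. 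The initial case goes dually through \cref{prop:initalconditions}, \cref{prop:localring_homeo} and \cref{prop:alter_Gal_prop}.

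You instead run the categorical argument directly at every level $S$, using the constant family $t_S$ (resp.\ $i_S$). This is shorter and avoids Schr\"oer's results and the weakly contractible description altogether. It works for any coherent topos whose category of points is stratified over a poset with groupoid fibres. However, it consumes filteredness at every level $S$. The paper's route shows more: a terminal (initial) object of $\Pt(X_{\et})$ alone already propagates to all of $\Gal(X)$, and it also yields the geometric classification.

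Three points to tighten. First, your ``main obstacle'' is immediate and does not need the $\Cat(\ProAnifin)$ picture. The conservativity used in the proof of \cref{ex:wlocalring} is about functors of condensed categories being detected on underlying categories, which is a different statement. A morphism in $\Gal(X)(S)$ is a natural transformation between functors to $\Sh(S,\Set)$. It is therefore invertible iff all its components are, iff all their stalks at points $s\in S$ are, i.e.\ iff its image under each $\Gal(X)(S)\to\Gal(X)(\ast)$ is invertible.

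Second, what you actually need is that every morphism out of $t_S$ (resp.\ into $i_S$) is invertible, not merely every endomorphism. This holds stalkwise: $s^*t_S=t$, any morphism out of a geometric point over $\eta$ lies over $\eta\le\eta$, and the fibres of $\Pt(X_{\et})\to\Zpos{X}$ are groupoids.

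Third, the existence of $t_S$ needs no fibrewise or continuity argument. It is simply the image of $t$ under the pullback functor $\Gal(X)(\ast)\to\Gal(X)(S)$ along $S\to\ast$.
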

As any category with a terminal object is filtered and any category with an initial object is cofiltered, the respective implications from the left to the right are clear.
On the level of the underlying category $\Gal(X)(\ast)=\Pt(X_{\et})$, the equivalence follows from corresponding results for the Zariski poset $ \Zpos{X}$. Recall the definition of the poset $ \Zpos{X} $ from \cref{rec:Zariskiposet}.
\subsubsection{\cref{prop:filt_term_ini} on underlying categories}\label[subsection]{subsec:underlying_cats}
We show the statement of \cref{prop:filt_term_ini} on the level of the underlying category $\Gal(X)(\ast)$.
While first reading, this subsection can be skipped. Continue with \cref{sec:Classification} on the classification of schemes whose Galois category has an initial or terminal object.
\begin{lemma}\label[lemma]{lem:terminal_initial_underlying_cats}
Let $X$ be a qcqs scheme.
Then the following is always true:
\begin{itemize}
    \item[a)] $\Pt(X_{\et})$ has a terminal object $\iff$ $\Pt(X_{\et})$ is filtered.
    \item[b)] $\Pt(X_{\et})$ has an initial object $\iff$ 
    $\Pt(X_{\et})$ is cofiltered.
\end{itemize}
\end{lemma}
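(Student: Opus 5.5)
We only need the implications from right to left, since a terminal (initial) object already makes a category filtered (cofiltered). The plan is to push the (co)filteredness of $\Pt(X_{\et})$ down to the Zariski poset $\Zpos{X}$ along the functor $s$ of \cref{ex:Gal(X)_is_a_stratified_anima}. There it forces a greatest (least) point, and we then build the terminal (initial) object of $\Pt(X_{\et})$ by hand from Galois groups and strict henselizations. The functor $s$ is essentially surjective onto $\Zpos{X}$, because every point has a geometric point over it. It is also full in the weak sense that $x_0\le y_0$ implies $\Hom(\xbar,\ybar)\neq\emptyset$, by \cref{rem:Pointsetaletopos2}. Hence if $\Pt(X_{\et})$ is filtered, every two points of $|X|$ have a common generization, and if it is cofiltered, a common specialization.

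\emph{Case a).} Suppose $\Pt(X_{\et})$ is filtered. Since $X$ is qcqs, $|X|$ is spectral, and we use quasi-compactness in the constructible topology. Any two irreducible components meet in their generic points, so all generic points coincide. Hence $X$ is irreducible with generic point $\eta$, which is the largest element of $\Zpos{X}$. Fix a geometric point $\etabar$. The endomorphism monoid of $\etabar$ is $\Gal_{\kappa(\eta)}$, and we apply filteredness to the parallel pair $\sigma,\mathrm{id}\colon \etabar\rightrightarrows\etabar$. It yields a morphism $u\colon\etabar\to\zbar$ with $u\sigma=u$. The target $\zbar$ lies over a generization of $\eta$, so it lies over $\eta$ itself and $u$ is an automorphism. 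Therefore $\sigma=\mathrm{id}$, and $\kappa(\eta)$ is separably closed. It remains to show that $\etabar$ is terminal, i.e.\ that $\Hom(\xbar,\etabar)$ is a singleton for every $\xbar$. This set is the set of lifts $\etabar\to\Spec(\mathcal O^{\mathrm{sh}}_{X,x})$, and it is a torsor under $\Gal_{\kappa(\eta)}=1$ over the set of points of the strict localization lying over $\eta$. For a general $\xbar$ there may be several such points. Applying filteredness to parallel pairs $\xbar\rightrightarrows\etabar$ and using that all endomorphisms of $\etabar$ are trivial, these lifts coincide. Then $\etabar$ is terminal, and the lemma's conclusion is proved without yet identifying the geometric meaning. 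The equivalence with "everywhere strictly local and irreducible" is deferred to \cref{thm:Galterminal}.

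\emph{Case b).} Suppose $\Pt(X_{\et})$ is cofiltered. By the dual argument, any two points have a common specialization. Closed points exist in the qcqs scheme $X$, so any two closed points coincide. Hence there is a unique closed point $x$, every point specializes to it, and $X$ is local. Since $X$ is qcqs, $|X|$ is spectral, so every point does specialize to a closed one. The candidate initial object is $\xbar$. Applying cofilteredness to pairs $\sigma,\mathrm{id}\colon\xbar\rightrightarrows\xbar$ with $\sigma\in\Gal_{\kappa(x)}$ gives $v\colon\wbar\to\xbar$ with $\sigma v=v$. Here $\wbar$ lies over a specialization of $x$, so over $x$, and $v$ is invertible; thus $\kappa(x)$ is separably closed. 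Next take a geometric point $\ybar$ and two maps $a,b\colon\xbar\rightrightarrows\ybar$, i.e.\ two lifts of $\ybar$ to $\Spec(\mathcal O_{X,x}^{\mathrm{sh}})$. Cofilteredness gives $c\colon\zbar\to\xbar$ with $ac=bc$, where again $\zbar$ lies over $x$ and $c$ is invertible. So $a=b$, and $\xbar$ is initial since morphisms out of it always exist.

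\emph{Main obstacle.} The delicate step is to justify the "lies over $\eta$ / over $x$ hence invertible" claims. This needs the fact that a morphism in $\Pt(X_{\et})$ between geometric points over the same point of $|X|$ is an isomorphism, i.e.\ that $s$ is conservative. This follows from \cref{rem:Pointsetaletopos}: a lift of a geometric point over $x_0$ into $\Spec(\mathcal O^{\mathrm{sh}}_{X,x_0})$ must hit the closed point, and such lifts form a $\Gal_{\kappa(x_0)}$-torsor. The other subtle point is the generization argument for irreducibility in a); there I would argue directly with two generic points rather than with quasi-compactness.
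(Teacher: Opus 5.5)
Your proof is correct and follows essentially the same route as the paper. You push (co)filteredness of $\Pt(X_{\et})$ down along $s$ to (co)directedness of $\Zpos{X}$, which forces $X$ to be irreducible (local). The coequalizing axiom, together with the fact that every morphism out of $\etabar$ (into $\xbar$) is invertible, then makes the weakly terminal (initial) object strict, exactly as in the paper's general lemma. Your separate step showing the residue field is separably closed is just the special case of parallel endomorphisms of $\etabar$ (resp.\ $\xbar$), so it can be dropped.
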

Note that for posets, being (co)filtered is equivalent to being \textit{(co)directed} and terminal, respectively, initial objects correspond to \textit{maximal}, respectively, \textit{minimal} elements.
\begin{lemma}\label[lemma]{lem:XZarconditions} Let $X$ be a qcqs scheme.
The following assertions are equivalent.
\begin{itemize}
\item[a)] The Zariski poset $ \Zpos{X}$ has a maximal element (respectively, a minimal element).
\item[b)] The Zariski poset $ \Zpos{X}$ is (co)directed.
\item[c)] The scheme $X$ is irreducible (respectively, local, i.e., corresponds to a local ring).
\end{itemize} 
\end{lemma}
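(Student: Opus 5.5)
Write $\leq$ for the specialization order of \cref{rec:Zariskiposet}: $x\leq y$ iff $x\in\overline{\{y\}}$. The proof is a cycle of implications, done separately for the ``maximal'' and ``minimal'' versions. In both, a)$\Rightarrow$b) is formal, since a poset with a greatest (respectively least) element is directed (respectively codirected). The real content lies in b)$\Rightarrow$c) and c)$\Rightarrow$a). Here ``maximal element'' is read, as in the remark before the lemma, as a \emph{greatest} element, i.e.\ a terminal object; a merely maximal element is not enough, since a reducible scheme with finitely many irreducible components has maximal points.

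\textbf{Maximal case.} For c)$\Rightarrow$a): if $X$ is irreducible, then $X$ is a nonempty irreducible closed subset of the underlying sober spectral space. It therefore has a unique generic point $\eta$ with $X=\overline{\{\eta\}}$. Every $x$ then satisfies $x\leq\eta$, so $\eta$ is the greatest element.

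For b)$\Rightarrow$c), assume $\Zpos{X}$ is directed (in particular nonempty). Suppose $X=Z_1\cup Z_2$ with $Z_1,Z_2$ closed. If neither $Z_i$ equals $X$, pick $x_1\notin Z_2$ and $x_2\notin Z_1$. Directedness gives $y$ with $x_1,x_2\leq y$, i.e.\ $x_1,x_2\in\overline{\{y\}}$. Now $y$ lies in $Z_1$ or in $Z_2$, and closed sets are stable under specialization. So $\overline{\{y\}}$ is contained in $Z_1$ or in $Z_2$, contradicting $x_2\notin Z_1$ or $x_1\notin Z_2$ respectively. Hence $X$ is irreducible.

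\textbf{Minimal case.} Here ``local'' means $X\cong\Spec(A)$ with $A$ local.

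For c)$\Rightarrow$a): the unique maximal ideal $\mathfrak m$ of $A$ contains every prime, so $\mathfrak m\in V(\mathfrak p)=\overline{\{\mathfrak p\}}$ for all $\mathfrak p$. Thus $\mathfrak m$ is the least element.

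For b)$\Rightarrow$c), assume $\Zpos{X}$ is codirected. First, $X$ has a unique closed point. Closed points exist in any nonempty qcqs scheme (quasi-compactness), and two closed points $x,x'$ admit a common specialization $z\leq x,x'$; closedness forces $z=x=x'$. Call this point $x_0$.

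Second, every point specializes to $x_0$. For any $y$, the nonempty closed set $\overline{\{y\}}$ is quasi-compact and so contains a point closed in it. This point is closed in $X$, hence equals $x_0$, so $x_0\leq y$.

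Third, $X=\Spec(\mathcal{O}_{X,x_0})$. Take an affine open $U\ni x_0$. Since every point generalizes $x_0$ and open sets are stable under generalization, $U=X$. So $X=\Spec(A)$ with a unique maximal ideal, i.e.\ $A$ is local.

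\textbf{Main obstacle.} The step needing the most care is this third step of the minimal case, where one passes from ``every point specializes to $x_0$'' to ``$X$ is affine local''. The essential ingredients are the existence of closed points in quasi-compact spaces and that open sets are closed under generalization. Everything else is point-set bookkeeping in the specialization poset.
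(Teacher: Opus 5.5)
Your proof is correct and follows the paper's route: a)$\Rightarrow$b) is formal, b)$\Rightarrow$c) produces a unique generic (respectively closed) point, and c)$\Rightarrow$a) is immediate; you also rightly read ``maximal/minimal'' as greatest/least element, as the paper's preceding remark intends. The differences are minor. In the directed case you argue directly from the definition of irreducibility rather than through two distinct generic points of irreducible components, and in the codirected case you first show that every point specializes to $x_0$, whereas the paper applies the existence of closed points to the complement of an affine open around $x_0$.
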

\begin{proof}
The equivalences of a), b) and c) rely on the definition of the order by specializations.
For a)$\implies b)$ note that a poset with a maximal or minimal object is, in particular, (co)directed. For  b) $\implies$ c), we have to argue why there is a unique generic (closed) point:
The poset $ \Zpos{X}$ to be (co)directed means that any two points are specializations (generalizations) of a common point.
We handle the two cases separately.
\begin{itemize}
    \item[\textbf{1.}] \textbf{The directed case:}
    To satisfy the condition on specializations, the sober topological space $|X|$ cannot have more than one irreducible component since two maximal points, i.e., the unique generic points of irreducible components, do not specialize further.
    This implies irreducibility.
    \item[\textbf{2.}] \textbf{The codirected case:}
    Every closed point of $|X|$ does not generalize further as there is no other point lying in its closure. Hence, a scheme with more than one closed point contradicts the condition on $ \Zpos{X}$ to be codirected.
    A scheme that is quasi-compact and contains exactly one closed point is known to be equivalent to the spectrum of a local ring.
    This follows from the fact that every quasi-compact scheme, and thus also every closed subset of a quasi-compact scheme, has a closed point.
    Then the complement of an affine open containing the unique closed point has to be empty.
    This already implies that the scheme itself is affine with a unique closed point corresponding to a unique maximal ideal of the underlying ring.
    \end{itemize}
From c) to a), it is clear that the unique generic point or, respectively, the unique closed point of the scheme constitutes a maximal or, respectively, a minimal element of $ \Zpos{X}$.
\end{proof}
\begin{remark}\label[remark]{rem:filtered_irreducible}
If the category of points $\Gal(X)(\ast)=\Pt(X_{\et})$ of a qcqs scheme $X$ is (co)filtered, the underlying Zariski poset $ \Zpos{X}$ is (co)directed.
By the previous lemma, the scheme already needs to be irreducible, respectively, local.     
\end{remark}
\begin{definition}
We call an object in an $\infty$-category \textit{weakly terminal} if all mapping spaces into it are non-empty. Similarly, a \textit{weakly initial} object is defined by all mapping spaces out of it to be non-empty.  
\end{definition}
This definition differs from the one of initial, respectively, terminal objects only by the fact that no uniqueness of morphisms (up to a contractible choice) is required.
\begin{lemma}\label[lemma]{lem:weaklyobjects}
Let $X$ be a qcqs scheme.
The category $\Gal(X)(\ast)$ has a weakly terminal (respectively, weakly initial) object if and only if $X$ is irreducible (respectively, local).\footnote{This statement is already mentioned as an equivalence for the profinite category version of $\Gal(X)$ in \cite[Corollary 12.3.5]{Exodromy}.
For sake of completeness, we include a proof here.}
\end{lemma}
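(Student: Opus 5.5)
We work throughout with the explicit description of $\Gal(X)(\ast)=\Pt(X_{\et})$ from \cref{rem:Pointsetaletopos} and \cref{rem:Pointsetaletopos2}. Objects are geometric points $\xbar\to X$. The set $\Hom(\xbar,\ybar)$ is non-empty if and only if $x_0$ is a specialization of $y_0$, i.e.\ $x_0\leq y_0$ in $\Zpos{X}$. The statement is then reduced to \cref{lem:XZarconditions} via the functor $s\colon \Gal(X)(\ast)\to\Zpos{X}$ of \cref{ex:Gal(X)_is_a_stratified_anima}.

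\emph{Necessity.} Suppose $\xbar$ is weakly terminal. Every geometric point $\ybar$ admits a morphism $\ybar\to\xbar$, hence $y_0\leq x_0$. Every point $y_0\in|X|$ is the image of some geometric point (pick a separable closure of $\kappa(y_0)$), so $x_0$ is a maximal element of $\Zpos{X}$. By \cref{lem:XZarconditions}, $X$ is irreducible. Dually, a weakly initial $\xbar$ gives a minimal element $x_0$ of $\Zpos{X}$, so $X$ is local.

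\emph{Sufficiency.} If $X$ is irreducible with generic point $\eta$, take a geometric point $\etabar$ over $\eta$. For any $\ybar$ we have $y_0\leq\eta$, so by item 1 of \cref{rem:Pointsetaletopos2} there is a morphism $\ybar\to\etabar$. Hence $\etabar$ is weakly terminal. If $X=\Spec(A)$ is local with closed point $m$, any geometric point $\bar m$ over $m$ is weakly initial, since every $y_0$ specializes to $m$.

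\emph{Main obstacle.} The only nontrivial input is the non-emptiness criterion in \cref{rem:Pointsetaletopos2}, which we take as given. For completeness, its content is this. Given $x_0\leq y_0$, the point $y_0$ lies in the image of $\Spec(\mathcal{O}_{X,x_0})$. Since $\Spec(\mathcal{O}_{X,x}^{\mathrm{sh}})\to\Spec(\mathcal{O}_{X,x_0})$ is surjective (it is ind-étale and faithfully flat), some point $y'$ of the strict localization maps to $y_0$. A separable closure of $\kappa(y')$ is then a separable algebraic extension of $\kappa(y_0)$, so it embeds into $\kappa(y)$. This yields the required lift $\ybar\to X_{(x)}$. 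We would cite this rather than reprove it.
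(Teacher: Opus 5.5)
Your proof is correct and is essentially the paper's argument. Both use the non-emptiness criterion of \cref{rem:Pointsetaletopos2} to turn a weakly terminal (respectively, weakly initial) geometric point into a point of $|X|$ that generalizes (respectively, specializes to) every other point, and then conclude irreducibility (respectively, locality) as in \cref{lem:XZarconditions}, which you cite where the paper repeats the open-affine argument; you also write out the converse direction, which the paper leaves implicit.
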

\begin{proof}
Assume that $\Gal(X)(\ast)$ has a weakly terminal, respectively, a weakly initial, object.
By the description of $\Gal(X)(\ast)$ in \cref{rem:Pointsetaletopos} and \cref{rem:Pointsetaletopos2}, this means nothing else than that there exists a geometric point of $X$ such that every other point is a specialization, respectively, a generalization of this point.
In other words, the scheme $X$ has a generic point, respectively, a special point that it is contained in the closure of every other point.
As the closure of the irreducible set consisting of the generic point is irreducible again, the existence of a generic point already implies that $X$ is irreducible.
For locality, we use the same argument as in  \cref{lem:XZarconditions}.
We take $U$ an open affine containing the special point.
Its complement has to be empty as otherwise the special point would be contained in it, which contradicts the choice of $U$.
Thus the scheme itself is affine and the special point corresponds to a prime ideal containing all other prime ideals.
This already implies that the underlying ring has a unique maximal ideal, i.e., is a local ring.
\end{proof}
\begin{example}
In general, in neither of the two cases the "weakly" condition can be dropped: Consider the spectrum $\Spec(k)$ of $k$ a non separably closed field.
This scheme is both irreducible and local but its Galois category $$\Gal(X)\simeq\CondShape{X}\simeq \BGal_k$$ (see \cref{ex:condshape_field}) does not have a (strictly) terminal or initial object.
\end{example}
\begin{remark}\label[remark]{rem:morphism_empty}
A weakly terminal or a weakly initial object of $\Gal(X)(\ast)$ carries an additional property: All morphism sets out of the weakly terminal and into the weakly initial object are non-empty if and only if they belong to the automorphism set of the object, compare \cref{rem:Pointsetaletopos2}.
\end{remark}
The notions of weakly terminal and weakly initial can be extended to condensed $\infty$-categories in a canonical way by requiring it to hold level-wise.
From \cref{lem:weaklyobjects}, we can deduce necessary conditions on $X$ to determine a terminal or an initial object on the whole Galois category.
\begin{corollary}\label[corollary]{cor:weakly_conditiions} Let $X$ a qcqs scheme. 
If the Galois category $\Gal(X)$ has a (weakly) terminal object, then $X$ is irreducible.
If $\Gal(X)$ has a (weakly) initial object, then $X$ is the spectrum of a local ring.
\end{corollary}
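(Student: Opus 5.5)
The plan is to reduce the condensed statement to the underlying category and then apply \cref{lem:weaklyobjects}. By definition, a condensed $\infty$-category has a (weakly) terminal or (weakly) initial object if and only if this holds level-wise for every $K\in \Extr$. The one-point space $\ast$ is extremally disconnected, so $\ast\in\Extr$. Hence if $\Gal(X)$ has a (weakly) terminal object, then in particular the category $\Gal(X)(\ast)$ does. Likewise, if $\Gal(X)$ has a (weakly) initial object, then so does $\Gal(X)(\ast)$.

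Next, I will observe that a terminal object is in particular weakly terminal. Indeed, its mapping spaces from any object are contractible, hence non-empty. Dually, an initial object is weakly initial. So in both cases $\Gal(X)(\ast)\simeq\Pt(X_{\et})$ has a weakly terminal, respectively weakly initial, object.

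Finally, I will invoke \cref{lem:weaklyobjects}. A weakly terminal object of $\Gal(X)(\ast)$ forces $X$ to be irreducible. A weakly initial object forces $X$ to be local, and by the argument given there $X$ is then affine, i.e.\ the spectrum of a local ring. This gives both claims.

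I do not expect a real obstacle here. The only points to make explicit are that $\ast$ is indeed among the test objects in $\Extr$, and that passing from ``terminal'' to ``weakly terminal'' loses nothing needed for \cref{lem:weaklyobjects}, which only uses non-emptiness of the hom-sets in $\Pt(X_{\et})$ and the specialization description from \cref{rem:Pointsetaletopos2}.
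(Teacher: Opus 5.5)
Your proposal is correct and follows the paper's own argument. The paper likewise passes to the level $\ast\in\Extr$, so that the underlying category $\Gal(X)(\ast)$ inherits the (weakly) terminal or initial object, and then concludes via \cref{lem:weaklyobjects}.
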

\begin{proof}
This follows as if $\Gal(X)$ has any of these properties, the underlying category $\Gal(X)(\ast)$ needs to satisfy them as well.
\end{proof}
So far, we know the following by combining the statements in \cref{rem:filtered_irreducible}, \cref{lem:weaklyobjects} and Remark \cref{rem:morphism_empty}:
\begin{itemize}
    \item[a)] If $\Gal(X)(\ast)$ is filtered, it has a weakly terminal object $t$ with $\Hom(t,x)=\emptyset$ if $x\neq t$ and $\Hom(t,t)$ only contains isomorphisms.
    \item[b)] If $\Gal(X)(\ast)$ is cofiltered, it has a weakly initial object $i$ with $\Hom(x,i)=\emptyset$ if $x\neq i$ and $\Hom(i,i)$ only contains isomorphisms.
    \end{itemize}
By a general argument on categories, the combination of these properties forces the weakly objects to be strict in their property.
We only state the terminal case as the argument for the initial one follows accordingly.
\begin{lemma}
Let $\Ccal$ be a filtered category with a weakly terminal object $t\in \Ccal$ such that $\Hom(t,x)=\emptyset$ if $x\neq t$ and $\Hom(t,t)$ only contains isomorphisms.
Then, $\Ccal$ already has a terminal object.
\end{lemma}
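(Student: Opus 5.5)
We will show that $t$ itself is terminal. Two things must be checked: every object $x$ admits a morphism $x\to t$, and that morphism is unique.

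\textbf{Existence.} This is the weakly terminal hypothesis: $\Hom(x,t)\neq\emptyset$ for every $x\in\Ccal$.

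\textbf{Uniqueness.} Fix $x$ and two morphisms $f,g\colon x\to t$. Since $\Ccal$ is filtered, the parallel pair $f,g$ is coequalized by some $h\colon t\to y$, that is, $h\circ f=h\circ g$.
\begin{itemize}
\item Because $\Hom(t,y)$ is nonempty, the hypothesis $\Hom(t,x)=\emptyset$ for $x\neq t$ forces $y=t$. (Here ``$x\neq t$'' is read up to isomorphism, which is what the application in $\Pt(X_{\et})$ provides via \cref{rem:morphism_empty}.) So $h\in\Hom(t,t)$.
\item By hypothesis $h$ is an isomorphism. Composing with $h^{-1}$ gives $f=g$.
\end{itemize}
Hence $\Hom(x,t)$ is a singleton for all $x$, and $t$ is terminal.

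If $y$ is only assumed isomorphic to $t$, say via $\phi\colon y\xrightarrow{\sim}t$, the same argument applies to $\phi\circ h\in\Hom(t,t)$, which is again an isomorphism.

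The only delicate point is this bookkeeping about whether ``$x\neq t$'' means non-equal or non-isomorphic; the argument itself is a direct use of the equalizer axiom of filteredness.

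The initial case is dual. In a cofiltered category, two maps $f,g\colon i\to x$ are equalized by some $h\colon y\to i$. The hypothesis $\Hom(y,i)=\emptyset$ for $y\neq i$ forces $y\cong i$, so $h$ is an isomorphism and $f=g$.
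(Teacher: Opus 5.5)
Your proof is correct and is the paper's argument: existence comes from weak terminality, and uniqueness comes from coequalizing $f,g\colon x\to t$ by some $h\colon t\to y$ (filteredness), where the hypotheses on $\Hom(t,-)$ force $y=t$ and $h$ to be invertible, so $f=g$. Your remarks on reading ``$x\neq t$'' up to isomorphism and on the dual cofiltered case are harmless refinements. The paper's text says ``cofiltered'' at this step, but it uses exactly the filtered coequalizing property you invoke.
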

\begin{proof}
We need to show that all morphism sets into the weakly terminal object contain a unique morphism.
For an object $X\in \Ccal$, let $f,g\colon X \rightarrow t$ be any two parallel arrows to the weakly terminal object.
By $\Ccal$ being cofiltered, there is a morphism $h\colon t\rightarrow Y$ such that the pair of parallel arrows becomes equivalent after post-composition with $h$, i.e., we have $h\circ f=h\circ g$. Due to the additional assumptions on the morphisms sets, we not only have an identification $Y=t$ but also deduce that $h$ is an isomorphism. As $f,g$ become equivalent after post-composition with an isomorphism, we find $f=g$ showing that $t$ is indeed a terminal object.
\end{proof}
Altogether, we have argued the statement of \cref{lem:terminal_initial_underlying_cats} on the level of underlying categories.
That it also applies to whole $\Gal(X)$ will become apparent later. 
\subsection{Classification of Galois categories with initial or terminal object}\label[section]{sec:Classification}
For now, we will concentrate on the cases of $\Gal(X)$ having an initial or a terminal object and give subsequently a full classification.
\cref{cor:weakly_conditiions} already gives a hint in the right direction.
Particularly, we will show the following statements.
\begin{theorem}\label[theorem]{thm:Gal_ter_ini}
Let $X$ be a qcqs scheme.
We have the following equivalences:
\begin{itemize}
\item[a)] $\Gal(X)$ has a terminal object $\iff$ $X$ is irreducible and everywhere strictly local.
\item[b)] $\Gal(X)$ has an initial object $\iff$ $X$ is the spectrum of a strictly henselian local ring.
\end{itemize}
In particular, in both cases the schemes are condensed contractible, i.e., $\CondShape{X}=\ast$.
\end{theorem}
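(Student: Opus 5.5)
We treat the two parts separately and in each case reduce to the descriptions of $\Gal(X)(S)$ obtained in \cref{sec:via_weakly_proobjects}. The ``in particular'' statement is then immediate from the corollary that a condensed $\infty$-category with a levelwise terminal or initial object has contractible condensed classifying anima.

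\textbf{Part b).} For the direction ``$\Leftarrow$'', let $X=\Spec(A)$ with $A$ strictly henselian. By \cref{prop:alter_Gal_prop}, for every $S\in\Extr$ the ring $A_S$ is an initial object of $\Gal(X)(S)$, using the identification of \cref{prop:Galois_weaklycontractible}. For ``$\Rightarrow$'', \cref{cor:weakly_conditiions} shows that $X=\Spec(A)$ with $A$ local. It remains to see that $A$ is strictly henselian. Evaluating at $S=\ast$, an initial object of $\Pt(X_{\et})$ must be the geometric point $\bar{x}$ over the closed point, by \cref{rem:morphism_empty}. Its endomorphisms $\Gal_{\kappa(m)}$ must then be trivial, so $\kappa(m)$ is separably closed. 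We also need $A$ henselian. For this we use \cref{prop:alter_Gal_prop} at $S=\ast$: the candidate initial object is $A^{\mathrm{sh}}$, and any nontrivial automorphism of $A^{\mathrm{sh}}$ over $A$ produces a second map out of it. Since $\Gal(A^{\mathrm{sh}}/A)\simeq\Gal_{\kappa(m)}$ is trivial, we get $A^{\mathrm{sh}}=A^{h}$, and we have to conclude $A=A^h$. We expect this last step to be the subtle point. The plan is to use the initial object at all $S$, or the identification $A_S$, and argue via \cref{prop:alter_Gal_prop} that uniqueness of maps forces the strict henselization map to be an isomorphism. If that fails, we switch to the characterization through $\Pt(X_{\et})$ as strictly henselian local rings over $X$: an initial point lying over $X$ with a unique map to every other point should identify $X$ with its own strict localization.

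\textbf{Part a).} For ``$\Rightarrow$'', \cref{cor:weakly_conditiions} gives irreducibility. Evaluating at $\ast$, the terminal object is the generic geometric point $\bar\eta$. Every geometric point $\bar x$ then has a unique map $\bar x\to\bar\eta$, that is, a unique lift of $\bar\eta$ to the strict localization $X_{(x)}$. Uniqueness of this lift means that the generic fibre of $\Spec(\mathcal O^{\mathrm{sh}}_{X,x})$ has a single geometric point with trivial Galois action. Here we interpret ``everywhere strictly local'' as all local rings already being strictly henselian, as in the footnote to \cref{lem:wstrictly_coherentfunctor}. We then deduce this from the uniqueness: the Galois group of $\kappa(x)$ acts freely on lifts, which forces $\kappa(x)$ to be separably closed, and $\mathcal O_{X,x}$ must be strictly henselian, otherwise distinct points of $X_{(x)}$ over $\eta$ give distinct maps.

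For ``$\Leftarrow$'', fix $S\in\Extr$. By the Ultracategories description (\cref{cor:alter_Gal}, \cref{prop:Gal_wcontrrings}), when $X$ is affine we produce a terminal object $W_S\colonequals \lim_i\coprod_{S_i}\bar\eta$, i.e.\ the constant family at the generic point. We must check that every $S$-family of points admits a unique map to it. Pointwise this follows because each $\bar x\to\bar\eta$ is unique, the local rings being strictly henselian with $\eta$ generic. Continuity comes from the fact that the \'etale topos agrees with the Zariski topos for everywhere strictly local schemes (as in the proof of \cref{lem:wstrictly_coherentfunctor}). This gives $\Gal(X)(S)\simeq\Fun^{*,\coh}(\Sh(|X|),\Sh(S))$, which corresponds to continuous spectral maps $S\to|X|$ ordered by specialization, and the constant map to $\eta$ is terminal there. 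This reduction to the Zariski topos is the key step and also handles non-affine $X$.
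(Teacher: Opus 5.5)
Your ``$\Leftarrow$'' directions and the contractibility conclusion match the paper. For b) this is \cref{prop:alter_Gal_prop}. For a) the decisive step is the identification $\Gal(X)(S)\simeq\Hom_{qc}(S,|X|)$, coming from the agreement of \'etale and Zariski topoi, with the constant map to $\eta$ as terminal object, exactly as in \cref{prop:everwhstrictlylocal} and \cref{cor:eslcase}; the ultracategory detour is unnecessary. The gaps are in both ``$\Rightarrow$'' directions, exactly where the real content lies.

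In b) you never prove that $A$ is henselian; you only list plans. The first plan cannot work. Once $\kappa(m)$ is separably closed, $A^{\mathrm{sh}}=A^{h}$ has no nontrivial automorphisms over $A$, so the mechanism of \cref{prop:alter_Gal_prop} produces no extra maps and cannot detect non-henselianity. Moreover, an initial object of $\Gal(X)(S)$ is a priori only a retract of $A_S$. The missing idea is to use uniqueness of maps out of the initial point $i$ to $\bar{x}$ for \emph{every} point $x$, not just the closed one:
\begin{itemize}
\item Each point $x'$ in the fibre of $\Spec(A^{\mathrm{sh}})\to\Spec(A)$ over $x$, together with each $\kappa(x)$-embedding $\kappa(x')\to\kappa(x)^{\mathrm{sep}}$, gives a map $i\to\bar{x}$. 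Uniqueness makes the map injective with purely inseparable residue extensions, hence universally injective.
\item Being faithfully flat and quasi-compact, it is then a universal homeomorphism (\cref{prop:initalconditions}).
\item Henselianity follows by transporting the idempotent criterion $\mathrm{Idem}(R)=\mathrm{Idem}(R/mR)$, for finite $A$-algebras $R$, along the homeomorphisms $\Spec(R\otimes_A A^{\mathrm{sh}})\to\Spec(R)$ and $\Spec(R/mR\otimes_A A^{\mathrm{sh}})\to\Spec(R/mR)$ (\cref{prop:localring_homeo}, or \cite[Lemma 0F6V]{stacksproject}).
\end{itemize}

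In a), your free-action argument does show that all residue fields are separably closed. You should justify freeness: an automorphism of $\mathcal{O}^{\mathrm{sh}}_{X,x}$ over $\mathcal{O}_{X,x}$ that fixes a point $\eta'$ and induces the identity on $\kappa(\eta')$ acts trivially on $\mathcal{O}^{\mathrm{sh}}_{X,x}/\eta'$, hence on $\kappa(x)^{\mathrm{sep}}$, hence is the identity.

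The step that fails is ``$\mathcal{O}_{X,x}$ must be strictly henselian, otherwise distinct points of $X_{(x)}$ over $\eta$ give distinct maps''. Non-henselianity does not by itself produce several points over $\eta$. For $A=\overline{\mathbb{F}}_p[t]_{(t)}$ the henselization is a DVR, so exactly one point lies over the generic point, yet $A$ is not henselian. There the extra maps come from embeddings of $\Frac(A^{h})$, not from extra points.

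What you need is the implication ``one point of $X_{(x)}$ over $\eta$ and $\kappa(\eta)$ separably closed $\Rightarrow$ $\mathcal{O}_{X,x}$ henselian'', and this is the real content of the direction. The paper obtains it in two steps:
\begin{itemize}
\item The flat map $X_{(x)}\to\Spec(\mathcal{O}_{X,x})$ sends all maximal points to $\eta$, so $X_{(x)}$ is irreducible and $X$ is geometrically unibranch (\cref{cor:henselirred}).
\item Schr\"oer's result then says that an irreducible unibranch scheme with separably closed $\kappa(\eta)$ is everywhere strictly local (\cref{prop:eslgeomuni}).
\end{itemize}
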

The notion of an \textit{everywhere strictly local} scheme is defined as follows.
\begin{definition}\label[definition]{def:everywhere_strictly:local}
Let $X$ be a scheme. 
Then $X$ is called \textit{everywhere strictly local} if all local rings $\mathcal{O}_{X,\xbar}$ at geometric points $\xbar\rightarrow X$ are strictly henselian.
\end{definition}
\subsubsection{Classification of Gal(X) having a terminal object}
We start with showing the following variant of part a) of \cref{thm:Gal_ter_ini}.
\begin{theorem}\label[theorem]{thm:Galterminal}
Let $X$ be a qcqs scheme.
Then the following are equivalent:
\begin{itemize}
\item[(i)] The scheme $X$ is irreducible and everywhere strictly local.
\item[(ii)] The condensed category $\Gal(X)$ has a terminal object.
\item[(iii)] The category of points $\Pt(X_{\et})$ has a terminal object.
\end{itemize}
\end{theorem}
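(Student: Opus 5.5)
We prove the cycle of implications (ii) $\Rightarrow$ (iii) $\Rightarrow$ (i) $\Rightarrow$ (ii).

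\textbf{(ii) $\Rightarrow$ (iii).} This is immediate. A terminal object of the condensed category $\Gal(X)$ is by definition a terminal object levelwise. Evaluating at $S=\ast$ and using $\Gal(X)(\ast)\simeq\Pt(X_{\et})$ gives a terminal object of $\Pt(X_{\et})$.

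\textbf{(iii) $\Rightarrow$ (i).} A terminal object is in particular weakly terminal, so \cref{lem:weaklyobjects} shows that $X$ is irreducible, with generic point $\eta$. The terminal object $t$ must be a geometric point over $\eta$: by \cref{rem:Pointsetaletopos2}, maps $\xbar\to\ybar$ exist only when $x_0\leq y_0$, and a point of a non-generic stratum receives no map from $\bar\eta$. Since $\Hom(t,t)$ is trivial, $\Gal_{\kappa(\eta)}=1$, so $\kappa(\eta)$ is separably closed.

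Now let $\xbar$ be any geometric point. The morphisms $\xbar\to\bar\eta$ are the lifts of $\bar\eta$ to the strict localization $X_{(\xbar)}=\Spec(\mathcal{O}^{\mathrm{sh}}_{X,\xbar})$. These correspond to points of the generic fibre of $X_{(\xbar)}\to X$, each equipped with an embedding of residue fields over $\kappa(\eta)$. Uniqueness of this morphism should force the following.
\begin{itemize}
\item[(1)] The preimage of $\eta$ in $X_{(\xbar)}$ is a single point.
\item[(2)] That point has residue field equal to $\kappa(\eta)$, because a proper separable extension would give several embeddings into a separable closure.
\end{itemize}
Together these should force $\mathcal{O}_{X,x}\to\mathcal{O}^{\mathrm{sh}}_{X,\xbar}$ to be an isomorphism on the level of the relevant points, so that $\mathcal{O}_{X,x}$ is strictly henselian. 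I would argue this via finite étale covers of the henselization: a nontrivial such cover, or a separable residue extension, produces either distinct generic points or distinct embeddings, and hence distinct maps $\xbar\to\bar\eta$.

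This step is the main obstacle. The local ring $\mathcal{O}_{X,x}$ need not be normal or a domain, so I would first pass to the reduced scheme, using that $\Gal$ only sees the étale topos. I would then use that $\mathcal{O}^{\mathrm{sh}}$ is a filtered colimit of étale neighbourhoods $U\to X$. If some such $U$ has generic fibre of degree $\geq2$ over $\eta$, then, since $\kappa(\eta)$ is separably closed, the fibre consists of several points, and these give distinct maps. The alternative route is to compare $X_{(\xbar)}$ with $\Spec\mathcal{O}_{X,x}$ directly.

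\textbf{(i) $\Rightarrow$ (ii).} Assume $X$ is irreducible and everywhere strictly local, with generic point $\eta$. Then $\kappa(\eta)$ is separably closed, since the local ring at $\eta$ is strictly henselian.

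For $S\in\Extr$, take the candidate terminal object of $\Gal(X)(S)$ to be the constant family at $\bar\eta$. In terms of \cref{prop:Gal_wcontrrings}, after reducing to an affine open (harmless, since the generic point lies in every nonempty open), this is $\Spec(\mathcal{O}_{X,\eta})_S$ from \cref{ex:stricthensel_wstrictly}. Equivalently, it is the coherent functor $\Sh(\Et{X},\Set)\to\Sh(S,\Set)$ given by pulling back along $\eta$ and taking the constant sheaf.

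To show it is terminal, note that everywhere strict locality gives, by \cite[Corollary 2.5]{MR3649361}, that the étale and Zariski topoi of $X$ agree. Hence $\Gal(X)(S)$ is the category of coherent algebraic morphisms $\Sh(|X|)\to\Sh(S)$, that is, the category of continuous families of points of the spectral space $|X|$ parametrized by $S$, with maps given by specialization. This is a poset-valued category, and $\eta$ is the maximum, since every point specializes from it. The constant family at $\eta$ is therefore terminal levelwise. The remaining point to verify is that the constant $\eta$-family is coherent and receives a unique map from each family, which follows because $\Sh(S)$-points of a locale form a poset and generization to $\eta$ is continuous, $\eta$ lying in every open.

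Finally, condensed contractibility follows from the earlier corollary on terminal objects.
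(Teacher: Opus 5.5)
Your implications (ii)$\Rightarrow$(iii) and (i)$\Rightarrow$(ii) are fine and agree with the paper; the latter is exactly \cref{prop:everwhstrictlylocal} together with \cref{cor:eslcase}. The gap is in (iii)$\Rightarrow$(i), at the step you flag yourself.

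You correctly obtain that $\kappa(\eta)$ is separably closed and that $\Spec(\mathcal{O}^{\mathrm{sh}}_{X,\bar x})$ has exactly one point over $\eta$. Your condition (2) is then automatic: residue fields of the ind-\'etale algebra $\mathcal{O}^{\mathrm{sh}}_{X,\bar x}$ at points over $\eta$ are separable algebraic over the separably closed field $\kappa(\eta)$. So $\Hom(\bar x,\bar\eta)$ is simply in bijection with that fibre. But getting from the one-point condition to ``$\mathcal{O}_{X,x}$ is strictly henselian'' is the entire content of the implication, and you only assert that it should follow.

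The mechanism you sketch also does not work as stated. A point of the generic fibre of an \'etale neighbourhood $(U,u)$ of $\bar x$ gives a morphism $\bar x\to\bar\eta$ only if it generalizes $u$, i.e.\ lies in $\Spec(\mathcal{O}_{U,u})$. For instance, if $X$ satisfies (i), then $U=X\sqcup X$ is an \'etale neighbourhood whose generic fibre has degree $2$, yet $\Hom(\bar x,\bar\eta)$ is a singleton. Once you restrict to $\Spec(\mathcal{O}_{U,u})$, counting points over $\eta$ merely restates the one-point condition, since these points are images of the points of $\Spec(\mathcal{O}^{\mathrm{sh}}_{X,\bar x})$ over $\eta$ under a faithfully flat map. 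Turning that condition into henselianity of $\mathcal{O}_{X,x}$ and separable closedness of $\kappa(x)$, for a local ring that need not be normal, is a genuine piece of commutative algebra that your proposal does not supply.

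The paper fills this in two steps.
\begin{enumerate}
\item (\cref{cor:henselirred}) The map $\Spec(\mathcal{O}^{\mathrm{sh}}_{X,\bar x})\to\Spec(\mathcal{O}_{X,x})$ is flat, hence generizing, and its target is irreducible. A flat morphism to an irreducible scheme sends every maximal point of the source to the generic point. So the one-point condition forces $\Spec(\mathcal{O}^{\mathrm{sh}}_{X,\bar x})$ to have a unique maximal point, i.e.\ to be irreducible. This says exactly that $X$ is geometrically unibranch at $x$ (Stacks Project, Tag 06DM).
\item (\cref{prop:eslgeomuni}, based on \cite[Proposition 2.6]{MR3649361}) Schr\"oer's criterion: an irreducible scheme which is geometrically unibranch and has separably closed function field is everywhere strictly local.
\end{enumerate}
You need these two ingredients, or an equivalent direct argument via the normalization of $(\mathcal{O}_{X,x})_{\mathrm{red}}$ in $\kappa(\eta)$, to complete (iii)$\Rightarrow$(i).
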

\begin{remark}\label[remark]{rem:cofil_term_ini}
Note that the equivalence of part (ii) and (iii) gives the missing implication from \cref{prop:filt_term_ini}
\begin{center}
$\Gal(X)$ is cofiltered $\implies$ $\Gal(X)$ has a terminal object.
\end{center}
Indeed, $\Gal(X)$ being cofiltered implies $\Pt(X_{\et})$ to be cofiltered which was shown to be equivalent to $\Pt(X_{\et})$ having a terminal object in \cref{subsec:underlying_cats}.
\end{remark}
Before stating a complete proof of the theorem, we give an idea of the arguments and outsource different results needed.
Justification for the claim comes from the following: 
\begin{example}
Let $X$ be an absolutely integrally closed scheme, i.e., an integral normal scheme whose function field is algebraically closed.
In other words, $X$ is the strict normalization $Y^{(\ybar)}$ of a scheme $Y$ at a geometric point $\ybar\rightarrow Y$. 
In \cite[Corollary~12.4.5]{Exodromy}, it is claimed as an equivalence of profinite posets and proven in \cite[Corollary 6.17]{The_condensed_homotopy_type} for condensed categories that
$\Gal(X)=\Gal(Y)_{/\ybar}.$
Particularly, the Galois category $\Gal(X)$ has a terminal object corresponding to the geometric point $\ybar$.
\end{example}
As any absolute integrally closed scheme is everywhere strictly local \cite[Proposition 2.6]{MR3649361}, the final conclusion in the last example is a special case of what we are aiming to prove.
For a general everywhere strictly local scheme, we can fall back to the Zariski space.
\begin{proposition}\label[proposition]{prop:everwhstrictlylocal}
Let $X$ be an everywhere strictly local qcqs scheme.
Then for every $S\in \Extr$ we have an identification of categories
$$\Gal(X)(S)=\Hom_{qc}(S,|X|)\comma$$
where $\Hom_{qc}(S,|X|)$ is the set of quasicompact continuous maps $S \rightarrow |X|$ from $S$ to the underlying Zariski space of $X$ which inherits the structure of a category through the poset structure of $ |X|$.\\ More precisely, the \textit{poset} of quasicompact maps $f \colon S \to |X| $ is ordered by \textit{pointwise specialization:} it is $ f \leq g $ if and only if for all $ s \in S $, we have $ f(s) \in \overline{\{g(s)\}} $.
In particlular, $ \Gal(X)(\ast)$  recovers the specialization poset  $ \Zpos{X}$ of $ |X| $. 
\end{proposition}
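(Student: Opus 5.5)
The plan is to compare the étale topos of $X$ with its Zariski topos, and then identify coherent algebraic morphisms into $\Sh(S,\Set)$ with quasicompact continuous maps $S\to|X|$.

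\textbf{Step 1: reduce to the Zariski topos.} Since $X$ is everywhere strictly local, \cite[Corollary 2.5]{MR3649361} (already used in the proof of \cref{lem:wstrictly_coherentfunctor}) shows that the morphism of sites $\Et{X}\to X_{\zar}$ induces an equivalence
$$\Sh(\Et{X},\Set)\simeq \Sh(|X|,\Set).$$
Under this equivalence the constructible étale sheaves correspond to the Zariski-constructible sheaves on the spectral space $|X|$. Therefore
$$\Gal(X)(S)\simeq \Fun^{*,\coh}(\Sh(|X|,\Set),\Sh(S,\Set)),$$
the category of coherent geometric morphisms $\Sh(S)\to\Sh(|X|)$. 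Concretely, these are geometric morphisms whose pullback sends constructible sheaves to locally constant constructible ones.

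\textbf{Step 2: identify geometric morphisms between spatial topoi with continuous maps.} Both $S$ and $|X|$ are sober: $S$ is Hausdorff, and $|X|$ is spectral. So geometric morphisms $\Sh(S)\to\Sh(|X|)$ correspond to continuous maps $S\to|X|$, and natural transformations $f^*\Rightarrow g^*$ form a (0-truncated) poset. I will check that this poset is exactly pointwise specialization, and then that coherence corresponds to quasicompactness.

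\emph{Coherence.} The coherent objects of $\Sh(|X|)$ are generated by the quasicompact opens $U\subset|X|$. The pullback $f^*h_U=h_{f^{-1}U}$ is coherent in $\Sh(S)$ exactly when $f^{-1}(U)$ is quasicompact, i.e.\ clopen in $S$. Hence coherent morphisms correspond to maps with $f^{-1}(U)$ compact for every quasicompact open $U$, which is the condition that $f$ is quasicompact. This is also \cite[Proposition 9.5.7]{Exodromy}, used earlier for $S=\ast$.

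\emph{Morphisms.} A natural transformation $f^*\Rightarrow g^*$ must be compatible with the subobjects $h_U\subset 1$. It therefore exists, and is then unique, iff $f^{-1}(U)\subseteq g^{-1}(U)$ for every quasicompact open $U$. This says $f(s)\in U\Rightarrow g(s)\in U$ for all such $U$, which for a spectral space means $f(s)\in\overline{\{g(s)\}}$. So $\Gal(X)(S)$ is the poset described in the statement, with $f\le g$ iff $f(s)$ specializes from $g(s)$ for all $s$. The direction of the arrow should be matched against the convention of \cref{ex:Gal(X)_is_a_stratified_anima}. Taking $S=\ast$ gives back $\Zpos{X}$.

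\textbf{Expected main obstacle: the morphism analysis.} Showing that natural transformations between pullbacks along maps of sober spaces are uniquely determined by their effect on opens needs care. Their components on arbitrary sheaves are not determined a priori. I would argue via \eqref{eq:pre_topoi_morphisms}: a natural transformation is determined on the generating pretopos of coherent objects. Its components on subterminal objects are forced. For a general coherent object, I would present it by quasicompact opens using the local sections of a sheaf on the spectral space $|X|$. Compatibility with effective epimorphisms from disjoint unions of such $h_U$ then pins down each component.
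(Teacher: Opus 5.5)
Your proposal is correct and follows essentially the same route as the paper: Schr\"oer's \cite[Corollary 2.5]{MR3649361} to pass to the Zariski topos of $|X|$, sobriety of $|X|$ to identify geometric morphisms with continuous maps, the order read off from the inclusions $f^{-1}(U)\subseteq g^{-1}(U)$ (your direction is the right one), and coherence matched with quasicompactness, for which the paper simply cites \cite[Example 2.18]{Haine:1-localic}. The one real difference is that the paper gets the $2$-cells wholesale from the locale--topos comparison \cite[Proposition C.1.4.5]{MR1953060}, which also covers the existence half of your morphism analysis; your sketch only shows that the components are forced. By hand, existence follows from the map $\colim_{U\supseteq f(V)}F(U)\to\colim_{U\supseteq g(V)}F(U)$ on presheaf inverse images, which makes sense because $f(V)\subseteq U$ implies $g(V)\subseteq U$, followed by sheafification.
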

\begin{proof}
For $X$ an everywhere strictly local scheme, the \'{e}tale and the Zariski topoi of $X$ agree \cite[Corollary 2.5]{MR3649361}.
Using this, we are reduced to algebraic morphisms between sheaf topoi of spectral topological spaces as 
$$\Gal(X)(S)\simeq\Fun^{*, \coh}(\Sh(|X|,\Set),\Sh(S,\Set))\period$$
Geometric morphisms of sheaf topoi on topological spaces are the same as morphisms of the underlying locales of opens of these spaces.
As proven in \cite[Proposition C.1.4.5]{MR1953060}, this refines to an equivalence of categories
\begin{align}\label{locale1}
\Fun_*(\Sh(S,\Set), \Sh(|X|,\Set))\simeq\Hom_{\mathrm{Loc}}(\mathrm{Ouv}(S), \mathrm{Ouv}(|X|))\comma
\end{align}
where a morphism $f\colon \mathrm{Ouv}(S) \rightarrow \mathrm{Ouv}(|X|)$ of the locales of open subsets corresponds to a morphism $$f^{-1}\colon \mathrm{Ouv}(|X|) \rightarrow \mathrm{Ouv}(S)$$ of frames. Then the set of morphisms on the right hand side in (\ref{locale1}) obtains the structure of a poset by the partial order of inclusions on the frames.
Indeed, we say that there exists a morphism $f \rightarrow g$ between two maps $f,g\colon \mathrm{Ouv}(S)\rightarrow \mathrm{Ouv}(|X|)$ if and only if for all $U\in  \mathrm{Ouv}(|X|)$ open subset of $|X|$ we have $f^{-1}(U)\subset g^{-1}(U)$.
Furthermore, it is known by \cite[Chapter IX, Section 3, Theorem 1]{MR1300636} that the functor $$\mathrm{Ouv}\colon \Top \rightarrow \mathrm{Loc}, \ X \mapsto \mathrm{Ouv}(X)$$ sending a topological space to its locale of opens has a right adjoint $\Pt\colon \mathrm{Loc}\rightarrow \Top$ and that the unit of the adjunction is a homeomorphism exactly on sober topological spaces \cite[Chapter IX, Section 3, Proposition 2]{MR1300636}.
As the underlying space of a scheme is sober, the adjunction gives bijective correspondences
\begin{align}\label{locale2}
\Hom_{\mathrm{Loc}}(\mathrm{Ouv}(S), \mathrm{Ouv}(|X|))&\simeq\Hom_{\Top}(S, \Pt(\mathrm{Ouv}(|X|))) \nonumber \\
&\simeq\Hom_{\Top}(S, |X|)\period
\end{align}
These extend to equivalences of categories by taking over the poset structure from the left hand side.
In more detail, there exists a morphism between two continuous maps $f, g\colon S\rightarrow |X|$ 
if and only if for all open subsets $U\subset |X|$ it is $f^{-1}(U) \subset g^{-1}(U)$. If we now take the opposite of this poset, we obtain a poset structure on $\Hom_{\Top}(S, |X|)$ given as follows: We have a morphism $f \rightarrow g$ or an order $f\leq g$ between two continuous maps $f, g\colon S\rightarrow |X|$ if and only if for all $s\in S$ it is $ f(s) \in \overline{\{g(s)\}} $. Indeed, the condition  $ f(s) \in \overline{\{g(s)\}} $ for all $s \in S$ is equivalent to the condition $g^{-1}(U) \subset f^{-1}(U)$ for all opens $U\subset |X|$.
Comparing the results of (\ref{locale1}) and (\ref{locale2}) with the claim in the proposition, it is only left to relate coherent geometric morphisms with quasi-compact maps of topological spaces.
That these two can in our situation be identified with each other is explained in \cite[Example 2.18]{Haine:1-localic}. Thus we finally end up with 
$$\Gal(X)(S)=\Hom_{qc}(S,|X|)\comma$$
where the poset structure on the right is inherited by the poset structure of $ \Zpos{X}$.
\end{proof}
\begin{corollary}
Let $X$ be an everywhere strictly local qcqs scheme.
Then its Galois category $\Gal(X)$ is contractible as a condensed category if and only if the  Zariski poset $\Zpos{X}$ is contractible as a poset.
\end{corollary}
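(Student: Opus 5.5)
The plan is to read off $\Gal(X)$ from \cref{prop:everwhstrictlylocal} and to analyse the condensed poset $S \mapsto \Hom_{qc}(S,|X|)$ directly. For each $S\in\Extr$, $\Gal(X)(S)$ is the poset of quasicompact continuous maps $S\to|X|$, ordered by pointwise specialization. So $\Bcond\Gal(X)(S)$ is the classifying anima of this poset, and we must show that this poset is contractible for every $S$ exactly when $\Zpos{X}$ is contractible.

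The direction ``$\Gal(X)$ contractible $\Rightarrow$ $\Zpos{X}$ contractible'' is immediate. Evaluate at $S=\ast$: the last sentence of \cref{prop:everwhstrictlylocal} identifies $\Gal(X)(\ast)$ with $\Zpos{X}$, so $\Bup\Zpos{X}=\Bcond\Gal(X)(\ast)=\ast$.

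For the converse, suppose $\Bup\Zpos{X}=\ast$ and fix $S\in\Extr$. I would realise $\Gal(X)$ as an object of $\Cat(\ProAnifin)$, or more concretely as a profinite poset. A qcqs scheme has a spectral space $|X|=\lim_j P_j$, a cofiltered limit of finite posets (finite $T_0$ spaces) along quasicompact maps. Quasicompact maps $S\to|X|$ from a profinite $S=\lim_i S_i$ are then $\lim_j\colim_i \Fun(S_i,P_j)$, with $P_j$ viewed as a poset. This is the same as saying that $\Gal(X)$ is the condensed poset attached to the profinite poset $|X|$, the picture behind \cite[Recollection 3.32]{The_condensed_homotopy_type}. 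For a finite poset $P$, the poset $\Fun(S_i,P)=P^{S_i}$ has classifying anima $(\Bup P)^{S_i}$, because $\Bup$ preserves finite products. This should give
\[
\Bcond\Gal(X)(S)\simeq \Map_{\CondAni}\bigl(S,\ \Bcond\Gal(X)\bigr),
\]
and the condensed anima $\Bcond\Gal(X)$ should be computed as the image of the pro-anima $\{\Bup P_j\}_j$. Alternatively, I would use that $\Bcond$ commutes with the relevant limits for categories in the image of $\iota$.

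The hardest step is passing from contractibility of $\Bup\Zpos{X}$ to contractibility of the pro-system $\{\Bup P_j\}$, or at least of the resulting condensed anima. Here I would use the stratified/d\'ecollage formula \eqref{eq:classifying_anima_via_decollage} levelwise in $S$, together with the fact that $\Gal(X)\in\Cat(\ProAnifin)$ is conservative over the materialization $(-)(\ast)$ (\cite[Observation 6.5]{The_condensed_homotopy_type}). The aim is to show that the map $\Bcond\Gal(X)\to\ast$ is detected on underlying anima. Concretely, I would check that $\Bcond$ of a condensed poset coming from a profinite poset is again ``profinite'', so that it lies in the image of the fully faithful right adjoint. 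Contractibility would then follow from contractibility of the underlying anima $\Bup\Zpos{X}$, via conservativity of $(-)(\ast)$ on that image. If conservativity does not extend through $\Bcond$, the fallback is to argue levelwise. For each fixed $S$, I would construct an explicit contraction of $\Hom_{qc}(S,|X|)$ by transporting a contraction of $\Zpos{X}$ pointwise in $s\in S$, using the extremal disconnectedness of $S$ to choose the required zigzags continuously.
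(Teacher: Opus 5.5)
Your ``only if'' direction is correct: evaluate at $S=\ast$ and use $\Gal(X)(\ast)=\Zpos{X}$. It is also essentially all the paper's one-line proof contains. The paper calls the equivalence a direct consequence of \cref{prop:everwhstrictlylocal}, so it tacitly identifies the condensed poset $S\mapsto\Hom_{qc}(S,|X|)$ with $\Zpos{X}$. Once you read $\Zpos{X}$ as an abstract poset, as you do, the converse is the entire content, and none of your three routes establishes it.

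\textbf{Route 1 (pro-anima and conservativity).} $\Bup$ preserves finite products and filtered colimits, but not the cofiltered limit in $\Hom_{qc}(S,|X|)=\lim_j\colim_i P_j^{S_i}$. So $\Bcond\Gal(X)$ is not the image of the pro-anima $\{\Bup P_j\}_j$: its underlying anima is $\Bup\Zpos{X}$, not $\lim_j\Bup P_j$. In general it does not lie in the image of $\Cat(\ProAnifin)$ at all (see the example below). Hence the conservativity of evaluation at $\ast$ from \cite[Observation 6.5]{The_condensed_homotopy_type} is not available.

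\textbf{Route 2 (pointwise transport).} Transporting a contraction pointwise needs zigzag lengths bounded uniformly in $s\in S$. Extremal disconnectedness does let you choose length-$m$ zigzags continuously, provided $f(S)$ lies in the image of the compact space of length-$m$ zigzags; that image is a closed subset of $|X|$ for the constructible topology. But these images form an increasing union of closed sets, and nothing forces them to stabilize on the compact set $f(S)$.

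\textbf{The key step is false.} Let $T=\{c_n,g_n : n\geq 0\}\cup\{\infty\}$ be the one-point compactification of the discrete set $\{c_n,g_n\}$, ordered by $c_n\leq g_n$, $c_{n+1}\leq g_n$ and $\infty\leq g_0$.
\begin{itemize}
\item $T$ is a Priestley space, hence a spectral space with this specialization order. Its comparability graph is a tree, so $\Bup T\simeq\ast$.
\item Collapsing the clopen sets $\{c_m,g_m : m\geq n\}\cup\{\infty\}$ exhibits $T=\lim_n P_n$ with $\Bup P_n\simeq S^1$ for $n\geq 2$: the path $g_0,c_1,\dots,g_{n-1}$ closes up through the collapsed point. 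So $\{\Bup P_n\}$ is essentially the constant pro-anima $S^1$.
\item Take $S=\beta\NN\in\Extr$ and let $f$ be the continuous extension of $k\mapsto c_k$, sending $\beta\NN\setminus\NN$ to $\infty$. Then $f$ is quasicompact continuous.
\item A zigzag of length $m$ between $f$ and the constant map at $c_0$ in $\Hom_{qc}(\beta\NN,T)$ would give zigzags of length $m$ from $c_k$ to $c_0$ in $T$ for every $k$. But their distance in $T$ is $2k$, so $\Hom_{qc}(\beta\NN,T)$ is not even connected.
\end{itemize}

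\textbf{Realizing $T$ by a scheme.} Such a $T$ is the underlying space of an everywhere strictly local affine scheme. One way is a cofiltered limit of pinchings, at $\kappa$-rational closed points, of spectra of semilocal Pr\"ufer domains with algebraically closed fraction field and residue fields $\kappa=\overline{\kappa}$. For this $X$, $\Zpos{X}$ is contractible while $\Gal(X)$ is not.

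So no argument of the kind you sketch can succeed. The ``if'' direction holds in two situations:
\begin{itemize}
\item if contractibility of $\Zpos{X}$ is understood for its profinite (condensed) structure, which is what the paper's proof implicitly does and which reduces the statement to \cref{prop:everwhstrictlylocal};
\item under extra hypotheses such as $|X|$ finite, where your formula $\Bup\Hom_{qc}(S,P)\simeq\colim_i(\Bup P)^{S_i}$ does give the result.
\end{itemize}
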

\begin{proof}
This is a direct corollary of the proposition by realizing that the categorical structure of $\Gal(X)$ is exactly given by the poset structure of $ \Zpos{X}$.
\end{proof}
\cref{prop:everwhstrictlylocal} shows that for an everywhere strictly local scheme $X$, a maximal element of $ \Zpos{X}$, which corresponds by \cref{lem:XZarconditions} to a unique generic point of $X$, carries over to a terminal object of the whole Galois category $\Gal(X)$.
Similar observations can be made for local rings and initial objects. More precisely, we can state: 
\begin{corollary}\label[corollary]{cor:eslcase}
Let $X$ be an everywhere strictly local qcqs scheme.
Then, the Galois category $\Gal(X)$ has a terminal (respectively, an initial) object if and only if one of the equivalent conditions of \cref{lem:XZarconditions} is satisfied.
\end{corollary}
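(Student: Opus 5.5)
By \cref{prop:everwhstrictlylocal}, for every $S\in \Extr$ we identify $\Gal(X)(S)$ with the poset $\Hom_{qc}(S,|X|)$ under pointwise specialization. So it suffices to show: $\Hom_{qc}(S,|X|)$ has a maximal (respectively, minimal) element for all $S\in\Extr$ if and only if $\Zpos{X}$ has a maximal (respectively, minimal) element. By \cref{lem:XZarconditions}, the latter is equivalent to $X$ being irreducible (respectively, local).

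For the direction ``only if'', evaluate at $S=\ast$. The last sentence of \cref{prop:everwhstrictlylocal} gives $\Gal(X)(\ast)=\Zpos{X}$, so a terminal (initial) object of $\Gal(X)$ yields a maximal (minimal) element of $\Zpos{X}$. Alternatively, one can invoke \cref{cor:weakly_conditiions} directly.

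For the direction ``if'', let $\eta$ be the generic point (respectively, $x$ the unique closed point). For each $S$, take the constant map $c\colon S\to |X|$ with value $\eta$ (respectively, $x$). It is continuous. It is also quasi-compact: the preimage of any open $U$ is either $\emptyset$ or $S$, and $S$ is compact, so every preimage of a quasi-compact open is quasi-compact. For any $f\in \Hom_{qc}(S,|X|)$ we have $f(s)\in\overline{\{\eta\}}=|X|$ for all $s$, so $f\le c$. In the local case, $x\in\overline{\{f(s)\}}$ holds because every point of a local scheme specializes to the closed point, so $c\le f$. Since the category is a poset, these morphisms are unique, and $c$ is terminal (respectively, initial) in $\Gal(X)(S)$.

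We also need compatibility across $S$, since terminal objects are required levelwise. Here nothing extra is needed: by the definition in this section, levelwise existence suffices. The constant maps are in any case stable under pullback along maps $S'\to S$.

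The main point to check is quasi-compactness of the constant map in the sense used in \cref{prop:everwhstrictlylocal}, i.e.\ via \cite[Example 2.18]{Haine:1-localic}. I expect it to hold trivially as argued above, so there is no real obstacle; the proof is a direct combination of \cref{prop:everwhstrictlylocal} and \cref{lem:XZarconditions}.
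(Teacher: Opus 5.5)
Your proposal is correct and matches the paper's proof: both use \cref{prop:everwhstrictlylocal} to reduce to the pointwise-specialization poset $\Hom_{qc}(S,|X|)$, note that constant maps are quasi-compact and continuous, and take the constant map at the generic (respectively, closed) point as the terminal (respectively, initial) object, independently of $S$. For the ``only if'' direction you evaluate at $S=\ast$, while the paper compares a terminal object against all constant maps; this is only a small shortcut and not a different route.
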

\begin{proof}
The equivalence with part a) of \cref{lem:XZarconditions} follows from  \cref{prop:everwhstrictlylocal}:
A morphism $t\colon K\rightarrow |X|$ is a terminal (respectively, $i\colon K\rightarrow |X|$ an initial) object of $\Gal(X)(S)$ if and only if $g(k)\leq t(k)$ (respectively, $i(k)\leq g(k)$) for all $g\in \Gal(X)(S)$ and $k\in K$.
Any constant map $c_{x}\colon K \rightarrow |X|, \ k\mapsto x$ of some $x\in |X|$ is contained in $\Gal(X)(S)$ as it is quasi-compact continuous.
Thus the terminal (respectively, initial) object of $\Gal(X)(S)$ corresponds to a constant map $c_{\mathrm{max}}$ (respectively, $c_{\mathrm{min}}$) of a maximal (respectively, minimal) element of $ \Zpos{X}$.
The choice of the constant map is independent of $K$.
\end{proof}
\begin{remark}
Any local and everywhere strictly local scheme is strictly henselian (additionally satisfying the property that not only the ring itself but also the local rings at all prime ideals are strictly henselian).
In this case, it is already known that the condensed homotopy type is trivial, compare \cref{ex:strictly_henselian_trivial}. 
\end{remark}
We will examine the whole situation for initial objects in \cref{thm:Galinitial} and stick here to the case of terminal objects.
In \cref{cor:eslcase}, we have seen that for an everywhere strictly local qcqs scheme $X$, irreducibility ensures the existence of a terminal object of $\Gal(X)$ proving the implication (i) to (ii) in  \cref{thm:Galterminal}.
So lets turn to the other direction.
We can extract the following more refined constraints on schemes $X$ whose Galois category $\Gal(X)$, and thus also the underlying category of points $\Gal(X)(\ast)=\Pt(X_{\et})$, has a terminal object.
\begin{proposition}\label[proposition]{prop:constraintsterm}
If the category of points $\Pt(X_{\acute{e}t})$ has a terminal object, then $X$ is an irreducible scheme satisfying the following additional properties:
\begin{itemize}
\item[a)] The residue field $\kappa(\eta)$ of the generic point $\eta\in X$ is separably closed.
\item[b)] For every local ring $\mathcal{O}_{X,x}$ at a point $x\in X$, the fiber of the generic point under the canonical map $\Spec(\mathcal{O}_{X,x}^{\mathrm{sh}}) \rightarrow \Spec(\mathcal{O}_{X,x})$ consists of only one point.
\end{itemize} 
\end{proposition}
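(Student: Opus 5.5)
Let $t\in\Pt(X_{\et})$ be a terminal object. First, irreducibility follows from \cref{lem:weaklyobjects}, since a terminal object is in particular weakly terminal. By \cref{rem:Pointsetaletopos2}, every object maps to $t$ only if its underlying point specializes from the image of $t$. Hence the image of $t$ in $|X|$ is the generic point $\eta$, and so $t=\etabar$ is a geometric point over $\eta$ with residue field $\kappa(\eta)^{\mathrm{sep}}$.

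For a), we use that a terminal object has only the identity as endomorphism, so $\Hom(\etabar,\etabar)=\{\mathrm{id}\}$. By \cref{rem:Pointsetaletopos2}(2), this endomorphism set is the absolute Galois group $\Gal(\kappa(\eta)^{\mathrm{sep}}/\kappa(\eta))$. It is therefore trivial, so $\kappa(\eta)$ is separably closed.

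For b), fix $x\in X$ and a geometric point $\xbar$ over it. By \cref{rem:Pointsetaletopos}, morphisms $\xbar\to\etabar$ in $\Pt(X_{\et})$ are lifts of $\etabar\colon\Spec(\kappa(\eta)^{\mathrm{sep}})\to X$ through $X_{(x)}=\Spec(\mathcal{O}_{X,x}^{\mathrm{sh}})\to X$. Since $\kappa(\eta)$ is separably closed by a), such lifts correspond to points $\xi$ of $X_{(x)}$ lying over $\eta$, together with a $\kappa(\eta)$-embedding $\kappa(\xi)\hookrightarrow\kappa(\eta)^{\mathrm{sep}}=\kappa(\eta)$. Terminality says there is exactly one such lift. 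It therefore suffices to show the following: for every point $\xi$ of the generic fiber of $X_{(x)}\to\Spec(\mathcal{O}_{X,x})$, the extension $\kappa(\xi)/\kappa(\eta)$ is trivial, hence admits exactly one embedding. Granting this, distinct points of the fiber give distinct lifts, so uniqueness of the morphism forces the fiber to consist of a single point.

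To justify the field claim, write $\mathcal{O}_{X,x}^{\mathrm{sh}}$ as a filtered colimit of étale $\mathcal{O}_{X,x}$-algebras. The generic fiber $\Spec(\mathcal{O}_{X,x}^{\mathrm{sh}}\otimes\kappa(\eta))$ is then a filtered limit of schemes étale over $\Spec\kappa(\eta)$. Each residue field $\kappa(\xi)$ is thus a filtered colimit of finite separable extensions of $\kappa(\eta)$, i.e. an algebraic separable extension. Since $\kappa(\eta)$ is separably closed, $\kappa(\xi)=\kappa(\eta)$. Two points in the fiber would therefore give two distinct morphisms $\xbar\to\etabar$, contradicting terminality. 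The fiber is nonempty because $\mathcal{O}_{X,x}\to\mathcal{O}_{X,x}^{\mathrm{sh}}$ is faithfully flat; this also follows from the existence of a morphism into a terminal object. Hence the fiber consists of exactly one point.

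The main obstacle is the precise identification of $\Hom(\xbar,\etabar)$ with pairs (point $\xi$, embedding of $\kappa(\xi)$) via \cref{rem:Pointsetaletopos}. One has to check that a morphism $\Spec(\kappa(\eta))\to X_{(x)}$ over $X$ is exactly this data. This is the standard description of morphisms from the spectrum of a field, so the step is expected to be routine.
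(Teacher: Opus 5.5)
Your proposal is correct and follows the paper's proof. Terminality identifies $t$ as a geometric point over the generic point, triviality of $\mathrm{Hom}(t,t)=\mathrm{Gal}(\kappa(\eta)^{\mathrm{sep}}/\kappa(\eta))$ gives a), and each point of the generic fiber of $\mathrm{Spec}(\mathcal{O}_{X,x}^{\mathrm{sh}})\to\mathrm{Spec}(\mathcal{O}_{X,x})$ yields its own morphism $\bar{x}\to t$, which gives b). Your ind-\'etale argument that these residue fields are separable algebraic over $\kappa(\eta)$ makes explicit why every fiber point actually yields a morphism, a step the paper leaves implicit; distinctness of the morphisms is automatic from the differing image points and does not depend on that argument.
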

\begin{proof}
We already know that existence of a terminal object for $\Pt(X_{\et})$ forces the scheme $X$ to be irreducible. The terminal object $t$ corresponds to its unique generic point, \cref{lem:weaklyobjects}.
By assumption, there exists a unique morphism $x\rightarrow t$ for every $x \in \Pt(X_{\et})$.
Particularly, this has to be satisfied for $t$ itself.
Recall that automorphisms of objects in $\Pt(X_{\et})$ are given by the absolute Galois groups of the residue fields of the corresponding geometric points of $X$.
Part a) then follows as the absolute Galois group of a field is trivial if and only if the field is separably closed. For part b), note that every preimage of $t$ under $\Spec(\mathcal{O}_{X,x}^{\mathrm{sh}}) \rightarrow \Spec(\mathcal{O}_{X,x})$ gives rise to a morphism $x\rightarrow t$ in $\Pt(X_{\et})$ as explained in \cref{rem:Pointsetaletopos2}.
Hence, the preimage of $t$ under the map is only allowed to consist of one point.
Identifying $t$ with the generic point yields the claim.
\end{proof}
We are very close to the assertion we are aiming at.
It remains to argue why a scheme  satisfying the properties in the last proposition is indeed everywhere strictly local.
The next proposition, which is based on a result by Schr\"oer, is the key to this conclusion. 
\begin{proposition}\label[proposition]{prop:eslgeomuni}
Let $X$ be an irreducible qcqs scheme.
The following assertions are equivalent:
\begin{itemize}
\item[(i)] The scheme is everywhere strictly local, i.e., all local rings of $X$ are strictly henselian.
\item[(ii)] The scheme is unibranch and the residue field $\kappa(\eta)$ of the generic point $\eta \in X$ is separably closed.
\item[(iii)] The scheme is geometrically unibranch and the residue field $\kappa(\eta)$ of the generic point $\eta \in X$ is separably closed.
\end{itemize}
\end{proposition}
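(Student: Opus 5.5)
The plan is to prove the cycle (i)$\Rightarrow$(iii)$\Rightarrow$(ii)$\Rightarrow$(i).

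\textbf{(i)$\Rightarrow$(iii).} Let $x\in X$ be a point. A strictly henselian local ring $\mathcal{O}_{X,x}$ coincides with its own strict henselization. Hence $\Spec(\mathcal{O}_{X,x}^{\mathrm{sh}})\to\Spec(\mathcal{O}_{X,x})$ is an isomorphism. Since $X$ is irreducible, $\Spec(\mathcal{O}_{X,x})$ has a unique minimal prime, so the strict henselization has exactly one minimal prime. This is geometric unibranchness in the sense of the Stacks project, characterized by the number of minimal primes of the strict henselization (of the reduced local ring). Applying (i) at the generic point, $\mathcal{O}_{X,\eta}$ is a strictly henselian local ring of dimension $0$. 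Passing to the reduction, $\kappa(\eta)$ is strictly henselian, i.e.\ separably closed.

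\textbf{(iii)$\Rightarrow$(ii).} This is immediate, since geometrically unibranch implies unibranch.

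\textbf{(ii)$\Rightarrow$(i).} This is the substantive direction, and it is where the cited result of Schr\"oer (\cite{MR3649361}) enters. I may first replace $X$ by $X_{\mathrm{red}}$, since being strictly henselian is insensitive to nilpotents, because henselian local rings lift idempotents and residue fields are unchanged. So $X$ is integral with function field $K=\kappa(\eta)$ separably closed.

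Fix a point $x$ and set $A=\mathcal{O}_{X,x}$. I need to show that $A$ is henselian with separably closed residue field. The strict henselization $A^{\mathrm{sh}}$ is a filtered colimit of local rings of \'etale $A$-algebras. Its total ring of fractions is $A^{\mathrm{sh}}\otimes_A K$, which is a filtered colimit of \'etale $K$-algebras. Because $K$ is separably closed, every such algebra is a finite product of copies of $K$, so $A^{\mathrm{sh}}\otimes_A K$ is a product of copies of $K$ indexed by the minimal primes of $A^{\mathrm{sh}}$. Unibranchness says that $A^{\mathrm{h}}$ has a single minimal prime. I would then argue that the extension $A^{\mathrm{h}}\to A^{\mathrm{sh}}$, which is ind-\'etale with generic fiber trivial over a separably closed field, forces $A^{\mathrm{sh}}$ also to have one minimal prime. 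This would show that (ii) upgrades to geometric unibranchness. Once $A^{\mathrm{sh}}$ is a domain with fraction field $K$, each finite \'etale local extension $A\to B$ appearing in the colimit is birational and integral over the normal-ish locus.

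The main obstacle is the step $A\cong A^{\mathrm{sh}}$ without assuming normality. My first attempt is to invoke Schr\"oer's criterion directly: a scheme is everywhere strictly local if and only if it is geometrically unibranch with separably closed residue fields at generic points, possibly after passing to normalization, which is a universal homeomorphism under geometric unibranchness. If that does not apply verbatim, I would argue through the normalization $\nu\colon X'\to X$. The scheme $X'$ is absolutely integrally closed up to purely inseparable extensions, hence everywhere strictly local by \cite[Proposition 2.6]{MR3649361}. Unibranchness makes $\nu$ radicial and integral, hence a universal homeomorphism, and universal homeomorphisms induce equivalences of \'etale topoi. Consequently the \'etale and Zariski topoi of $X$ agree, and therefore the stalks $\mathcal{O}_{X,x}$ are strictly henselian.
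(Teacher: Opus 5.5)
Your steps (i)$\Rightarrow$(iii) and (iii)$\Rightarrow$(ii) are fine. For (i)$\Rightarrow$(iii) you argue directly that $\mathcal{O}_{X,x}^{\mathrm{sh}}=\mathcal{O}_{X,x}$ has a unique minimal prime. The paper instead gets unibranchness from Schröer's (i)$\Rightarrow$(ii), then uses separably closed residue fields to make the residue extension of $A'$ purely inseparable. Your route is more self-contained.

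The gap is in (ii)$\Rightarrow$(i). Two claims you rely on are false:
\begin{itemize}
\item that unibranchness plus separably closed $\kappa(\eta)$ ``upgrades'' to geometric unibranchness;
\item that ``unibranchness makes $\nu$ radicial''.
\end{itemize}
Unibranchness only makes the normalization bijective. Radiciality needs the residue field of $A'$ to be purely inseparable over $\kappa(x)$, and that is exactly the extra condition in geometric unibranchness. Your ind-étale heuristic also points the wrong way. Because $K$ is separably closed, the generic fibre of $A^{\mathrm{sh}}$ splits completely. So separable residue field extensions of $A'$ over $\kappa(x)$ show up as \emph{additional} minimal primes of $A^{\mathrm{sh}}$; they are not ruled out.

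In fact (ii)$\Rightarrow$(i) is false. Let $V$ be a valuation ring of $\overline{\QQ}$ dominating $\ZZ_{(p)}$, with maximal ideal $m_V$ and residue field $\overline{\mathbf{F}}_p$. Put $A=\{a\in V : a \bmod m_V\in\mathbf{F}_p\}$.
\begin{itemize}
\item $A$ is a local domain with maximal ideal $m_V$, residue field $\mathbf{F}_p$ and fraction field $\overline{\QQ}$.
\item Every $v\in V$ is integral over $A$: lift the minimal polynomial $\bar g$ of $\bar v$ to a monic $g\in\ZZ[T]$ and use $g(T)-g(v)\in A[T]$. Hence the integral closure of $A$ is $V$, which is local, and $\Spec(A)=\{0,m_V\}$.
\item So $X=\Spec(A)$ is irreducible, affine and unibranch, with $\kappa(\eta)=\overline{\QQ}$ separably closed.
\item But $\mathcal{O}_{X,x}=A$ at the closed point has residue field $\mathbf{F}_p$, so it is not strictly henselian. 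It is even henselian, so $A^{\mathrm{h}}=A$ has one minimal prime while $A^{\mathrm{sh}}$ has infinitely many.
\end{itemize}

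So no argument can close your cycle through (ii). The paper obtains (i)$\Leftrightarrow$(ii) only by citing \cite[Proposition 2.6]{MR3649361}. With ``unibranch'' in the sense of Tag 0BPZ recalled in its proof, that implication does not hold as stated. What is true, and all that is used later in the proof of \cref{thm:Galterminal}, is (i)$\Leftrightarrow$(iii).

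Your normalization argument proves (iii)$\Rightarrow$(i) essentially verbatim if you start from (iii):
\begin{itemize}
\item Geometric unibranchness at every point makes $\nu\colon X'\to X_{\mathrm{red}}$ integral, surjective and radicial, hence a universal homeomorphism.
\item $X'$ is normal with separably closed function field, hence everywhere strictly local.
\item A universal homeomorphism $\Spec(A')\to\Spec(A)$ with $A'$ strictly henselian forces $A$ to be strictly henselian, as in the final lemma of \cref{sec:Trivial_Shapes}.
\end{itemize}
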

\begin{proof}
The equivalence $(i) \Leftrightarrow (ii)$ is proven in \cite[Proposition 2.6]{MR3649361}.
We show that it makes no difference if we replace the unibranch condition in (ii) by the condition of being geometrically unibranch.
Recall that a scheme is called (geometrically) unibranch if all local rings satisfy this property \cite[\href{https://stacks.math.columbia.edu/tag/0BQ1 }{Tag 0BQ1}]{stacksproject}.
As geometrically unibranch implies unibranch by definition, we only need to argue why $(i) \implies (iii)$.
Assuming (i), we know that the residue fields at all local rings are separably closed.
Hence, all extensions of these residue fields have to be purely inseparably. As we know that (i) implies $X$ to be unibranch, geometrically unibranch follows almost by definition:
A local ring $A$ is geometrically unibranch if it is unibranch and the residue field of the integral closure $A'$ of the reduction $A_{\mathrm{red}}$ in its fraction field is purely inseparable over the residue field of $A$ \cite[\href{https://stacks.math.columbia.edu/tag/0BPZ }{Tag 0BPZ}]{stacksproject}.
This is true for all local rings by the previous conclusions.
\end{proof}
According to this proposition, we are reduced to showing that the scheme $X$ is geometrically unibranch, i.e., the strict henselizations $\Spec(\mathcal{O}_{X,\xbar}^{\mathrm{sh}})$ of all local rings at geometric points $\xbar\rightarrow X$ are irreducible \cite[\href{https://stacks.math.columbia.edu/tag/06DM }{Tag 06DM}]{stacksproject}.
In fact, this turns out to be true by applying the following corollary to $\Spec(\mathcal{O}_{X,x}^{\mathrm{sh}})$.
It relies on the more general result of the lemma below.
\begin{corollary}\label[corollary]{cor:henselirred}
Let $A$ be a local ring with irreducible spectrum and strict henselization $A^{\mathrm{sh}}$.
If the fiber of the generic point under the canonical map $\Spec(A^{\mathrm{sh}}) \rightarrow \Spec(A)$ consists of only one point, the spectrum of $A^{\mathrm{sh}}$ is also irreducible.
\end{corollary}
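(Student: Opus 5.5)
We reduce to a statement about the topology of $\Spec(A^{\mathrm{sh}})$ and the map $\pi\colon \Spec(A^{\mathrm{sh}}) \to \Spec(A)$, which is the more general lemma announced above. Write $\eta$ for the generic point of $\Spec(A)$ and $\mathfrak p_1,\dots$ for the minimal primes of $A^{\mathrm{sh}}$.

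The key step is to show that every minimal prime of $A^{\mathrm{sh}}$ lies over $\eta$. The map $A \to A^{\mathrm{sh}}$ is a filtered colimit of (localizations of) étale $A$-algebras, so it is flat. By going down for flat ring maps, generalizations lift along $\pi$. Hence if $\mathfrak q \subset A^{\mathrm{sh}}$ is minimal and $\pi(\mathfrak q)=\mathfrak p$, then $\eta$ is a generalization of $\mathfrak p$, since $\Spec(A)$ is irreducible. It therefore lifts to some $\mathfrak q' \subseteq \mathfrak q$ with $\pi(\mathfrak q')=\eta$. Minimality forces $\mathfrak q' = \mathfrak q$, so $\pi(\mathfrak q)=\eta$. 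Thus the set of minimal primes of $A^{\mathrm{sh}}$ injects into the fiber $\pi^{-1}(\eta)$.

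The conclusion then follows from the hypothesis. The fiber over $\eta$ consists of a single point, so $A^{\mathrm{sh}}$ has at most one minimal prime. It has at least one, being a nonzero ring, since $A^{\mathrm{sh}}$ is local and in particular $\Spec(A^{\mathrm{sh}})\neq\emptyset$. Because every prime contains a minimal prime, $\Spec(A^{\mathrm{sh}})$ is the closure of that unique minimal prime, and hence irreducible.

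The main obstacle is justifying flatness and the going-down property. I would cite flatness of $A \to A^{\mathrm{sh}}$ from the Stacks Project, via its description as a colimit of étale local algebras, together with the standard fact that flat maps satisfy going down. The paper's lemma is presumably phrased for a general flat local map, or for a map with the going-down property, and the corollary is the special case $A\to A^{\mathrm{sh}}$. No Noetherian hypothesis is needed, because the argument uses only minimal primes, which exist in any nonzero ring by Zorn's lemma.
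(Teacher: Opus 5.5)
Your proof is correct and is essentially the paper's argument. The paper likewise uses flatness of $\Spec(A^{\mathrm{sh}})\to\Spec(A)$ to show, via an auxiliary lemma, that every maximal point of $\Spec(A^{\mathrm{sh}})$ maps to the generic point, and then uses the one-point fiber to get a unique maximal point; the only difference is that the lemma obtains this from the closure identity $f^{-1}(\overline{\{\eta\}})=\overline{f^{-1}(\eta)}$ for generizing maps and pro-constructible sets, whereas your direct going-down lift of a minimal prime gets the same step more elementarily.
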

\begin{lemma}
Let $Y$ be an irreducible qcqs scheme with $\eta \in Y$ its generic point and $f\colon X\rightarrow Y$ a flat morphism of schemes. 
Then $f$ maps every maximal point of $X$ to $\eta$.
\end{lemma}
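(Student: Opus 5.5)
The plan is to use the classical fact that flat morphisms satisfy going down, so generizations lift along $f$.

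Let $\xi\in X$ be a maximal point of $X$, meaning a generic point of an irreducible component, equivalently a point with no proper generization in $X$. Put $y\colonequals f(\xi)\in Y$. Since $Y$ is irreducible with generic point $\eta$, the point $\eta$ is a generization of $y$, i.e.\ $y\in\overline{\{\eta\}}=Y$. To use going down, pass to local rings: choose affine opens $\Spec(B)\ni\xi$ and $\Spec(A)\ni y$ with $f(\Spec(B))\subset\Spec(A)$. Flatness of $f$ gives that $A_{y}\rightarrow B_{\xi}$ is a flat local homomorphism, and a flat local homomorphism is faithfully flat, so $\Spec(B_\xi)\rightarrow\Spec(A_y)$ is surjective (\stacks{00HS}/going down, \stacks{00HR}).

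Now $\eta$ corresponds to a prime of $A_y$, since it generizes $y$ and $\Spec(A_y)$ is the set of generizations of $y$ (\stacks{01J7}). By surjectivity there is a prime $\mathfrak{q}\subset B_\xi$ lying over $\eta$, that is, a point $\xi'\in X$ which generizes $\xi$ and satisfies $f(\xi')=\eta$. Maximality of $\xi$ forces $\xi'=\xi$, hence $f(\xi)=\eta$.

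No step is a serious obstacle. The only care needed is in two places: reducing to the local ring to get surjectivity, which is where going down is used, and noting that the ``maximal point'' notion means a minimal prime of the local ring $B_\xi$, so that $\Spec(B_\xi)=\{\xi\}$ as a topological space. In fact the surjectivity of $\Spec(B_\xi)\to\Spec(A_y)$ then directly says $\Spec(A_y)$ is a single point $\{y\}$, and so $y=\eta$. Quasi-compactness and quasi-separatedness of $Y$ are not needed.
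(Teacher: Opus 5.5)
Your proof is correct, and it takes a more local route than the paper.

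The paper argues globally. Flat morphisms are generizing, and for a generizing morphism the preimage of the closure of a pro-constructible set is the closure of the preimage (a lemma cited from G\"ortz--Wedhorn). This gives $X = f^{-1}(\overline{\{\eta\}}) = \overline{f^{-1}(\eta)}$. The paper then reads off that every maximal point lies in $f^{-1}(\eta)$. That last step tacitly uses that points in the closure of the (locally) pro-constructible set $f^{-1}(\eta)$ are specializations of its points, which is not automatic for arbitrary subsets of non-noetherian schemes.

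You instead apply going down pointwise at the maximal point $\xi$. Since $\mathcal{O}_{Y,f(\xi)} \to \mathcal{O}_{X,\xi}$ is a flat local, hence faithfully flat, homomorphism and $\Spec(\mathcal{O}_{X,\xi}) = \{\xi\}$, the point $f(\xi)$ has no proper generization, so it equals $\eta$.

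Your argument is more elementary and self-contained. It avoids pro-constructibility and the closure-versus-specialization step the paper leaves implicit, and, as you note, it shows the qcqs hypothesis on $Y$ is unnecessary. The paper's version is shorter only because it outsources the work to a cited lemma.
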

\begin{proof}
We use \cite[Lemma 14.42]{AG1}, which tells us that taking the inverse image of $\{\eta\}$ under $f$ is compatible with taking closures, i.e., we have
$f^{-1}(\etabar)=\overline{f^{-1}(\eta)} $.
Note that the Lemma can be applied to our prerequisites as flat morphisms of schemes are generizing~\cite[Lemma~14.9]{AG1} and the canonical map $\Spec(\kappa(\eta))\rightarrow X$ mapping to $\eta$ exhibits the one point set $\{\eta\}$ as a pro-constructible set.
As $\eta$ is the generic point, the closure of the preimage of $\eta$ has to be the whole space $X$.
Thus, all maximal points of $X$, which are the unique generic points of the irreducible components, have to be contained in the preimage, i.e., they are mapped to $\eta$ under $f$. 
\end{proof}
\begin{proof}[Proof of \cref{cor:henselirred}]
The canonical map $\Spec(A^{\mathrm{sh}}) \rightarrow \Spec(A)$ is faithfully flat and, in particular, a flat morphism mapping to an irreducible qcqs scheme by our assumptions on $A$.
By the lemma above, the fiber of the generic point contains all maximal points of $\Spec(A^{\mathrm{sh}})$.
As there is only one point in the preimage assumed, we can deduce that the spectrum of the strict henselization has a unique maximal point which means nothing else than that it its irreducible.
\end{proof}
Roughly speaking, we have seen the following: 
\begin{itemize}
    \item[1.] Irreducibility of $X$ provides a candidate for the terminal object of $\Gal(X)$ given by the unique generic point of $X$.
    \item[2.] To secure that there are no non-trivial automorphisms of objects in $\Gal(X)$ that do not contract to the identity on the terminal object under the (condensed) classifying anima, it is necessary to add the condition on $X$ to be everywhere strictly local.
\end{itemize}
\begin{proof}[Complete Proof of \cref{thm:Galterminal}] We put all the pieces together. 
Let $X$ be an irreducible and everywhere strictly local qcqs scheme.
Then its Galois category has a terminal object by \cref{cor:eslcase}.
For the other directions, we assume that $\Gal(X)$ has a terminal object.
Then, the underlying category of points $\Pt(X_{\et})$ has a terminal object and \cref{prop:constraintsterm} shows that $X$ is an irreducible scheme satisfying additional properties which imply that $X$ is already everywhere strictly local:
Property b) in \cref{prop:constraintsterm} combined with \cref{cor:henselirred} tells us that for every point $x\in X$ the strict henselizations $\mathcal{O}_{X,x}^{\mathrm{sh}}$ is irreducible.
As this means nothing else than that $X$ is geometrically unibranch \cite[\href{https://stacks.math.columbia.edu/tag/06DM }{Tag 06DM}]{stacksproject}, the claim follows from the equivalence of (i) and (iii) in \cref{prop:eslgeomuni} under consideration of property a) in \cref{prop:constraintsterm}.
\end{proof}
\begin{remark}
\cref{thm:Galterminal} indeed shows that the Galois category $\Gal(X)$ only hits certain types of categories.
Whenever it has a terminal object, it has level-wise the structure of a poset and there do not appear non-trivial automorphisms of points.
\end{remark}
\subsubsection{Classification of Gal(X) having an initial object}
We have explained in great detail what it means for a scheme to have a Galois category with trivial object.
We can make similar considerations for the case of an initial object.
\begin{theorem}\label[theorem]{thm:Galinitial}
Let $X$ be a qcqs scheme.
Then the following are equivalent:
\begin{itemize}
\item[(i)] The scheme $X$ is the spectrum of a strictly henselian local ring.
\item[(ii)] The condensed category $\Gal(X)$ has an initial object.
\item[(iii)] The category of points $\Pt(X_{\et})$ has an initial object.
\end{itemize}
\end{theorem}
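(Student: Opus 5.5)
The plan is to prove the cycle (i) $\Rightarrow$ (ii) $\Rightarrow$ (iii) $\Rightarrow$ (i).

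\textbf{(i) $\Rightarrow$ (ii).} Let $X=\Spec(A)$ with $A$ strictly henselian local. For every $S\in\Extr$, \cref{prop:alter_Gal_prop} says that $A_S=\colim_{i\in I^{\op}}\prod_{s\in S_i}A^{\mathrm{sh}}=\colim_{i\in I^{\op}}\prod_{s\in S_i}A$ is initial in the category of $S$-connected $w$-contractible rings over $X$. By \cref{prop:Galois_weaklycontractible} this category is equivalent to $\Gal(X)(S)$. Hence each $\Gal(X)(S)$ has an initial object, namely the coherent functor induced by $(X_S)_{\mathrm{cl}}\to X$ as in \cref{lem:wstrictly_coherentfunctor}. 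This is exactly the level-wise definition of $\Gal(X)$ having an initial object.

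One has to be careful with variance here. \cref{prop:Gal_wcontrrings} identifies $\Gal(X)(S)$ with $(X_S^{\mathrm{wc}})^{\op}$, and passing to rings reverses arrows again. So the object that is initial among rings is initial in $\Gal(X)(S)$, which matches how \cref{prop:alter_Gal_prop} is phrased. I would re-check this in the case $S=\ast$: there the closed point $\bar{s}$ should be initial in $\Pt(X_{\et})$, since every geometric point lifts uniquely to $X^{\mathrm{sh}}=X$.

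\textbf{(ii) $\Rightarrow$ (iii).} Evaluate at $S=\ast$ and use $\Gal(X)(\ast)\simeq\Pt(X_{\et})$.

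\textbf{(iii) $\Rightarrow$ (i).} Suppose $i$ is initial in $\Pt(X_{\et})$.
\begin{enumerate}
\item By \cref{lem:weaklyobjects}, $X$ is the spectrum of a local ring $A$, and $i$ lies over the closed point $\mathfrak m$.
\item Since $\Hom(i,i)=\Gal_{\kappa(\mathfrak m)}$ must be trivial, the residue field $\kappa(\mathfrak m)$ is separably closed.
\item It remains to show that $A$ is henselian. By \cref{rem:Pointsetaletopos}, for any geometric point $\ybar$ the set $\Hom(i,\ybar)$ is the set of lifts of $\ybar$ along $\Spec(A^{\mathrm{sh}})\to\Spec(A)$. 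Initiality means this lift is unique for every $\ybar$, i.e.\ every geometric fiber of $\Spec(A^{\mathrm{sh}})\to\Spec(A)$ has exactly one geometric point compatible with the given one.
\item Since $\kappa(\mathfrak m)$ is separably closed, $A^{\mathrm{sh}}=A^h$, and $A\to A^h$ is ind-étale and faithfully flat. I would show that a faithfully flat ind-étale local map $A\to A^h$ with all fibers singletons over geometric points (so residue extensions trivial after separable closure) is an isomorphism. The argument would go through a finite étale local $A$-algebra $B$ (a factor of a standard étale algebra) occurring in the colimit. Its fibers over each prime of $A$ are finite étale, and uniqueness of lifts forces each fiber to have degree one over the separably closed points. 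So $B$ is finite flat of rank one over the relevant locus, and hence $A\to B$ is an isomorphism.
\end{enumerate}

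The main obstacle is step 4: converting the uniqueness of lifts into an isomorphism $A\cong A^{\mathrm{sh}}$. Rank-one arguments only see finite étale pieces, while $A^h$ is built from étale neighbourhoods, which need not be finite. If that route stalls, I would try to show directly that $\Spec(A^{\mathrm{sh}})\to\Spec(A)$ is a universally injective, flat, surjective monomorphism. Then $A\otimes_A A^{\mathrm{sh}}\to A^{\mathrm{sh}}\otimes_A A^{\mathrm{sh}}$ is an isomorphism, and faithfully flat descent gives $A\cong A^{\mathrm{sh}}$. To establish the monomorphism property I would use that $A^{\mathrm{sh}}\otimes_A A^{\mathrm{sh}}$ is ind-étale over $A^{\mathrm{sh}}$ and that its geometric points correspond to pairs of lifts, which initiality forces to coincide.
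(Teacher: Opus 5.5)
Your arguments for (i)$\Rightarrow$(ii)$\Rightarrow$(iii), and the first part of (iii)$\Rightarrow$(i) (locality via \cref{lem:weaklyobjects}, reading initiality as uniqueness of lifts along $\Spec(A^{\mathrm{sh}})\to\Spec(A)$), agree with the paper.

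In step 4, discard your first route. $A^{h}$ is a colimit of étale neighbourhoods, not of finite étale algebras, so there is no finite étale $B$ on which a rank-one argument could run.

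Your fallback is correct, but you should spell out where ind-étaleness enters. Take a geometric point $\Spec(\Omega)$ of $A^{\mathrm{sh}}\otimes_A A^{\mathrm{sh}}$ lying over $y\in\Spec(A)$. Both projections factor through the separable closure of $\kappa(y)$ in $\Omega$, because residue extensions of $A\to A^{\mathrm{sh}}$ are separable algebraic. So they give two morphisms $i\to\bar y$, which initiality forces to agree.

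This only shows that the diagonal $\Spec(A^{\mathrm{sh}})\to\Spec(A^{\mathrm{sh}}\otimes_A A^{\mathrm{sh}})$ is surjective, i.e.\ universal injectivity. By itself that does not give a monomorphism: a purely inseparable field extension is universally injective but not a monomorphism. The upgrade comes from $A\to A^{\mathrm{sh}}$ being weakly étale, so the multiplication map $A^{\mathrm{sh}}\otimes_A A^{\mathrm{sh}}\to A^{\mathrm{sh}}$ is flat. A flat surjective closed immersion is an isomorphism: after localizing at a prime, a flat local surjection onto a nonzero ring is faithfully flat, hence injective. Descent along the faithfully flat monomorphism then gives $A\cong A^{\mathrm{sh}}$, as you say.

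This is a genuinely different route from the paper's. The paper first proves, in \cref{prop:initalconditions}, that $\Spec(A^{\mathrm{sh}})\to\Spec(A)$ is a universal homeomorphism, condition $(\ast)$:
\begin{itemize}
\item it is injective, and the residue extensions are both separable and purely inseparable, hence trivial;
\item it is faithfully flat and quasi-compact, hence a universal quotient map.
\end{itemize}
Then \cref{prop:localring_homeo} transports the idempotent criterion for henselianity of finite algebras from $A^{\mathrm{sh}}$ to $A$ along the induced homeomorphisms, and uses the residue-field isomorphism for strictness.

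Your version packages injectivity and pure inseparability into the single statement that the diagonal is surjective. It replaces the henselian criterion by descent along a faithfully flat monomorphism, yields $A\cong A^{\mathrm{sh}}$ directly, and makes your step 2 superfluous. In effect it is a hands-on version of the weakly étale universal homeomorphism argument the paper mentions right after \cref{prop:localring_homeo}. The paper's route uses only point-set topology and the elementary idempotent criterion, and it isolates the geometric condition $(\ast)$ as an intermediate result.
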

Again, we outsource several steps of the proof.
The main work for $(i)\implies (ii)$ was already done in \cref{sec:via_weakly_proobjects}. 
Namely, it is basically the content of \cref{prop:alter_Gal_prop}.
\begin{corollary}\label[corollary]{cor:stricthensel_initial}
If we have given $X=\Spec(A)$ for $A$ a strictly henselian local ring, then $\Gal(X)$ has an initial object. 
\end{corollary}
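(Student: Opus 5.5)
The statement follows from the alternative description of the Galois category in \cref{sec:via_weakly_proobjects}. Since $A$ is local, $X=\Spec(A)$ is affine. For every $S\in\Extr$, \cref{prop:Galois_weaklycontractible} (via \cref{prop:Gal_wcontrrings}) therefore identifies $\Gal(X)(S)$ with the category of $S$-connected $w$-contractible rings over $A$ and $S$-connected ring maps. In this category a map of rings $A_S\to B$ gives a morphism from $A_S$ to $B$. We need a level-wise initial object, because by definition a condensed category has an initial object if each $\Gal(X)(S)$ has one.

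The candidate at level $S=\lim_i S_i$ is the ring $A_S=\colim_{i\in I^{\op}}\prod_{s\in S_i}A^{\mathrm{sh}}$ of \cref{ex:stricthensel_wstrictly}. Since $A$ is strictly henselian, $A^{\mathrm{sh}}=A$, so $\Spec(A_S)=X_S=\lim_i\coprod_{S_i}X$. By \cref{ex:stricthensel_wstrictly} this is $w$-strictly local with $\pi_0=S\in\Extr$, hence weakly contractible by \cref{rec:wstrictly_wcontractible}. So $A_S$ is an object of $\Gal(X)(S)$.

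Next I would apply \cref{prop:alter_Gal_prop}, whose second half says that $A_S$ is initial precisely when $A$ is strictly henselian. The key point is the adjunction between $\pi_0$ on affine schemes pro-étale over $X$ and the functor $S\mapsto X_S$ from profinite sets. For any $S$-connected $w$-contractible $B$ over $A$, maps $\Spec(B)\to X_S$ over $X$ correspond to maps $\pi_0(\Spec B)=S\to S$. Requiring the map to be $S$-connected forces it to correspond to $\mathrm{id}_S$, so exactly one morphism $A_S\to B$ exists in $\Gal(X)(S)$.

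The step I expect to need most care is applying this adjunction. $\Spec(B)$ is pro-étale over $X$ only because it is a projective pro-object, which is part of \cref{lem:weakly_contractible_projective}. The unit $\Spec(B)\to X_{\pi_0(\Spec B)}$ has to be a morphism of schemes over $X$, and it has to induce the given identification on $\pi_0$. Both follow from the cited adjunction of Bhatt--Scholze.

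Finally, compatibility in $S$ is not needed for the level-wise definition of an initial object. It holds anyway, since $S\mapsto A_S$ is natural.
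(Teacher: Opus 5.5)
Your proposal is correct and follows essentially the same route as the paper. The paper deduces the corollary directly from \cref{prop:alter_Gal_prop}. That proof notes $X_S=\Spec(A_S)$ when $A$ is strictly henselian and uses the adjunction between $\pi_0$ and $S\mapsto X_S$ to obtain the unique $S$-connected map $A_S\to B$ in $\Gal(X)(S)\simeq (X_S^{\mathrm{wc}})^{\op}$, exactly as you do; your explicit checks that $A_S$ is an object and that the adjoint map induces the identity on $\pi_0$ simply spell out what the paper leaves implicit.
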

\begin{proof}
This was proven in \cref{prop:alter_Gal_prop} as we have shown that for every $S=\lim_{i \in I} S_i\in \Extr$, the category $\Gal(X)(S)$ has an initial object given by the ring 
$\colim_{i \in I^{\op}}\prod_{s\in S_i} A$.
\end{proof}
Combining the two subsequent propositions gives the implication $(iii)\implies (i)$.
As the existence of an initial object of $\Gal(X)$ presupposes that this also applies to $\Pt(X_{\et})$, we have $(ii) \implies (iii)$ for free.
Moreover, note that we have the counterpart of \cref{rem:cofil_term_ini} meaning that $\Gal(X)$ has an initial object if and only if it is filtered.
\begin{proposition}\label[proposition]{prop:initalconditions}
Let $X$ be a qcqs scheme.
If the category of points $\Pt(X_{\acute{e}t})$ has an initial object, $X$ is the spectrum of a local ring $A$ satisfying the following additional property:
\begin{itemize}
    \item[$(\ast)$] The canonical map $\Spec(A^{\mathrm{sh}}) \rightarrow \Spec(A)$ is a universal homeomorphism.
\end{itemize}
\end{proposition}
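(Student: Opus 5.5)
First, an initial object of $\Pt(X_{\et})$ is in particular weakly initial, so by \cref{lem:weaklyobjects} the scheme $X$ is affine, $X=\Spec(A)$ with $A$ local. By \cref{rem:morphism_empty} the initial object $i$ is the geometric point $\bar{m}$ over the closed point $m$. This reduces the statement to verifying property $(\ast)$. Recall that $\Spec(A^{\mathrm{sh}})\to\Spec(A)$ is a limit of étale maps, hence flat and surjective. So it suffices to show two things: the map is injective, and it induces purely inseparable extensions on residue fields. A surjective, injective, integral-type map with purely inseparable residue extensions is then a universal homeomorphism. An alternative route is to use that $A^{\mathrm{sh}}\to$ each fibre is a homeomorphism on spectra, via flatness plus bijectivity; I would check which criterion fits best, e.g.\ universally injective plus surjective plus universally closed, or the Stacks characterization \stacks{0CNF}.

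Next I unwind the initiality using \cref{rem:Pointsetaletopos}. Morphisms $\bar m\to\bar y$ are lifts $\bar y\to\Spec(A^{\mathrm{sh}})$ of $\bar y\to X$. Uniqueness of such morphisms for every geometric point $\bar y$ of $X$ says exactly that each geometric point of $X$ lifts uniquely along $\Spec(A^{\mathrm{sh}})\to\Spec(A)$. Existence is automatic, since every point is a generalization of $m$.

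Uniqueness is the heart of the argument. Take $\bar y=\Spec(\kappa(y)^{\mathrm{sep}})$ over $y\in X$. The lifts correspond to points $y'$ of the fibre over $y$ together with $\kappa(y)$-embeddings $\kappa(y')\hookrightarrow\kappa(y)^{\mathrm{sep}}$. The fibre is the spectrum of $A^{\mathrm{sh}}\otimes_A\kappa(y)$, which is a colimit of étale $\kappa(y)$-algebras, so every $\kappa(y')$ is separable algebraic over $\kappa(y)$. Uniqueness then forces two conditions. First, the fibre is a single point, which gives injectivity. Second, that point's residue field admits exactly one $\kappa(y)$-embedding into $\kappa(y)^{\mathrm{sep}}$, so a separable algebraic extension with a unique embedding is trivial, i.e.\ $\kappa(y')=\kappa(y)$. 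Taking $y=m$ also recovers that $\kappa(m)$ is separably closed, since $\Aut(\bar m)=\Gal_{\kappa(m)}$ must be trivial, although this is not needed here.

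Finally, conclude the universal homeomorphism property. The map is bijective with trivial residue field extensions and is flat, so the fibres are one-point spectra of the form $\kappa(y)\otimes_A A^{\mathrm{sh}}$. These are local ind-étale $\kappa(y)$-algebras with residue field $\kappa(y)$, and therefore equal to $\kappa(y)$. The map is then integral: it is ind-étale and radicial, since ind-étale plus universally injective gives ind-(étale monomorphism), i.e.\ ind-open immersions. I expect this step to be the main obstacle. Being bijective and radicial is easy, but universal closedness is the subtle part. I would first try to use that $A\to A^{\mathrm{sh}}$ is ind-étale and radicial, which makes it a filtered colimit of étale radicial maps, i.e.\ open immersions containing the closed point. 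Since $A$ is local, any open subscheme containing the closed point is all of $\Spec(A)$, so each such map is an isomorphism and $A\cong A^{\mathrm{sh}}$. That would yield $(\ast)$, and even more.
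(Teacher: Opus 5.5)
Up to the point where every fibre of $\Spec(A^{\mathrm{sh}})\to\Spec(A)$ is a single point $y'$ with $\kappa(y')=\kappa(y)$, your argument is the paper's. \cref{lem:weaklyobjects} gives $X=\Spec(A)$ with $A$ local. Uniqueness of $\bar m\to\bar y$ means uniqueness of a lift of $\bar y$, i.e.\ of a point $y'$ together with a $\kappa(y)$-embedding $\kappa(y')\hookrightarrow\kappa(y)^{\mathrm{sep}}$, and this forces both conditions.

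The gap is the last step, from ``surjective and universally injective'' to ``universal homeomorphism''. Your first criterion needs integrality, which you do not have for $A\to A^{\mathrm{sh}}$ a priori. Your second route rests on the claim that an ind-\'etale universally injective map is a filtered colimit of \'etale radicial maps. You assert this without proof, and it is not formal, because universal injectivity of the colimit does not pass to the stages. In the standard presentation of $A^{\mathrm{sh}}$ over \'etale neighbourhoods, $A\to A\times A$ (mapping to $A^{\mathrm{sh}}$ via the first factor) is a stage, and it is not even injective on spectra. For local $A$, proving that the radicial stages are cofinal amounts to the conclusion $A\cong A^{\mathrm{sh}}$ that you want to derive from it. So as written this step assumes what needs proof. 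Your observation that the fibres equal $\kappa(y)$ is correct, but it gives no control over the topology after base change.

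The missing idea, which the paper uses, is that no closedness or integrality is needed. The map $\Spec(A^{\mathrm{sh}})\to\Spec(A)$ is quasi-compact and faithfully flat. This property is stable under base change, and such morphisms are submersive, so every base change is a topological quotient map. Your two facts (injective, with purely inseparable residue field extensions) say exactly that the map is universally injective; it is also surjective. Hence every base change is a continuous bijective quotient map, i.e.\ a homeomorphism, which is $(\ast)$.

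If you prefer to go straight to $A\cong A^{\mathrm{sh}}$, which the paper only obtains afterwards in \cref{prop:localring_homeo} (or via Stacks Project Tag 0F6V), replace your unproved lemma by a diagonal argument:
\begin{itemize}
\item $A^{\mathrm{sh}}\otimes_A A^{\mathrm{sh}}\to A^{\mathrm{sh}}$ is flat, since ind-\'etale implies weakly \'etale.
\item It is surjective on spectra, by universal injectivity.
\item A flat quotient $R\to R/I$ with $V(I)=\Spec(R)$ has $I=0$: after localizing at any prime it becomes a flat local homomorphism, hence faithfully flat and injective.
\item So $A\to A^{\mathrm{sh}}$ is a faithfully flat epimorphism, and therefore an isomorphism.
\end{itemize}
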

\begin{proof}
Again by \cref{lem:weaklyobjects}, $X$ corresponds to a local ring $A$ and the initial object $i \in \Pt(X_{\et})$ is given by its unique maximal ideal.
The morphism $\Spec(A^{\mathrm{sh}}) \rightarrow \Spec(A)$, where $A^{\mathrm{sh}}$ is the strict henselization of the local ring $A$, is quasi-compact faithfully flat and thus a quotient map of topological spaces \cite[Proposition 14.39]{AG1}.
As the property to be faithfully flat quasi-compact is stable under base change, also every base change of $\Spec(A^{\mathrm{sh}}) \rightarrow \Spec(A)$ is a quotient map \cite[Corollary 14.43]{AG1}.
Since moreover a bijective quotient map is a homeomorphism and surjectivity is stable under base change, to show ($\pt$) we are reduced to proving that $\Spec(A^{\mathrm{sh}}) \rightarrow \Spec(A)$ is universally injective.
Referring to \cite[Proposition 4.35]{AG1}, this follows from the following two steps.
\begin{itemize}
    \item[\textbf{1.}] We show that the map of schemes $\Spec(A^{\mathrm{sh}}) \rightarrow \Spec(A)$ is injective:
    For a point $x\in \Spec(A)$, every object in the fiber of $x$ under $\Spec(A^{\mathrm{sh}}) \rightarrow \Spec(A)$ determines at least one morphism $i \rightarrow x$.
    As there is assumed to exist only one such morphism, the fiber has to consist of exactly one point.
    As the argument works for every $x\in \Spec(A)$, we can deduce injectivity. 
    \item[\textbf{2.}] For a point $x\in \Spec(A)$, we denote its preimage under $\Spec(A^{\mathrm{sh}}) \rightarrow \Spec(A)$ by $x'$.
    Then by \cite[\href{https://stacks.math.columbia.edu/tag/092R}{Lemma 092R}]{stacksproject}, the field extension $\kappa(x')$ is separable algebraic over $\kappa(x)$.
    We show that it is moreover purely inseparable over $\kappa(x)$.
    Therefore, it is enough to show that there exists exactly one $\kappa(x)$-embedding of $\kappa(x')$ into an algebraic closure $\Omega$ \cite[Corollary B.102. and B.99. (2)]{AG1}.
    As $\kappa(x')$ is already known to be separable, every $\kappa(x)$-embedding into an algebraic closure factors over the separable closure  $\kappa(x)^{\mathrm{sep}}\rightarrow \Omega$.
    A morphism $i \rightarrow x$ from the initial object $i$ in $\Pt(X_{\et})$ corresponds to a lift $\Spec(\kappa(x)^{\mathrm{sep}})\rightarrow \Spec(A^{\mathrm{sh}})$ such that the triangle
    \[
  \begin{tikzcd}[row sep=small]
   \Spec(\kappa(x)^{\mathrm{sep}}) \arrow[dotted]{r} \arrow[swap]{dr} & \Spec(A^{\mathrm{sh}}) \arrow{d} \\
     & \Spec(A)
  \end{tikzcd}
    \]
commutes.
In particular, the lift factors over the unique preimage $x'$ of $x$ as a map
\begin{align}\label{factorization}
\Spec(\kappa(x)^{\mathrm{sep}})\rightarrow \Spec(\kappa(x'))(\rightarrow \Spec(A^{\mathrm{sh}}))
\end{align} 
that is compatible with the maps to $\Spec(\kappa(x))$.
In other words, (\ref{factorization}) corresponds to a $\kappa(x)$-embedding $\kappa(x')\rightarrow \kappa(x)^{\mathrm{sep}}$.
By uniqueness of $i\rightarrow x$, there is exactly one choice for the factorization (\ref{factorization}).
This means, there is a unique $\kappa(x)$-embedding $\kappa(x')\rightarrow \kappa(x)^{\mathrm{sep}}$. 
\end{itemize}
Note that from Part 2. it even follows that $\kappa(x)\simeq\kappa(x')=\kappa(x)^{\mathrm{sep}}$ as $\kappa(x')$ is not only a separable but also a purely inseparable field extension of $\kappa(x)$.
\end{proof}
\begin{proposition}\label[proposition]{prop:localring_homeo}
A local ring $A$ which satisfies condition $(\ast)$ in \cref{prop:initalconditions} is strictly henselian.
Indeed, we have $A\simeq A^{\mathrm{sh}}.$
\end{proposition}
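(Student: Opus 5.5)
Write $\varphi\colon A\to A^{\mathrm{sh}}$ for the canonical local homomorphism and $\mathfrak{m}$ for the maximal ideal of $A$. The plan is to show that $\varphi$ is an isomorphism, using that it is faithfully flat and a filtered colimit of étale $A$-algebras, so that $A^{\mathrm{sh}}$ is ind-étale over $A$. Then $A\simeq A^{\mathrm{sh}}$ and $A$ is strictly henselian. The strategy is to exploit that $(\ast)$ forces $\Spec(A^{\mathrm{sh}})\to\Spec(A)$ to be an ind-étale morphism that is also universally injective. The point is that an étale, universally injective morphism is an open immersion. Combined with faithful flatness (hence surjectivity) and locality, this should force each étale piece to be trivial.

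First, I reduce $(\ast)$ to statements about the étale pieces. Write $A^{\mathrm{sh}}=\colim_{i} A_i$, where each $A_i$ is a local ring of an étale $A$-algebra $B_i$ at a prime $\mathfrak{q}_i$ over $\mathfrak{m}$. It is cleaner to take the standard presentation of $A^{\mathrm{sh}}$ as a colimit of étale $A$-algebras $B_i$ together with maps $B_i\to\kappa(\mathfrak{m})^{\mathrm{sep}}$. Universal injectivity of $\Spec(A^{\mathrm{sh}})\to\Spec(A)$ is equivalent to two conditions: injectivity on points, and purely inseparable residue field extensions. Together with the separability from Stacks 092R, the second condition gives $\kappa(x')=\kappa(x)$ for all $x$, as already noted in the proof of \cref{prop:initalconditions}. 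In particular $\kappa(\mathfrak{m})=\kappa(\mathfrak{m})^{\mathrm{sep}}$, so the residue field of $A$ is separably closed. Hence $A^{\mathrm{sh}}=A^{h}$, the henselization, and it remains to show that $A$ is henselian.

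Next, I show that $A$ is henselian. Take an étale $A$-algebra $B$ with a prime $\mathfrak{q}$ over $\mathfrak{m}$ such that $\kappa(\mathfrak{q})=\kappa(\mathfrak{m})$. I need a section $B\to A$ sending $\mathfrak{q}$ to $\mathfrak{m}$. By the universal property of henselization there is an $A$-map $B_{\mathfrak{q}}\to A^{h}$. The morphism $\Spec(A^{h})\to\Spec(B_{\mathfrak{q}})$ is surjective: $A^h$ is also the henselization of $B_{\mathfrak{q}}$, which is flat local, and a flat local homomorphism is surjective on spectra. The composite $\Spec(A^h)\to\Spec(A)$ is injective. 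Therefore $\Spec(B_{\mathfrak{q}})\to\Spec(A)$ is injective, and the same holds after any base change, so it is universally injective. Moreover $B_{\mathfrak{q}}$ is a filtered colimit of étale $A$-algebras $B_f$. A flat, universally injective, essentially étale local map should then be an isomorphism. One route is that $B_{\mathfrak{q}}\otimes_A B_{\mathfrak{q}}\to B_{\mathfrak{q}}$ is a surjection of flat $B_{\mathfrak{q}}$-algebras which is bijective on points, and the diagonal of an unramified map is an open immersion, so it is an isomorphism. This makes $A\to B_{\mathfrak{q}}$ a flat epimorphism of local rings, hence faithfully flat. A faithfully flat epimorphism is an isomorphism.

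Finally, I obtain the section. Since $A\simeq B_{\mathfrak{q}}$, the localization map $B\to B_{\mathfrak{q}}\simeq A$ is the desired retraction. So $A$ is henselian with separably closed residue field, that is, strictly henselian, and $A\simeq A^{\mathrm{sh}}$. Alternatively, the same epimorphism argument can be applied directly to $A\to A^{\mathrm{sh}}$: it is a faithfully flat epimorphism, since universal injectivity of an ind-unramified map gives that the diagonal is an isomorphism, and so it is an isomorphism. I expect the main obstacle to be this step: rigorously deducing that the multiplication map $A^{\mathrm{sh}}\otimes_A A^{\mathrm{sh}}\to A^{\mathrm{sh}}$ is an isomorphism. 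The plan is to use that the diagonal of each étale $B_i$ is an open and closed immersion. Universal injectivity, together with the equality of residue fields, forces the complement of the diagonal to be empty in the limit, because $\Spec(A^{\mathrm{sh}}\otimes_A A^{\mathrm{sh}})\to\Spec(A^{\mathrm{sh}})$ is injective.
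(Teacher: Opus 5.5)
Your proof is correct, but it takes a different route from the paper's. The paper proves henselianity with the idempotent criterion. For a finite $A$-algebra $R$ it identifies $\mathrm{Idem}(R)$ and $\mathrm{Idem}(R/mR)$ with the idempotents of $R\otimes_A A^{\mathrm{sh}}$ and of $R\otimes_A A/m\otimes_A A^{\mathrm{sh}}$. This uses that these base changes of $\mathrm{Spec}(A^{\mathrm{sh}})\to\mathrm{Spec}(A)$ are homeomorphisms, so clopen subsets correspond, and then that $A^{\mathrm{sh}}$ is henselian. The residue field is handled afterwards by pointing back to Part 2 of the proof of \cref{prop:initalconditions}.

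You instead first get $\kappa(m)=\kappa(m)^{\mathrm{sep}}$ from universal injectivity (purely inseparable residue extensions) together with separability, so $A^{\mathrm{sh}}=A^{h}$. You then verify the étale-section criterion. For an étale neighbourhood $(B,q)$ with trivial residue extension, the faithfully flat map $B_q\to A^{h}$ shows that $A\to B_q$ is universally injective. Hence its open and closed diagonal is surjective, so $A\to B_q$ is a flat local epimorphism and therefore an isomorphism.

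Your argument needs only universal injectivity and no comparison of clopen sets. It does, however, rely essentially on $A\to A^{\mathrm{sh}}$ being ind-étale, so it is a hands-on version of the weakly étale universal homeomorphism argument (Stacks, Tag 0F6V) that the paper mentions after its proof. The paper's idempotent argument uses little beyond henselianity of $A^{\mathrm{sh}}$, which is why the paper can reuse it for the subsequent lemma on arbitrary universal homeomorphisms $A\to B$ with $B$ strictly henselian. A side benefit of your version is that the residue-field step is derived from $(\ast)$ itself, whereas Part 2 of the earlier proof is phrased in terms of the initial object.

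Two small remarks. First, $A\to B_q$ is not of finite type, so the clopen diagonal needs a justification. Since $B\to B_q$ is an epimorphism, the diagonal of $\mathrm{Spec}(B_q)$ over $\mathrm{Spec}(A)$ is a base change of the open and closed diagonal of $\mathrm{Spec}(B)$. Second, the step you flag as the main obstacle is not needed for your main route. If you want the direct version anyway: $A^{\mathrm{sh}}\otimes_A A^{\mathrm{sh}}\to A^{\mathrm{sh}}$ is flat (ind-étale implies weakly étale) and surjective on spectra by universal injectivity. Hence it is faithfully flat and thus injective.
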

\begin{proof}
Let $A\rightarrow A^{\mathrm{sh}}$ be the canonical morphism to the strict henselization of $A$.
We first show that condition $(\ast)$ in \cref{prop:initalconditions} implies that $A$ is henselian. 
Then the claim follows from the isomorphism of residue fields mentioned in the last sentence in the proof of \cref{prop:initalconditions}.
By \cite[\href{https://stacks.math.columbia.edu/tag/09XI}{Lemma 09XI}]{stacksproject}, henselian local rings can be characterized as follows: 
A local ring $A$ with a maximal ideal $m$ is henselian if and only if for every finite $A$-algebra $R$ the map $R\rightarrow R/mR$ induces a bijection on idempotent elements
\begin{align}\label{idempo}
\mathrm{Idem}(R)=\mathrm{Idem}(R/mR)\period
\end{align}
So let $A\rightarrow R$ be a finite morphism of rings.
Then, $R\otimes_A A^{\mathrm{sh}}$ is a finite $A^{\mathrm{sh}}$-module as finiteness is preserved under base change.
Since $A^{\mathrm{sh}}$ is henselian, we have by the characterization above a bijection
$$\mathrm{Idem}(R\otimes_A A^{\mathrm{sh}})=\mathrm{Idem}((R\otimes_A A^{\mathrm{sh}})/m_{\mathrm{sh}}(R\otimes_A A^{\mathrm{sh}}))=\mathrm{Idem}(R\otimes_A A^{\mathrm{sh}}/m_{\mathrm{sh}})\comma$$ where $m_{\mathrm{sh}}$ denotes the maximal ideal of $A^{\mathrm{sh}}$.
The maximal ideal $m_{\mathrm{sh}}$ can be identified with the ideal generated by the image of the maximal ideal $m \subset A$ under $A\rightarrow A^{\mathrm{sh}}$.
Thus, we have $A^{\mathrm{sh}}/m_{\mathrm{sh}}=A/m\otimes_A A^{\mathrm{sh}}$ and can further conclude 
$$\mathrm{Idem}(R\otimes_A A^{\mathrm{sh}})=\mathrm{Idem}(R\otimes_A A/m\otimes_A A^{\mathrm{sh}})$$ by replacing $A^{\mathrm{sh}}/m_{\mathrm{sh}}$.
So far, we did not make use of condition $(\ast)$.
This comes into play now. 
As $A\rightarrow A^{\mathrm{sh}}$ induces by condition $(\ast)$ a universal homeomorphism on prime spectra, also the canonical maps $R\rightarrow R\otimes_A A^{\mathrm{sh}}$ and $R\otimes_A A/m \rightarrow R\otimes_A A/m \otimes_A A^{\mathrm{sh}}$ induce homeomorphisms.
Recall that idempotents of a ring are in one to one correspondence with open and closed subsets of the prime spectrum \cite[\href{https://stacks.math.columbia.edu/tag/00EE}{Lemma 00EE}]{stacksproject}.
Using this identification, the previously mentioned homeomorphisms induce bijections 
\begin{align*}
 \mathrm{Idem}(R)&=\mathrm{Idem}(R\otimes_A A^{\mathrm{sh}}) \\
 \mathrm{Idem}(R/mR)&=\mathrm{Idem}(R\otimes_A A/m)=\mathrm{Idem}(R\otimes_A A/m \otimes_A A^{\mathrm{sh}})\period
\end{align*}
Combining these with the above shown bijections of idempotents leads to (\ref{idempo}) for every finite $A$-module $R$ and we can conclude that $A$ is henselian.
As the residue field of $A^{\mathrm{sh}}$ is not only separable but also purely inseparable over the residue field of $A$ (Part $2.$ in the proof of \cref{prop:localring_homeo}), we find an isomorphism of residue fields showing that $A$ coincides with its strict henselization.
\end{proof}
Note that \cite[\href{https://stacks.math.columbia.edu/tag/0F6V}{Lemma 0F6V}]{stacksproject} gives another proof for the result using that the morphism $\Spec(A^{\mathrm{sh}})\rightarrow \Spec(A)$ is a weakly \'etale universal homeomorphism.
In fact, the subsequent more general result would have been sufficient as well.
\begin{lemma}
Let $A \rightarrow B$ be a map of rings such that the induced morphism $$\Spec(B)\rightarrow \Spec(A)$$ is a universal homeomorphism of schemes.
Then, if $B$ is a strictly henselian local ring also $A$ is a strictly henselian local ring.
\end{lemma}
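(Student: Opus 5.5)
The statement to prove: if $\Spec(B)\to\Spec(A)$ is a universal homeomorphism and $B$ is strictly henselian local, then $A$ is strictly henselian local. The plan is to split this into three parts: $A$ is local, $A$ is henselian, and the residue field of $A$ is separably closed. Each part transports a property of $B$ back to $A$ along the universal homeomorphism.

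\textbf{Locality.} A homeomorphism of spectra carries the unique closed point of $\Spec(B)$ to a unique closed point of $\Spec(A)$. A ring whose spectrum has exactly one closed point is local, so $A$ is local with maximal ideal $m$. Moreover $m$ is the image of $m_B$.

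\textbf{Henselianity.} We run the idempotent argument of \cref{prop:localring_homeo} verbatim, with $A^{\mathrm{sh}}$ replaced by $B$. By \stacks{09XI}, it suffices to show that for every finite $A$-algebra $R$ the map $\mathrm{Idem}(R)\to\mathrm{Idem}(R/mR)$ is bijective. Idempotents correspond to clopen subsets of the spectrum (\stacks{00EE}). Since universal homeomorphisms are stable under base change, the maps $\Spec(R\otimes_A B)\to\Spec(R)$ and $\Spec(R\otimes_A B/mB)\to\Spec(R/mR)$ are homeomorphisms. They therefore give bijections $\mathrm{Idem}(R)\simeq\mathrm{Idem}(R\otimes_A B)$ and $\mathrm{Idem}(R/mR)\simeq\mathrm{Idem}(R\otimes_A B/mB)$.

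The one point needing care is that $mB$ need not equal $m_B$. However $\Spec(B/mB)\to\Spec(A/m)$ is again a universal homeomorphism, so $\Spec(B/mB)$ is a single point. Hence $mB\subset m_B$ is contained in the nilradical's preimage, i.e.\ $m_B=\sqrt{mB}$. Idempotents lift uniquely modulo nilpotent ideals; more simply, the spectra of $B/mB$ and $B/m_B$ agree, so the clopen sets agree. This gives
\[
\mathrm{Idem}(R\otimes_A B/mB)=\mathrm{Idem}(R\otimes_A B/m_B).
\]
Since $B$ is henselian and $R\otimes_A B$ is finite over $B$, we also have $\mathrm{Idem}(R\otimes_A B)\simeq\mathrm{Idem}(R\otimes_A B/m_B)$. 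The square formed by these bijections commutes because all maps are the natural ones. Chasing it gives $\mathrm{Idem}(R)\simeq\mathrm{Idem}(R/mR)$, so $A$ is henselian.

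\textbf{Separably closed residue field.} A universal homeomorphism has purely inseparable residue field extensions; this is the standard characterization of universally injective morphisms, cf.\ \cite[Proposition 4.35]{AG1}. So $\kappa_B:=B/m_B$ is purely inseparable over $\kappa_A:=A/m$. Let $L/\kappa_A$ be a finite separable extension. Then $L\otimes_{\kappa_A}\kappa_B$ is a field, because a separable extension and a purely inseparable extension are linearly disjoint. This field is finite separable over $\kappa_B$, which is separably closed, so it equals $\kappa_B$. Comparing degrees gives $[L:\kappa_A]=1$. Hence $\kappa_A$ is separably closed, and together with the previous two parts $A$ is strictly henselian.

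The step I expect to be the main obstacle is the $mB$ versus $m_B$ subtlety in the henselian part, which is not needed in \cref{prop:localring_homeo}. It is handled by the reduction-invariance of idempotents via clopen subsets, as above. Alternatively, one can note that \stacks{0F6V} handles this situation directly.
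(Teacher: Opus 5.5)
Your proof is correct and follows the same three steps as the paper. Locality comes from the homeomorphism. Henselianity comes from the idempotent criterion, transported along base changes of the universal homeomorphism as in \cref{prop:localring_homeo}. Separable closedness of $\kappa_A$ comes from $\kappa_B/\kappa_A$ being purely inseparable; your linear-disjointness argument is equivalent to the paper's remark that $\kappa_B$ contains a separable closure of $\kappa_A$ but no proper separable subextension.

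Your extra step $m_B=\sqrt{mB}$, handled by nil-invariance of idempotents, covers a point the paper passes over when it says ``argue as in the proof of the last lemma''. That earlier proof had $mA^{\mathrm{sh}}=m_{\mathrm{sh}}$ exactly, which need not hold for a general $B$.
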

\begin{proof}
First note that $A$ automatically is a local ring due to the homeomorphism.
For the henselian property, we can argue as in the proof of the last lemma via the characterization of henselity by idempotents.\footnote{Actually, the argument also works if we change the roles of $A$ and $B$.}
It remains to prove that the residue field $\kappa_A$ of $A$ is separably closed. Under the assumption, the separably closed residue field $\kappa_B$ is a purely inseparable field extension of the residue field $\kappa_A$ \cite[Proposition 4.35]{AG1} and needs to contain the separable closure of $\kappa_A$.
By pure inseparability, there cannot exist a proper intermediate separable extension \cite[\href{https://stacks.math.columbia.edu/tag/030K}{Lemma 030K}]{stacksproject}.
Hence, it follows that $\kappa_A$ is already separably closed.
\end{proof}
We have seen a classification of schemes whose condensed homotopy type is trivial.
We conclude with a remark on universal homeomorphisms and trivial condensed homotopy types.
\begin{remark}
Invariance under universal homeomorphisms of the (pro-)\'etale topos of a scheme $X$ (see, e.g., \cite[Lemma 5.4.2]{BhattScholzeProetale}) implies the same for the Galois category $\Gal(X)$.
Hence, the class of schemes for which we showed triviality of the condensed homotopy type in this section does not enlarge under the formation of universal homeomorphisms.
\end{remark}


\DeclareFieldFormat{labelalphawidth}{#1}
\DeclareFieldFormat{shorthandwidth}{#1}
\printbibliography[heading=references]

@misc{reconstruction_etale_topoi,
      title={Reconstruction of schemes from their \'{e}tale topoi}, 
      author={Magnus Carlson and Peter J. Haine and Sebastian Wolf},
      year={2024},
      eprint={2407.19920},
      archivePrefix={arXiv},
      primaryClass={math.AG},
      url={https://arxiv.org/abs/2407.19920}, 
}

@book{classfieldtheory,
language = {eng},
publisher = {Springer International Publishing},
series = {Universitext},
title = {Galois Cohomology and Class Field Theory},
url = {https://doi.org/10.1007/978-3-030-43901-9},
edition = {1st ed. 2020},
isbn = {9783030439019},
author = {Harari, David},
year = {2020},
}

@book {Localfields,
    AUTHOR = {Serre. Jean-Pierre},
     TITLE = {Local fields},
    SERIES = {Graduate Texts in Mathematics},
 PUBLISHER = {Springer-Verlag, New York},
      YEAR = {1980},
     PAGES = {241},
      ISBN = {978-0-387-90424-5},
}

@book{valuedfields,
publisher = {Springer},
series = {Springer monographs in mathematics},
title = {Valued fields},
url = {https://digitale-objekte.hbz-nrw.de/storage2/2018/02/01/file_51/7512298.pdf},
address = {Berlin [u.a},
isbn = {354024221X},
author = {Engler, Antonio J. and Prestel, Alexander},
year = {2005},
}

@book{Algebraicnumbertheory,
language = {eng},
publisher = {Springer},
series = {Grundlehren der mathematischen Wissenschaften 322},
title = {Algebraic number theory},
address = {Berlin, Heidelberg, New York},
isbn = {3540653996},
author = {Neukirch, Jürgen},
year = {1999},
}

@article {MR4574234,
       IDS = {arXiv:2012.10502,zbMATH07671238},
    AUTHOR = {Wolf, Sebastian},
     TITLE = {The pro-étale topos as a category of pyknotic presheaves},
   JOURNAL = {Doc. Math.},
  FJOURNAL = {Documenta Mathematica},
    VOLUME = {27},
      YEAR = {2022},
     PAGES = {2067--2106},
      ISSN = {1431-0635},
   MRCLASS = {18F10 (14F20)},
  MRNUMBER = {4574234},
      NOTE = {\href{https://arxiv.org/abs/2012.10502}{\nolinkurl{arXiv:2012.10502}}},
}

@unpublished{arXiv:2012.10502,
    Author = {Mair, Catrin},
    Month = {May},
    Note = {\href{https://arxiv.org/abs/2012.10502}{\nolinkurl{arXiv:2012.10502}}},
    Title = {Animated Condensed Sets and Their Homotopy Groups},
    Year = {2021}, 
}

@unpublished{Classifyinganima,
      title={Classifying anima of condensed $\infty$-categories of points}, 
      author={Peter J. Haine},
      year={2026},
      eprint={2602.21330},
      archivePrefix={arXiv},
      primaryClass={math.CT},
      url={https://arxiv.org/abs/2602.21330}, 
}

@article{GABBER20154667,
title = {Points in algebraic geometry},
journal = {Journal of Pure and Applied Algebra},
volume = {219},
number = {10},
pages = {4667-4680},
year = {2015},
issn = {0022-4049},
doi = {https://doi.org/10.1016/j.jpaa.2015.03.001},
url = {https://www.sciencedirect.com/science/article/pii/S0022404915000730},
author = {Ofer Gabber and Shane Kelly},
}

@article {MR4609461,
       IDS = {hemo2023constructible_sheaves_schemes},
    AUTHOR = {Hemo, Tamir and Richarz, Timo and Scholbach, Jakob},
     TITLE = {Constructible sheaves on schemes},
   JOURNAL = {Adv. Math.},
  FJOURNAL = {Advances in Mathematics},
    VOLUME = {429},
      YEAR = {2023},
     PAGES = {Paper No. 109179, 46},
      ISSN = {0001-8708,1090-2082},
   MRCLASS = {14F08 (14E20 14F06 14F20)},
  MRNUMBER = {4609461},
MRREVIEWER = {Damian\ M.\ Maingi},
       DOI = {10.1016/j.aim.2023.109179},
       URL = {https://doi.org/10.1016/j.aim.2023.109179},
      NOTE = {\href{https://arxiv.org/abs/2305.18131}{\nolinkurl{arXiv:2305.18131}}},
}

@book {MR1300636,
    AUTHOR = {Mac Lane, Saunders and Moerdijk, Ieke},
     TITLE = {Sheaves in geometry and logic},
    SERIES = {Universitext},
      NOTE = {A first introduction to topos theory,
              Corrected reprint of the 1992 edition},
 PUBLISHER = {Springer-Verlag, New York},
      YEAR = {1994},
     PAGES = {xii+629},
      ISBN = {0-387-97710-4},
   MRCLASS = {03G30 (18B25 54B40)},
  MRNUMBER = {1300636},
MRREVIEWER = {M. Makkai},
}

@unpublished{pyknoticI,
  Author = {Barwick, Clark and Haine, Peter J.},
  Month = {April},
  Note = {\href{https://arxiv.org/abs/1904.09966}{\nolinkurl{arXiv:1904.09966}}},
  Title = {Pyknotic objects, {I}. {B}asic notions},
  Year = {2019}}

@unpublished{Scholze:analyticnotes,
    Author = {Scholze, Peter},
    Month = {October},
    Note = {Lecture notes available at \href{https://www.math.uni-bonn.de/people/scholze/Analytic.pdf}{\nolinkurl{math.uni-bonn.de/people/scholze/Analytic.pdf}}},
    Title = {Lectures on Analytic Geometry},
    Year = {2019}}

@unpublished{Scholze:condensednotes,
	Author = {Scholze, Peter},
	Month = {April},
	Note = {Lecture notes available at \href{http://www.math.uni-bonn.de/people/scholze/Condensed.pdf}{\nolinkurl{math.uni-bonn.de/people/scholze/Condensed.pdf}}},
	Title = {Lectures on Condensed Mathematics},
	Year = {2019}}

@unpublished{Haine:1-localic,
	Author = {Haine, Peter J.},
	Month = {April},
	Note = {\href{https://arxiv.org/abs/1904.01877}{\nolinkurl{arXiv:1904.01877}}},
	Title = {On coherent topoi \& coherent {$1$}-localic {$\infty$}-topoi},
	Year = {2019}}

@misc{Kerodon,
	Author = {Lurie, Jacob},
	Howpublished = {\href{https://kerodon.net}{\nolinkurl{kerodon.net}}},
	Shorthand = {Ker},
	Title = {Kerodon},
	Year = {2025}}

@unpublished{Ultracategories,
	Author = {Lurie, Jacob},
	Note = {\href{http://www.math.ias.edu/~lurie/papers/Conceptual.pdf}{\nolinkurl{math.ias.edu/~lurie/papers/Conceptual.pdf}}},
	Title = {Ultracategories},
	Year = {2018}}

@article {MR3248993,
    AUTHOR = {Holschbach, Armin and Schmidt, Johannes and Stix, Jakob},
     TITLE = {Étale contractible varieties in positive characteristic},
   JOURNAL = {Algebra Number Theory},
  FJOURNAL = {Algebra \& Number Theory},
    VOLUME = {8},
      YEAR = {2014},
    NUMBER = {4},
     PAGES = {1037--1044},
      ISSN = {1937-0652,1944-7833},
   MRCLASS = {14F35},
  MRNUMBER = {3248993},
MRREVIEWER = {Junwu\ Tu},
       DOI = {10.2140/ant.2014.8.1037},
       URL = {https://doi.org/10.2140/ant.2014.8.1037},
      NOTE = {\href{https://arxiv.org/abs/1310.2784}{\nolinkurl{arXiv:1310.2784}}},
}

@book {MR245577,
       IDS = {MR0245577},
    AUTHOR = {Artin, M. and Mazur, B.},
     TITLE = {Étale homotopy},
    SERIES = {Lecture Notes in Mathematics},
    VOLUME = {No. 100},
 PUBLISHER = {Springer-Verlag, Berlin-New York},
      YEAR = {1969},
     PAGES = {iii+169},
   MRCLASS = {14.55 (18.00)},
  MRNUMBER = {245577},
MRREVIEWER = {James\ Milne},
}

@book {MR676809,
    AUTHOR = {Friedlander, Eric M.},
     TITLE = {Étale homotopy of simplicial schemes},
    SERIES = {Annals of Mathematics Studies},
    VOLUME = {No. 104},
 PUBLISHER = {Princeton University Press, Princeton, NJ; University of Tokyo
              Press, Tokyo},
      YEAR = {1982},
     PAGES = {vii+190},
      ISBN = {0-691-08288-X},
   MRCLASS = {55P99 (14F35 55-02)},
  MRNUMBER = {676809},
MRREVIEWER = {V.\ P.\ Snaith},
}

@book{HTT,
	Address = {Princeton, NJ},
	Author = {Lurie, Jacob},
	Mrclass = {18-02 (18B25 18E35 18G30 18G55 55U40)},
	Mrnumber = {2522659 (2010j:18001)},
	Mrreviewer = {Mark Hovey},
	Pages = {xviii+925},
	Publisher = {Princeton University Press},
	Series = {Annals of Mathematics Studies},
	Shorthand = {HTT},
	Title = {Higher topos theory},
	Volume = {170},
	Year = {2009}}

@unpublished{SAG,
	Author = {Lurie, Jacob},
	Note = {\href{http://www.math.ias.edu/~lurie/papers/SAG-rootfile.pdf}{\nolinkurl{math.ias.edu/~lurie/papers/SAG-rootfile.pdf}}},
	Shorthand = {SAG},
	Title = {Spectral Algebraic Geometry},
	Year = {2018}}

@misc{stacksproject,
       IDS = {stacks-project},
	Author = {{The Stacks Project Authors}},
	Howpublished = {\href{http://stacks.math.columbia.edu}{\nolinkurl{stacks.math.columbia.edu}}},
	Shorthand = {STK},
	Title = {Stacks Project},
	Year = {2025},
}

@book{SGA4ii,
	Address = {Berlin},
	Mrclass = {14-06},
	Mrnumber = {50 \#7131},
	Pages = {iv+\xspace418},
	Publisher = {Springer-Verlag},
	Series = {Séminaire de Géométrie Algébrique du Bois Marie 1963--64 (SGA 4). Dirigé par M. Artin, A. Grothendieck, J.-L. Verdier. Avec la collaboration de N. Bourbaki, P. Deligne, B. Saint--Donat. Lecture Notes in Mathematics, Vol. 270},
	Shorthand = {SGA 4\textsubscript{\textsc{ii}}},
	Title = {Théorie des topos et cohomologie étale des schémas. {T}ome 2},
	Year = {1963--64}}

@article {MR3379634,
       IDS = {BhattScholzeProEtale,BhattScholzeProetale},
    AUTHOR = {Bhatt, Bhargav and Scholze, Peter},
     TITLE = {The pro-étale topology for schemes},
   JOURNAL = {Astérisque},
  FJOURNAL = {Astérisque},
    NUMBER = {369},
      YEAR = {2015},
     PAGES = {99--201},
      ISSN = {0303-1179,2492-5926},
      ISBN = {978-2-85629-805-3},
   MRCLASS = {14F05 (14F20 14F35 14H30 18B25)},
  MRNUMBER = {3379634},
MRREVIEWER = {Pieter\ Belmans},
      NOTE = {\href{https://arxiv.org/abs/1309.1198}{\nolinkurl{arXiv:1309.1198}}},
}

@article {HoyoisGalois,
    AUTHOR = {Hoyois, Marc},
     TITLE = {Higher {G}alois theory},
   JOURNAL = {J. Pure Appl. Algebra},
  FJOURNAL = {Journal of Pure and Applied Algebra},
    VOLUME = {222},
      YEAR = {2018},
    NUMBER = {7},
     PAGES = {1859--1877},
      ISSN = {0022-4049,1873-1376},
   MRCLASS = {18B25 (54C56 55P55)},
  MRNUMBER = {3763287},
MRREVIEWER = {Martin\ Szyld},
       DOI = {10.1016/j.jpaa.2017.08.010},
       URL = {https://doi.org/10.1016/j.jpaa.2017.08.010},
      NOTE = {\href{https://arxiv.org/abs/1506.07155}{\nolinkurl{arXiv:1506.07155}}},
}

@Book{Johnstone1982StoneSpaces,
  author    = {Peter T. Johnstone},
  publisher = {Cambridge University Press},
  title     = {Stone spaces},
  year      = {1982},
  address   = {Cambridge; New York},
  number    = {3},
  series    = {Cambridge studies in advanced mathematics},
}

@book{MR1953060,
	Author = {Johnstone, P. T.},
	Isbn = {0-19-853425-6},
	Mrclass = {18B25 (18-02)},
	Mrnumber = {1953060 (2003k:18005)},
	Mrreviewer = {Colin McLarty},
	Pages = {xxii+468+71},
	Publisher = {The Clarendon Press, Oxford University Press, New York},
	Series = {Oxford Logic Guides},
	Title = {Sketches of an elephant: a topos theory compendium. {V}ol. 1},
	Volume = {43},
	Year = {2002}}

@article {MR3518559,
    AUTHOR = {Glasman, Saul},
     TITLE = {A spectrum-level {H}odge filtration on topological
              {H}ochschild homology},
   JOURNAL = {Selecta Math. (N.S.)},
  FJOURNAL = {Selecta Mathematica. New Series},
    VOLUME = {22},
      YEAR = {2016},
    NUMBER = {3},
     PAGES = {1583--1612},
      ISSN = {1022-1824},
   MRCLASS = {18F25 (16E40 55P43)},
  MRNUMBER = {3518559},
MRREVIEWER = {Markus Szymik},
       DOI = {10.1007/s00029-016-0228-z},
       URL = {https://doi.org/10.1007/s00029-016-0228-z},
      NOTE = {\href{https://arxiv.org/abs/1408.3065}{\nolinkurl{arXiv:1408.3065}}},
}

@article {MR3649361,
    AUTHOR = {Schröer, Stefan},
     TITLE = {Geometry on totally separably closed schemes},
   JOURNAL = {Algebra Number Theory},
  FJOURNAL = {Algebra \& Number Theory},
    VOLUME = {11},
      YEAR = {2017},
    NUMBER = {3},
     PAGES = {537--582},
      ISSN = {1937-0652,1944-7833},
   MRCLASS = {14F20 (13B22 13J15 14E05)},
  MRNUMBER = {3649361},
MRREVIEWER = {Christian\ Liedtke},
       DOI = {10.2140/ant.2017.11.537},
       URL = {https://doi.org/10.2140/ant.2017.11.537},
      NOTE = {\href{https://arxiv.org/abs/1503.02891}{\nolinkurl{arXiv:1503.02891}}},
}

@unpublished{Exodromy,
    IDS = {exodromy},
	Author = {Barwick, Clark and Glasman, Saul and Haine, Peter J.},
	Note = {\href{https://arxiv.org/abs/1807.03281v7}{\nolinkurl{arXiv:1807.03281v7}}},
	Title = {Exodromy},
	Year = {2020}, 
}

@unpublished{CatrinsThesis,
	Author = {Mair, Catrin},
    Note = {PhD Thesis available at \href{https://tuprints.ulb.tu-darmstadt.de/29744/}{\nolinkurl{https://tuprints.ulb.tu-darmstadt.de/29744/}}},
	Title = {The Role of Condensed Mathematics in Homotopy Theory},
    Month ={April},
	Year = {2025},
    }

@misc{The_condensed_homotopy_type,
      title={The condensed homotopy type of a scheme}, 
      author={Peter J. Haine and Tim Holzschuh and Marcin Lara and Catrin Mair and Louis Martini and Sebastian Wolf},
      year={2025},
      eprint={2510.07443},
      archivePrefix={arXiv},
      primaryClass={math.AG},
      url={https://arxiv.org/abs/2510.07443}, 
}


\bigskip

\noindent \textsc{Catrin Mair, Universität Münster, Einsteinstrasse 62, 48149 Münster, Germany}
\medskip

\end{document}